\newcommand\oo{
  \mathchoice
    {{\scriptstyle\mathcal{O}}}
    {{\scriptstyle\mathcal{O}}}
    {{\scriptscriptstyle\mathcal{O}}}
    {\scalebox{.7}{$\scriptscriptstyle\mathcal{O}$}}
  }
\DeclareMathOperator{\supp}{supp}
\DeclareMathOperator{\Lpre}{L_{pre}}
\DeclareMathOperator{\Lu}{L_{u}}
\DeclareMathOperator{\conv}{conv}
\DeclareMathOperator{\sign}{sign}
\DeclareMathOperator{\Span}{span}
\DeclareMathOperator{\Dom}{Dom}
\DeclareMathOperator{\card}{card}
\def\O{\mathcal{O}}
\newtheorem{theorem}{Theorem}
\newtheorem{definition}[theorem]{Definition}
\newtheorem{lemma}[theorem]{Lemma}
\newtheorem{proposition}[theorem]{Proposition}
\newtheorem{corollary}[theorem]{Corollary}
\newtheorem{example}{Example}
\newtheorem{remark}{Remark}
\title{\LARGE\textbf{The precontraction group of the field of logarithmic transseries $\mathbb{T}_{\log}$}}
\author{ \textbf{Jos\'e Leonardo \'Angel} }
\date{}
\begin{document}
\fancyhead{}
\fancyhead[LE, RO]{\thepage}
\fancyfoot{}
\renewcommand{\headrulewidth}{0.9pt}
\maketitle

\begin{abstract} \noindent
As a first step to understand the theory of the structure $\mathbb{T}_{\log}$ of logarithmic transseries as an ordered valued logarithmic field, we focus on the map $\chi$ induced by the logarithm of $\mathbb{T}_{\log}$ in its value group $\Gamma_{\log}$ and study the theory of the precontraction group $(\Gamma_{\log},\chi)$. Particularly, we show that this theory is model complete and complete, and we characterize all definable subsets of the discrete set $\chi(\Gamma_{\log})$. 

\end{abstract}

\quad\textbf{Key words}: Precontraction group, centripetal, logarithmic transseries.

\section{Introduction}

In \cite{Kuh4,Kuh6}, Franz-Victor Kuhlmann and Salma Kuhlmann showed that in a non-archimedean exponential field the exponential induces a map, called \textit{contraction}, on the value group of the field with respect to its natural valuation. Specifically, if $\log$ denotes the inverse of the exponential map and $v$ the natural valuation of the ordered field, then for $a>0$ and $v(a)<0$ they defined the contraction map $\chi$ as $\chi(v(a))=v(\log(a))$, $\chi(-v(a))=-\chi(v(a))$ and $\chi(0)=0$. Under this definition, the autors studied in \cite{Kuh2, Kuh4} the first order theory of the value group of an exponential field endowed with such contraction map and showed that this theory is complete, decidable, admits quantifier elimination and is weakly o-minimal. 

We recall that an exponential field is an ordered field equipped with an order preserving group isomorphism from the additive group of the field onto the multiplicative group of positive elements. The transseries field $\mathbb{T}$ is an important non-archimedean exponential field introduced by \'Ecalle in \cite{Ecalle} and by Dahn and Göring in \cite{Danh}, and widely studied as a valued diferential field in \cite{Van6} by Matthias Aschenbrenner, Lou van den Dries and Joris van der Hoeven. Particularly, the last authors show that the contraction map associated to the exponential map of $\mathbb{T}$ is definable in the asymptotic couple of $\mathbb{T}$, that is the structure of the value group of $\mathbb{T}$ endowed with a function induced by the diferential map. In similar way, in \cite{Ger1} Allen Geheret shows that for the valued diferential field of logarithmic transseries $\mathbb{T}_{\log}$, a special sufield of $\mathbb{T}$ defined in \cite{Van6} and whose elements, informally speaking, are formal series which do not involve exponentiation, there is a precontraction map (i.e a non-surjective contraction map) definable in the asymptotic couple of $\mathbb{T}_{\log}$.

Following the classical strategy used in model theory to study the theory of a valued field by first studying the theory of its value group and of its residual field, as a first step to understand the theory of $\mathbb{T}_{\log}$ as an ordered valued logarithmic field, i.e an ordered valued field equipped with an order preserving group morphism from the multiplicative group of positive elements of the field in the additive group. We study in this paper  the model theory of its associated precontraction group, that is the structure given by its value group $\Gamma_{\log}$ endowed with a function $\chi$ induced by the logarithm map.

Base on the ideas used in \cite{Kuh2, Kuh4} to study the theory of the contraction groups and those used in \cite{Ger1} to study the theory of the asymptotic couple of $\mathbb{T}_{\log}$, we study the first order theory of the couple $(\Gamma_{\log},\chi)$ as a precontraction group. We notice that although the map $\chi$ is not surjective, the image of $\Gamma_{\log}^{<0}$ by $\chi$ is a discrete set cofinal in $\Gamma_{\log}^{<0}$ and using this fact we prove that the theory of the precontraction group  $(\Gamma_{\log},\chi)$ is model complete and complete and we study the definable subsets of the image of $\Gamma_{\log}^{<0}$ by $\chi$.

The structure of the paper is as follows. In section 2, we recall some preliminary notions and notations about ordered abelian groups and valued abelian groups and we present a short description of $\mathbb{T}_{\log}$ and its value group. In section 3, we include some definitions and results about precontraction groups. In section 4, we define the language $L_{pdg}$, of ordered groups together a symbol function for the contraction  map and a constant symbol, and study the $L_{pdg}$-theory $T_{pdg}$ of centripetal precontraction discrete groups. Particularly, we prove that the theory $T_{pdg}$ is model complete and complete. Next, we expand the language $L_{pdg}$ to ensure that the natural expansion of the theory $T_{pdg}$ has quantifier elimination and use it to characterize all definable subsets of the image of the group by the precontraction map. Finally, we study the simple extensions of models of $T_{pdg}$.

For the general notions and facts about model theory, we refer the reader to \cite{Chang, Hod} or \cite[Appendix B]{Van6}. 


\section{Preliminaries}
Throughout, $m$ and $n$ range over $\mathbb{N}=\lbrace 0,1,2, ...\rbrace$, the set of natural numbers.
\subsubsection*{Ordered sets}

By an ordered set $S$ we mean a set $S$ equipped with a distinguished total order relation $\leq$. If $B$ is a subset of $S$ we see $B$ as an ordered subset of $S$ ordered by the induced ordering and we define the set 
\[S^{\geq B}=\lbrace a\in S: a\geq b \text{ for all }b\in B\rbrace.\]
In similar way we define $S^{>B}$, $S^{\leq B}$ and $S^{<B}$. Particularly, if $B=\lbrace b\rbrace$, then we set $S^{\geq b}=S^{\geq B}$.

We say that a subset $B$ of $S$ is convex in $S$ if for all $a,c\in B$ and $b\in S$ such that $a\leq b\leq c$ we have $b\in B$, and we define the \textit{convex hull }of $B$ in $S$ as 
\[\conv(B)=\lbrace b\in S: a\leq b\leq c \text{ for some } a,b\in B\rbrace.\]
Moreover, we say that $B\subseteq S$ is a \textit{lower cut} in $S$ if for all $b\in B$ and $a\in A$, $a<b$ implies $a\in B$.

Finally, we define intervals in $S$ as usual and for $\infty\notin S$, we define the set $S_{\infty}=S\cup \lbrace\infty \rbrace$ and extend the order of $S$ to $S_{\infty}$ setting $a<\infty$ for all $a\in S$.

\subsubsection*{Ordered abelian groups} 
An ordered abelian group $\Gamma$, written additively, is an abelian group with an ordering such that for all $a,b,c\in \Gamma$ if $a<b$ then $a+c<b+c$. 
For $a\in \Gamma$ we set $|a|=\max\lbrace a, -a\rbrace$ and define the \textit{archimedean class} of $a$ in $\Gamma$ as 
\[[a]=\lbrace b\in \Gamma: |a|\leq n|b| \text{ and } |b|\leq n|a| \text{ for some } n\geq 1\rbrace.\]
Thus, we say that $a$ is \textit{archimedean equivalent} to $b$ in $\Gamma$ if $b\in [a]$. Moreover, the set $[\Gamma]$ of all archimedean classes become in an ordered set putting
\[[a]\leq [b] \Leftrightarrow |a|\leq n|b| \text{ for some } n\geq 1.\]
Moreover, we have 
\[[a]< [b] \Leftrightarrow n|a|\leq |b| \text{ for all } n\geq 1.\]
\subsubsection*{Valued abelian groups}
Let $\Gamma$ be an abelian group and $S$ be an ordered set. A \textit{valuation}  on $\Gamma$ is a surjective map $v:\Gamma\rightarrow S_{\infty}$ such that for all $a, b\in \Gamma$ the following conditions are satisfied:
\begin{enumerate}
\item [$(1)$] $v(a)=\infty \Leftrightarrow a=0$.
\item [$(2)$] $v(-a)=v(a)$.
\item [$(3)$] $v(a+b)\geq \min\lbrace v(a), v(b)\rbrace$.
\end{enumerate}

A \textit{valued abelian group} is a structure conformed by an abelian group $\Gamma$, and ordered set $S$ and a valuation $v$ on $\Gamma$.

For example, for an ordered abelian group $\Gamma$ if we put $S=[\Gamma]$ and equip $S$ with the reversed ordering of $[\Gamma]$, then the map $v:\Gamma\rightarrow S$ defined as $v(a)=[a]$ is a valuation on $\Gamma$. We call this valuation the \textit{natural valuation} of $\Gamma$.

\subsubsection*{The field of logarithmic transseries $\mathbb{T}_{\log}$}

The field $\mathbb{T}_{\log}$ of logarithmic transseries is a special subfield of the field $\mathbb{T}$ of transseries (see \cite{Van6} for a definition of $\mathbb{T}$), in which each element is a formal series with real coeficients and monomials of the form $\ell_0^{r_0}\ell_1^{r_1}\cdots\ell_n^{r_n}$, with $\ell_0=x$, $\ell_{n+1}=\log (\ell_n)$ for $n>0$ and $r_0,...,r_n\in \mathbb{R}$. 

Formally, we can construct $\mathbb{T}_{\log}$ as follows: First, for each $n$ we set $\mathcal{L}_n$ as the formal multiplicative group given by
\[\mathcal{L}_n=\lbrace \ell_0^{r_0}\ell_1^{r_1}\cdots\ell_n^{r_n}: r_0,r_1,...,r_n \in \mathbb{R}\rbrace,\]
and ordered by the relation $\ell_0^{r_0}\ell_1^{r_1}\cdots\ell_n^{r_n}>1$ if and only if the exponents $r_0,r_1,...,r_n$ are not all zero, and $r_i>0$ for the least $i$ with $r_i\neq 0$. 

Next, for each $n$, we define the Hahn field $\mathbb{R}[[\mathcal{L}_n]]$ of well based series with real coefficients and monomials in $\mathcal{L}_n$.  We mean the field of all functions $f:\mathcal{L}_n\rightarrow \mathbb{R}$ (written as formal sums $f=\sum\limits_{m\in \mathcal{L}_n} f_m m$) such that $\supp(f):=\lbrace m \in \mathcal{L}_n:f_m\neq 0 \rbrace$ has no strictly increasing infinite sequences.

Finally, since $\mathcal{L}_m$ is an ordered subgroup of $\mathcal{L}_n$ for $m\leq n$, the ordered group inclusions 
\[\mathcal{L}_0\subseteq \mathcal{L}_1\subseteq...\subseteq \mathcal{L}=\bigcup\limits_{n}\mathcal{L}_n,\]
induce field inclusions
\[\mathbb{R}[[x^{\mathbb{R}}]]=\mathbb{R}[[\mathcal{L}_0]]\subseteq \mathbb{R}[[\mathcal{L}_1]]\subseteq...\]
and we define
\[\mathbb{T}_{\log}:=\bigcup\limits_{n}\mathbb{R}[[\mathcal{L}_n]].\]
\noindent
It follows that $\mathbb{T}_{\log}$ is an ordered subfield of $\mathbb{T}$ and $\mathbb{R}[[\mathcal{L}]]\cap \mathbb{T}=\mathbb{T}_{\log}$. Moreover, as each group $\mathcal{L}_n$ is divisible, the fields $\mathbb{R}[[\mathcal{L}_n]]$ and $\mathbb{T}_{\log}$ are real closed.\\

\noindent
Let $\Gamma_{\log}$ be the ordered $\mathbb{R}$-vector space $\bigoplus\limits_{n>0}\mathbb{R}\ell_n$, where $\alpha=\sum\limits_{i=0}^nr_i\ell_{i+1}>0$ if $r_k>0$ for the least $k$ in $\lbrace0,1,...,n\rbrace$ such that $r_k\neq 0$. We define a convex valuation $v$ of $\mathbb{T}_{\log}$ as the unique map $$v:\mathbb{T}_{\log}\rightarrow \Gamma_{\log}\cup \lbrace \infty\rbrace$$  
\noindent
such that 
\begin{enumerate}
\item [$(1)$] $v(\ell_0^{r_0}\ell_1^{r_1}\cdots\ell_n^{r_n})=-r_0\ell_1-r_1\ell_2-\cdots-r_n\ell_{n+1}$,
\item [$(2)$] $v(f)=v(\frak{d}(f))$ for all $f\in \mathbb{T}_{log}^{\neq 0}$, where $\mathfrak{d}(f):=\max(\supp(f))$ is the dominant monomial of $f$. 
\item  [$(3)$] $v(0)=\infty$.
\end{enumerate}
Thus, $\mathbb{T}_{\log}$ becomes an ordered valued field with valuation ring  $\O_{\log}=\mathbb{R}\oplus \oo_{\log}$, maximal ideal 
\[\oo_{\log}=\lbrace f\in \mathbb{T}_{log}:v(f)>0\rbrace,\]
value group $\Gamma_{\log}$, and residue field $\mathbb{R}$.

Now, since each positive element $f\in \mathbb{T}_{\log}$ can be decomposed as $f=\frak{d}(f)\cdot f_{\frak{d}(f)}\cdot (1+\epsilon)$ where $\frak{d}(f)\in \mathcal{L}$, $f_{\frak{d}(f)}\in \mathbb{R}^{>0}$ is the leading coefficient of $f$ and $\epsilon\in \oo_{\log}$ (see \cite{Kuh4}), we may define the logarithm of $f$ as 
\[\log(f)=\Lpre(\frak{d}(f))+\log_{\mathbb{R}}(f_{\frak{d}(f)})+\Lu(1+\epsilon),\]
where $\log_{\mathbb{R}}$ is the logarithm in $\mathbb{R}$, $\Lu$ is the logarithm on 1-units given by
\[\Lu(1+\epsilon)=\sum\limits_{i>0}(-1)^{i+1}\dfrac{\epsilon^i}{i}\]
and $\Lpre$ is the logarithmic section defined as $\Lpre(\ell_0^{r_0}\ell_1^{r_1}\cdots\ell_n^{r_n})=r_0\ell_1+\cdots+r_n\ell_{n+1}$.

Under this definition we see that the map $\log$ is an ordered embedding from the multiplicative group $\mathbb{T}_{\log}^{>0}$ into the additive group $\mathbb{T}_{log}$, such that
\[\log(\mathbb{T}_{\log}^{>0})=\Gamma_{\log}\oplus \mathbb{R}\oplus \oo_{\log}\]
is an $\mathbb{R}$-vector subspace of $\mathbb{T}_{\log}$. 

Additionally, the valuation and the logarithm are related by the following property, which is known as \textit{Growth Axiom}(see \cite{Kuh6}): for all $f\in \mathbb{T}_{\log}^{>0}$ with $v(f)<0$ we have that $v(\log(f))>v(f)$, which implies 

$$f>\log(f^n)=n\log(f) \text{ for all } n\in \mathbb{N}.$$

\noindent
Moreover, the map $\log$ induce an extra structure in the value group $\Gamma_{\log}$ given by the map $$\chi:\Gamma_{\log}\rightarrow \Gamma_{\log}$$ 
\noindent
defined as
\[\chi(\alpha)=
\begin{cases}
\chi'(\alpha), & \text{if $\alpha<0$},\\
0, & \text{if $\alpha=0$},\\
-\chi'(-\alpha), & \text{if $\alpha>0$.}\\
\end{cases}\]
where $\chi':\Gamma_{\log}^{<0}\rightarrow \Gamma_{\log}^{<0}$ is given by $\chi'(\alpha)=v(\log(f))$ with $f\in \mathbb{T}_{\log}^{>0}$, and $\alpha=v(f)<0$. We see that $\chi$ is well defined, since for $f,g\in \mathbb{T}_{\log}^{>0}$ with $v(f)=v(g)<0$ there is a positive unit $h$ in $\O_{\log} $such that $f=gh$. Thus, $\log(f)=\log(g)+\log(h)$ and 
\[v(\log(f))\geq \min\lbrace v(\log(g)),v(\log(h))\rbrace.\]
By definition of $\chi$ we have $v(\log(h))\geq 0$ and $v(\log(g))<0$, and then 
\[v(\log(f))= \min\lbrace v(\log(g)),v(\log(h))\rbrace=v(\log(g)).\]

\section{Precontraction groups}

The notion of contraction map was used in \cite{Kuh4} to study the structure of the value group of an exponential field and the theory of contraction groups was studied in \cite{Kuh2,Kuh3}. We list here some useful definitions and results of those papers. Specifically:\\

\begin{definition}
Given a totally ordered abelian group $\Gamma$ and a map $\chi:\Gamma \rightarrow \Gamma$, the pair $(\Gamma,\chi)$ is called a \textit{precontraction group} and $\chi$ is called a \textit{precontraction map} if it satisfies for all $a, b\in \Gamma$ the following axioms:

\begin{enumerate}
\item [$(1)$] $\chi(a)=0 \leftrightarrow a=0$,
\item [$(2)$] $a\leq b \rightarrow \chi(a)\leq \chi(b)$,
\item [$(3)$] $ \chi(-a)=-\chi(a)$,
\item [$(4)$] if $a$ is archimedean equivalent to $b$ and $\sign(a)=\sign(b)$, then $\chi(a)=\chi(b)$.
\end{enumerate}
If in addition $\chi$ is surjective then $\chi$ is called a \textit{contraction map} and $(\Gamma,\chi)$ is called a \textit{contraction group}. Moreover, $(\Gamma, \chi)$ will be called \textit{centripetal} if $\forall a\in \Gamma^{\neq 0} (|a|>|\chi(a)|)$ and \textit{divisible} if $\Gamma$ is divisible.\\
\end{definition}

\begin{example}\label{ejemlog} The map $\chi$ defined in the value group $\Gamma_{\log}$ of $\mathbb{T}_{\log}$ is a precontraction map. Moreover, since the ordered valued logarithmic field $\mathbb{T}_{\log}$ satisfies the Growth Axiom, then  in fact $(\Gamma_{\log},\chi)$ is a centripetal precontraction group.

\begin{proof} We already see that $\chi$ is well defined. Now, let $v(f)$ be archimedean equivalent to $v(g)$ with $f,g\in \mathbb{T}_{\log}^{>0}$  and $v(f)\leq v(g)<0$, then there is a natural number $n$ such that $nv(g)=v(g^n)\leq v(f)$. By convexity of $v$ we obtain that $g^n\geq f\geq g$, and then $\log(g^n)=n\log(g)\geq \log(f)\geq \log(g)$. Thus $v(\log(g))=v(\log(f))$ and $\chi(v(f))=\chi(v(g))$.

Finally, if $v(f)<0$ with $f\in \mathbb{T}_{\log}^{>0}$, then by Growth Axiom we have $v(f)<v(\log(f))=\chi(v(f)).$ Thus, by definition of $\chi$ we conclude that $|v(a)|>|\chi(a)|$ for all $a\in \Gamma^{\neq 0}$, i.e. $(\Gamma_{\log}, \chi)$ is centripetal.
\end{proof}
\end{example}
\noindent
From the axioms we have some useful consequences:\\

\begin{lemma} \label{cont1}

Let $(\Gamma,\chi)$ be a precontraction group and $a,b\in \Gamma$.

\begin{enumerate}
\item [$(1)$] Axiom $(4)$ is equivalent to the single statement  $\chi(2a)=\chi(a)$.
\item [$(2)$] $\chi(\Gamma^{<0})\subseteq \Gamma^{<0}$ and $\chi(\Gamma^{<0}) =-\chi(\Gamma^{>0})$.
\item [$(3)$] $\chi(a+b)\geq \min\lbrace \chi(a),\chi(b)\rbrace$.
\item [$(4)$] If $\chi(a)<\chi(b)<0$ then $\chi(a-b)=\chi(a)$.
\item [$(5)$] If $0<\chi(a)<\chi(b)$ then $\chi(b-a)=\chi(b)$.
\item [$(6)$] Let $b>0>a$. If $\chi(|a|)>\chi(|b)|$ then  $\chi(a-b)=\chi(a)$, and if $\chi(|b|)>\chi(|a|)$ then $\chi(b-a)=\chi(b)$.

\end{enumerate}
\end{lemma}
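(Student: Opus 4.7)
The plan is to treat the six items in order, using axiom (4)---equivalently the identity $\chi(2a)=\chi(a)$ once item (1) is available---as the workhorse throughout. For (1), the forward direction is immediate from $|a|\leq|2a|\leq 2|a|$ and $\sign(a)=\sign(2a)$; for the reverse, given same-sign archimedean-equivalent $a,b$ with (WLOG) $0<a\leq b\leq na$, I would pick $k$ with $2^k\geq n$, iterate the hypothesis to obtain $\chi(2^k a)=\chi(a)$, and squeeze by monotonicity. For (2), $a<0$ gives $\chi(a)\leq\chi(0)=0$ by axioms (2) and (1), and $\chi(a)=0$ is forbidden by (1); the equality $\chi(\Gamma^{<0})=-\chi(\Gamma^{>0})$ is axiom (3) applied pointwise.

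For (3) I would split on the signs of $a,b$. If both are nonzero with the same sign, WLOG both positive with $a\leq b$, then $b\leq a+b\leq 2b$ makes $a+b$ archimedean-equivalent to $b$ with the same sign, so (1) yields $\chi(a+b)=\chi(b)\geq\min\{\chi(a),\chi(b)\}$. If the signs differ, WLOG $a>0>b$, so (2) gives $\chi(b)<0<\chi(a)$ and the minimum is $\chi(b)$; then either $a+b\geq 0$ and $\chi(a+b)\geq 0>\chi(b)$, or $a+b<0$ and $b<a+b$ forces $\chi(a+b)\geq\chi(b)$ by monotonicity. Degenerate cases $a=0$ or $b=0$ are immediate.

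Items (4)--(6) all rest on an archimedean-rigidity principle: whenever two same-sign elements of $\Gamma$ have distinct $\chi$-values, their archimedean classes must be strictly ordered (the contrapositive of axiom (4), combined with monotonicity). In (4), $\chi(a)<\chi(b)<0$ forces $a,b<0$ by (2) and $[a]>[b]$, so $|a|>n|b|$ for every $n\geq 1$; then $a-b<0$ with $|a-b|=|a|-|b|$ is arch-equivalent to $a$ with the same sign, and (1) delivers $\chi(a-b)=\chi(a)$. Item (5) follows by applying (4) to $-a,-b$ and then axiom (3). For (6) the rigidity is applied to $|a|,|b|$; the only new feature is that $|a-b|=|a|+|b|$ because $a,b$ have opposite signs, but the inequality $|a|<|a-b|<\tfrac{3}{2}|a|$ still places $a-b$ in the arch-class of the dominant summand and with matching sign, so (1) finishes. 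The main obstacle is isolating the arch-rigidity step cleanly and keeping the sign of $a-b$ straight across the two geometries $|a-b|=|a|\mp|b|$; once that is done the rest of the lemma is bookkeeping.
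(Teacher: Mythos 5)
Your proposal is correct; there are no gaps. Items (1) and (2) coincide with the paper's argument (the paper also reduces $\chi(na)=\chi(a)$ to iterating $\chi(2a)=\chi(a)$ and squeezing, then uses $n|a|\geq|b|$, $n|b|\geq|a|$). Where you diverge is in the mechanism for (3)--(6). For (3) the paper avoids any sign split: from $a\leq b$ it gets $2a\leq a+b\leq 2b$, hence $\chi(a)=\chi(2a)\leq\chi(a+b)\leq\chi(2b)=\chi(b)$ in one line; your case analysis on signs is fine but longer, though in the same-sign case it yields the slightly sharper conclusion $\chi(a+b)=\chi(b)$. For (4) the paper treats the ultrametric inequality (3) as the workhorse: monotonicity gives $\chi(a-b)\geq\chi(a)$ from $a-b>a$, and $\chi(a)=\chi((a-b)+b)\geq\min\{\chi(a-b),\chi(b)\}$ forces $\chi(a)\geq\chi(a-b)$ since $\chi(b)>\chi(a)$. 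You instead prove the archimedean-rigidity fact $[a]>[b]$ (which the paper never makes explicit, though it is implicitly used elsewhere, e.g.\ in Lemma \ref{cont2}), compute $|a-b|$ directly, and invoke axiom (4); this is self-contained and arguably more informative, since it also explains \emph{why} the classes separate, but it costs you the explicit bookkeeping with $|a\pm b|$ that the paper's valuation-style argument sidesteps. Items (5) and (6) are left to the reader in the paper (``follow from item (4)''), and your reduction of (5) via $a,b\mapsto -b,-a$ and direct treatment of (6) via $|a-b|=|a|+|b|$ are exactly the intended fillings-in.
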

\begin{proof}
\begin{enumerate}
\item [$(1)$] We just have to show that the statement $\forall a\in \Gamma$  $\chi(2a)=\chi(a)$ implies axiom $(4)$. First, by axiom $(2)$ we can observe that if $\chi(2a)=\chi(a)$ then $\chi(na)=\chi(a)$ for all $n\in \mathbb{N}$ . Now, If $a$ is archimedean equivalent to $b$ and $\sign(a)=\sign(b)$ then there is a natural number $n$ such that $n|a|\geq |b|$ and $n|b|\geq |a|$, so $\chi(a)=\chi(na)\geq \chi(b)=\chi(nb)\geq\chi(a)$ and thus $\chi(a)=\chi(b)$.
\item [$(2)$] If $a<0$ then by axioms 1 and 2 we have $\chi(a)<0$ and by axiom $(3)$ we have $\chi(-a)=-\chi(a)>0$.
\item [$(3)$] Without loss of generality we can assume that $a<b$. Then 
\[a+a<a+b<b+b\]
and
\[\chi(2a)\leq\chi(a+b)\leq\chi(2b).\]
Since  $\chi(2x)=\chi(x)$ for all $x\in \Gamma$, then 
\[\chi(a)\leq\chi(a+b)\leq\chi(b),\]
so $\chi(a+b)\geq \min\lbrace \chi(a),\chi(b)\rbrace$.
\item [$(4)$] Since $\chi(a)<\chi(b)<0$ then $a<b<0$ and $a-b>a$. Thus $\chi(a-b)\geq\chi(a)$. On the other hand, as $\chi(b)>\chi(a)$ and by item $(3)$ 
\[\chi(a)=\chi((a-b)+b)\geq \min\lbrace \chi(a-b),\chi(b)\rbrace,\]
then  $\chi(a)\geq\chi(a-b)$. Thus, $\chi(a-b)=\chi(a)$.\\

Items $(5)$ and $(6)$ follow of item $(4)$.

\end{enumerate}

\end{proof} 
\noindent
Working with the natural valuation of $\Gamma$, for example we have the following immediate properties:\\

\begin{lemma}\label{cont2} Let $(\Gamma,\chi)$ be a precontraction group. Then
\begin{enumerate}
\item [$(1)$] For all $a,b\in \Gamma$, if $v(a)\leq v(b)$ then $|\chi(a)|\geq |\chi(b)|$. 
\item [$(2)$] For all $a,b\in \Gamma$, if $v(a-b)>v(a)$ then $\chi(a)=\chi(b)$.
\item [$(3)$] For all $a_1,a_2,...,a_n\in \Gamma$, if $v(a_k)<v(a_i)$ for all $i\neq k$ then $\chi(\sum\limits_{i=1}^n a_i)=\chi(a_k)$.
\item [$(4)$] $(\Gamma,\chi)$ is a centripetal precontraction group if and only if $v(\chi(a))>v(a)$ for all $a\in \Gamma^{\neq 0}$.
\end{enumerate}
\end{lemma}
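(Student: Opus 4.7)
The plan is to lean on two easy translations. Writing $v(a)=[a]$ for the archimedean class with the reversed order, the statement $v(a)\leq v(b)$ becomes $n|a|\geq |b|$ for some $n\geq 1$, while $v(a-b)>v(a)$ becomes $n|a-b|\leq |a|$ for every $n\geq 1$. On the other side, from axiom (3) one has $|\chi(c)|=\chi(|c|)$, and Lemma \ref{cont1}(1) (which was used in its own proof) gives $\chi(nc)=\chi(c)$ for every $c\in\Gamma$ and $n\in\mathbb{N}$. With these two dictionaries each item reduces to a short application of the axioms.

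For (1), pick $n$ with $n|a|\geq |b|$; monotonicity yields $\chi(|b|)\leq \chi(n|a|)=\chi(|a|)$, and converting via $|\chi(x)|=\chi(|x|)$ gives $|\chi(b)|\leq |\chi(a)|$. For (2), if $a=0$ then the hypothesis forces $b=0$; otherwise $|a-b|\leq |a|/2$ first rules out opposite signs (which would give $|a-b|\geq |a|$), and then from $|b|\geq |a|-|a-b|\geq |a|/2$ and $|b|\leq |a|+|a-b|\leq 2|a|$ one sees that $a$ and $b$ are archimedean equivalent with the same sign, so axiom (4) delivers $\chi(a)=\chi(b)$. Item (3) is a direct application of (2): set $s=\sum_{i=1}^n a_i$, so that by the ultrametric property $v(s-a_k)=v\bigl(\sum_{i\neq k}a_i\bigr)\geq \min_{i\neq k}v(a_i)>v(a_k)$, and (2) gives $\chi(s)=\chi(a_k)$.

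Item (4) is the one requiring a touch more thought. The reverse direction is immediate: if $v(\chi(a))>v(a)$ and $a\neq 0$, then axiom (1) forces $\chi(a)\neq 0$, and $2|\chi(a)|\leq |a|$ together with $|\chi(a)|>0$ yields $|a|>|\chi(a)|$. For the forward direction, assume $(\Gamma,\chi)$ is centripetal and, aiming at a contradiction, that $v(\chi(a))\leq v(a)$ for some $a\neq 0$. Combined with $|\chi(a)|<|a|$ this forces $v(\chi(a))=v(a)$, i.e.\ $a$ and $\chi(a)$ are archimedean equivalent, and they share a sign by axiom (3). Axiom (4) then gives $\chi(\chi(a))=\chi(a)$, contradicting centripetality applied to $\chi(a)\neq 0$.

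The only subtlety anywhere is bookkeeping the reversal of the order under $v$ and the sign matching demanded by axiom (4); there is no genuine obstacle, as each item is a one-step consequence of the axioms once this dictionary is in place.
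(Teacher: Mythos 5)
Your proof is correct, and in fact the paper offers no proof at all for this lemma -- it is stated as one of the ``immediate properties'' of the natural valuation -- so your argument simply supplies the intended unpacking: translate $v$ back into archimedean classes with the reversed order and apply axioms (2)--(4) together with $\chi(na)=\chi(a)$. Two cosmetic points only: in item (2) the expression $|a-b|\leq |a|/2$ should be read as $2|a-b|\leq|a|$ since $\Gamma$ is not assumed divisible in this lemma, and when $a=0$ the hypothesis $v(a-b)>v(a)=\infty$ is simply vacuous rather than forcing $b=0$; neither affects the argument.
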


\noindent
Now, the main result about contraction groups proved in \cite{Kuh2, Kuh3} is the following:\\

\begin{theorem} In the language of ordered groups expanded by a unary function symbol for the contraction map, the theory of nontrivial divisible centripetal contraction groups is complete, decidable, admits quantifier elimination and is weakly o-minimal\footnote{A theory in which an order is given or definable is called \textit{weakly o-minimal} if in every model of this theory, each definable subset is a finite union of convex subsets. Moreover, if each one of such convex subsets is an interval, then we say that the theory is \textit{o-minimal}}, and it is the model completion of the theory of centripetal precontraction groups.
\end{theorem}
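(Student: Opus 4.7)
The plan is to establish quantifier elimination first, and then derive completeness, decidability, and weak o-minimality as fairly standard consequences, treating the model-completion statement separately at the end. For QE I would use the usual embedding criterion: given two models $(G,\chi)$ and $(G^*,\chi^*)$ of the theory with $(G^*,\chi^*)$ sufficiently saturated, any partial isomorphism between substructures of $G$ and $G^*$ extends to an embedding of $G$ into $G^*$. Showing this reduces the whole problem to a single-element extension lemma.

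The single-element extension lemma is the technical heart. Given a substructure $H \subseteq G$ (closed under $+$, $-$, and $\chi$) and $a \in G \setminus H$, the task is to realize the complete quantifier-free type of $a$ over $H$ inside $G^*$. I would proceed in stages: first, since $G$ is divisible, pass to the $\mathbb{Q}$-linear divisible hull of $H + \mathbb{Q}a$; second, identify the cut of $a$ over $H$ together with its archimedean class relative to $H$, using Lemma \ref{cont1} and Lemma \ref{cont2} to control how $\chi$ acts on sums involving $a$; third, handle the iterated sequences $\chi^n(a)$ and, using surjectivity of $\chi$ in the contraction setting, also the backward iterates under $\chi^{-1}$. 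The centripetal axiom guarantees that the archimedean class of $\chi^n(a)$ strictly decreases as $n$ grows, so the forward orbit eventually enters the archimedean structure already present in $H$, while surjectivity of $\chi^*$ in the saturated model produces matching preimages for the backward orbit.

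With QE in hand, completeness follows because a prime substructure (for instance a copy of $\mathbb{Q}$ in a single archimedean class with $\chi$ determined by the centripetal axiom) embeds into every nontrivial model, so any two models agree on all quantifier-free sentences and hence on all sentences. Decidability follows from completeness together with the explicit recursive axiomatization of the theory. Weak o-minimality then comes from inspecting quantifier-free definable subsets of a single variable: by QE these are Boolean combinations of atomic formulas of the form $t_1(x) \lessgtr t_2(x)$ or $\chi(t(x)) \lessgtr s(x)$ with $t,s$ group terms, and parts $(3)$–$(6)$ of Lemma \ref{cont1} together with Lemma \ref{cont2} show that each such atomic set is a finite union of convex sets, giving weak o-minimality.

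For the model-completion assertion, I would verify that every centripetal precontraction group embeds in a nontrivial divisible centripetal contraction group. The construction is transfinite: alternately divisibilize, adjoin new archimedean classes, and adjoin $\chi$-preimages wherever they are missing, always declaring the extension to remain centripetal; the centripetal condition prevents any cycle or contradiction in this process, and the colimit is a contraction group extending the original precontraction group. Combined with QE, this shows that the theory of nontrivial divisible centripetal contraction groups is exactly the model completion of the theory of centripetal precontraction groups. The main obstacle throughout is the single-element extension lemma, specifically the simultaneous control of forward iterates $\chi^n(a)$ and backward $\chi$-preimages of $a$ while tracking archimedean classes and the order type; once this step is in place, the remaining statements are largely bookkeeping.
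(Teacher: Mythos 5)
First, a point of comparison: the paper does not actually prove this theorem --- it is quoted as a known result from Kuhlmann's \emph{Abelian groups with contractions} I and II (\cite{Kuh2,Kuh3}), so there is no in-paper proof to match your attempt against. Your outline does follow the same general strategy as Kuhlmann's original argument (embedding lemmas yielding quantifier elimination, with completeness, decidability and weak o-minimality as consequences), but several of the concrete claims you rely on are wrong or unsupported.

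The most serious problem is in the single-element extension lemma, which you correctly identify as the technical heart but then resolve with a false assertion: the centripetal axiom does \emph{not} make the forward orbit $\chi^n(a)$ ``eventually enter the archimedean structure already present in $H$.'' Centripetality gives $[\chi(a)]<[a]$ for every $a\neq 0$ (Lemma \ref{cont2}(4)), so each iterate lies in a strictly smaller archimedean class, and these classes can all be new: in $\Gamma_{\log}$ one has $\chi^n(-\ell_1)=-\ell_{n+1}$, each in a fresh class. The substructure generated by $H\cup\lbrace a\rbrace$ must therefore be built as the union of an infinite chain $H\subseteq H+\mathbb{Q}a\subseteq H+\mathbb{Q}a+\mathbb{Q}\chi(a)\subseteq\cdots$ (this is exactly what Lemmas \ref{kul3} and \ref{kul4} of the paper do in the precontraction setting), and the embedding into $G^*$ is obtained by realizing the whole orbit type via saturation, not because the orbit terminates. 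Second, your prime substructure cannot exist as described: a copy of $\mathbb{Q}$ inside a single archimedean class is not closed under $\chi$, again because $[\chi(a)]<[a]$; completeness instead follows directly from QE, since the only closed terms of the language denote $0$ and so all quantifier-free sentences have fixed truth values. Third, weak o-minimality is not ``bookkeeping'': atomic formulas in one variable involve arbitrarily nested occurrences of $\chi$ (Kuhlmann's $\chi$-polynomials), not merely $\chi(t(x))$ with $t$ a pure group term, and controlling the sign changes of such terms is the entire content of \cite{Kuh3}. The overall architecture of your plan is the right one, but as written these three steps do not go through.
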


\section{The theory $T_{pdg}$}\label{sectcd}

A key feature of the centripetal precontraction group $(\Gamma_{\log},\chi)$ is that the image of $\Gamma_{\log}^{<0}$ by $\chi$ is a discrete set with first element and where the immediate successor of an element $a\in \chi(\Gamma_{\log}^{<0})$ is $\chi(a)$. Thus, to capture this property we introduce the following definition:\\
 
\begin{definition}\label{clog}
Let $L_{pdg}=\lbrace +,-,0,<,\chi, c\rbrace,$ be the language of ordered groups augmented by a unary function symbol $\chi$ and a constant symbol $c$. We say that a nontrivial centripetal precontraction group $(\Gamma,\chi)$ is a model of the $L_{pdg}$-theory $T_{pdg}$ if:

\begin{enumerate}
\item [$(1)$] $\chi(\Gamma^{<0})$ has a least element $c$,
\item [$(2)$] $\chi:\chi(\Gamma^{<0})\rightarrow \chi(\Gamma^{<0})^{>c}$ is a bijection,
\item [$(3)$] $\forall a,b \in \chi(\Gamma^{<0})$ if $a<b$ then $a<\chi(a)\leq b$ 
\item [$(4)$] $\Gamma$ is a divisible ordered group.
\end{enumerate}
\end{definition}

\noindent
From the above definition, we can see that each substructure $S$ of a model of $T_{pdg}$ is a centripetal precontraction group where $\chi(S^{<0})$ has a least element and $\chi(a)$ is the immediate successor of $a$ for each $a\in\chi(S^{<0})$.\\

\begin{example} 
\begin{enumerate}

\item [$(1)$] Clearly,  $(\Gamma_{\log},\chi)$ is a model of $T_{pdg}$. Moreover, 
\[\chi(\Gamma_{\log}^{<0})=\lbrace -\ell_2,-\ell_3,... \rbrace,\]
where $\chi(-\ell_k)=-\ell_{k+1}$ and $-\ell_k<-\ell_{k+1}$. 

\item [$(2)$] Let $\oplus_{i}\mathbb{Q}e_i$ be a vector space over $\mathbb{Q}$ with ordered basis $(e_i)$. Under the usual lexicographic order, i.e. 
\[\sum a_ie_i>0\text{ iff } a_k>0 \text{ for the least } k \text{ such that } a_k\neq 0,\]
$\oplus_{i}\mathbb{Q}e_i$ becomes an ordered abelian group and if we define the function $\chi_{\mathbb{Q}}:\oplus_{i}\mathbb{Q}e_i\rightarrow \oplus_{i}\mathbb{Q}e_i$ as 
\[\chi_{\mathbb{Q}}(\sum a_ie_i)=\sign(a_k)e_{k+1}\]
with $k$ the minimal index such that $a_k\neq 0$, then  $(\oplus_{i}\mathbb{Q}e_i,\chi_{\mathbb{Q}})$ is a model of $T_{pdg}$.\\

\end{enumerate}
\end{example}
\noindent
In addition to the properties listed in lemmas \ref{cont1} and \ref{cont2}, we can observe that if $(\Gamma,\chi)$ is a model of $T_{pdg}$, then the discrete set $\chi(\Gamma^{<0})$ is cofinal in $\Gamma^{<0}$ since for all $a\in \Gamma^{<0}$ we have $a<\chi(a)$. Now, although in the models of $T_{pdg}$ the map $\chi$ is not surjective and we can not proceed as in \cite{Kuh2} to prove the model completeness of $T_{pdg}$, here we will use the properties of the discrete set $\chi(\Gamma^{<0})$ to do that. 

\subsection{Some algebraic properties of models of $T_{pdg}$}

First, we can observe the following:\\

\begin{lemma}\label{indep} If $(\Gamma, \chi)$ is a model of $T_{pdg}$, then $\Gamma$ is a vector space over $\mathbb{Q}$ and $\chi(\Gamma^{<0})$ is a linearly independent subset of $\Gamma$.
\end{lemma}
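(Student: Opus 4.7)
The plan is to treat the two claims separately. The $\mathbb{Q}$-vector space structure is essentially automatic: axiom $(4)$ of $T_{pdg}$ says that $\Gamma$ is divisible, and every ordered abelian group is torsion-free, so $\Gamma$ is a divisible torsion-free abelian group, and therefore carries a canonical $\mathbb{Q}$-vector space structure (where $q = m/n \in \mathbb{Q}$ acts by $qa :=$ the unique $x \in \Gamma$ with $nx = ma$).

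For the linear independence I would argue by contradiction. Suppose there exist distinct elements $a_1 < a_2 < \cdots < a_n$ in $\chi(\Gamma^{<0})$ and nonzero rationals $q_1, \ldots, q_n$ with $\sum_{i=1}^n q_i a_i = 0$; after clearing denominators, we may assume $q_i \in \mathbb{Z} \setminus \{0\}$. The core structural claim is that the $a_i$ lie in strictly decreasing archimedean classes as they move up toward $0$, namely $[a_1] > [a_2] > \cdots > [a_n]$. To establish this, I would combine two facts. First, axiom $(3)$ of $T_{pdg}$: from $a_i < a_{i+1}$ in $\chi(\Gamma^{<0})$ we get $a_i < \chi(a_i) \leq a_{i+1} < 0$, hence $|a_{i+1}| \leq |\chi(a_i)|$ and therefore $[a_{i+1}] \leq [\chi(a_i)]$. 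Second, Lemma \ref{cont2}(4), which rephrases centripetality as $v(\chi(a_i)) > v(a_i)$, i.e.\ $[\chi(a_i)] < [a_i]$. Together these give $[a_{i+1}] \leq [\chi(a_i)] < [a_i]$.

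Once the archimedean chain is in place the dependence collapses: the ``leading term'' $q_1 a_1$ lies in archimedean class $[a_1]$, while the tail $\sum_{i \geq 2} q_i a_i$ is a finite sum of terms each in class at most $[a_2] < [a_1]$, so the strict definition of $[a_2] < [a_1]$ (namely $n|a_2| \leq |a_1|$ for all $n \geq 1$) forces $|q_1 a_1|$ to strictly exceed $|\sum_{i \geq 2} q_i a_i|$, and hence $\sum_{i=1}^n q_i a_i \neq 0$, a contradiction. I do not foresee any genuine obstacle; the only step that is not purely formal is the archimedean-class inequality for consecutive $a_i$'s, and this is essentially immediate from axiom $(3)$ together with Lemma \ref{cont2}(4). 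The rest is routine bookkeeping about how archimedean classes behave under integer scalar multiplication and addition in an ordered abelian group.
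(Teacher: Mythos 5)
Your proof is correct, but it takes a genuinely different route from the paper's. The paper never passes through archimedean classes: it applies $\chi$ directly to the putative relation $\alpha=\sum_{i=1}^n q_i a_i$ and, using the injectivity of $\chi$ on $\chi(\Gamma^{<0})$ (axiom $(2)$ of $T_{pdg}$, which gives $\chi(a_1)<\cdots<\chi(a_n)$) together with the ultrametric-style properties of Lemma \ref{cont1}, concludes that $\chi(\alpha)=\sign(q_1)\chi(a_1)\neq 0$, whence $\alpha\neq 0$ by axiom $(1)$ of precontraction groups. You instead first establish the strict chain $[a_1]>[a_2]>\cdots>[a_n]$ from axiom $(3)$ of $T_{pdg}$ combined with centripetality in the form of Lemma \ref{cont2}$(4)$, and then finish by leading-term domination in the ordered group; your bookkeeping there is sound (after clearing denominators, $\bigl|\sum_{i\geq 2} q_i a_i\bigr|\leq N|a_2|<|a_1|\leq|q_1a_1|$ since $m|a_2|\leq|a_1|$ for all $m$). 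Each approach has its advantages: the paper's one-line computation $\chi(\alpha)=\sign(q_1)\chi(a_1)$ is exactly the fact reused later (e.g.\ in Lemmas \ref{teo0} and \ref{teo2}), while your argument is more elementary --- it avoids the $\min$-inequality for $\chi$ entirely --- and it extracts the slightly stronger structural fact that the elements of $\chi(\Gamma^{<0})$ lie in pairwise distinct archimedean classes, of which linear independence is an immediate consequence. Your handling of the $\mathbb{Q}$-vector space claim (divisible plus torsion-free) matches the paper's.
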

\begin{proof}
Since $\Gamma$ is a divisible ordered group it follows that $\Gamma$ is a vector space over $\mathbb{Q}$. Moreover, given $q_1, q_2,...,q_n\in \mathbb{Q}$ with $q_1\neq 0$ and $a_1,a_2,...,a_n\in \chi(\Gamma^{<0})$ with $a_1<a_2<...<a_n$ then 
\[\chi(a_1)<\chi(a_2)<...<\chi(a_n),\] 
and if $\alpha=\sum\limits_{i=1}^n q_ia_i$ then by lemma \ref{cont1} we have that $\chi(\alpha)=\chi(a_1)$ whenever $q_1>0$ and $\chi(\alpha)=-\chi(a_1)$ whenever $q_1<0$. Thus $\alpha\neq 0$.

\end{proof}
\noindent
Regarding the construction of new precontraction groups we have the following:\\

\begin{lemma} Let $(\Gamma,\chi)$ be a centripetal precontraction group and $\Delta\subseteq \Gamma$ be a nonempty convex subgroup such that if $\chi(x)\in \Delta$ then $x\in \Delta$ for $x\in \Gamma$. Then:
\begin{enumerate}
\item [$(1)$] There is a unique order $\leq'$ in $\Gamma/\Delta$ has a such that $\Gamma/\Delta$ is an ordered abelian group in which if $a\leq b$ then $\overline{a}\leq'\overline{b}$ for $a,b\in \Gamma$.
\item [$(2)$] The map $\chi':\Gamma/\Delta\rightarrow \Gamma/\Delta$ given by $\chi'(\overline{a})=\overline{\chi(a)}$ is well defined and makes $(\Gamma/\Delta,\chi')$ a centripetal precontraction group.
\end{enumerate}

\end{lemma}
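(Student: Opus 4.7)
For part (1) I would use the standard construction of the quotient order: declare $\overline{a}\leq'\overline{b}$ if and only if there exists $d\in\Delta$ with $a\leq b+d$ (equivalently, if some representatives of $\overline{a}$ and $\overline{b}$ satisfy the corresponding inequality in $\Gamma$). Well-definedness and transitivity are immediate, totality follows from totality in $\Gamma$, and translation invariance comes directly from the group structure. Antisymmetry is where convexity of $\Delta$ is used: if $a\leq b+d_1$ and $b\leq a+d_2$ with $d_i\in\Delta$, then $b-a$ lies between $-d_2$ and $d_1$, hence $b-a\in\Delta$. For uniqueness, any order satisfying the stated property must agree with $\leq'$ on every pair coming from a comparable pair in $\Gamma$; the totality of both candidate orders together with the fact that $\overline{a}=\overline{b}$ is an algebraic condition (independent of the order) forces the two orders to coincide.

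For part (2) the central task is showing $\chi'$ is well defined. Suppose $a-b\in\Delta$, and split into two cases. If $b\in\Delta$, then $a\in\Delta$ too; using that $\Delta$ is convex and that $|\chi(x)|<|x|$ (centripetal), both $\chi(a)$ and $\chi(b)$ lie in $\Delta$, so $\chi(a)-\chi(b)\in\Delta$. If $b\notin\Delta$, then $a\notin\Delta$ either. In this situation $a$ and $b$ must have the same sign, since an opposite-sign pair would give $|a-b|=|a|+|b|$ and convexity of $\Delta$ would then force $|a|,|b|\in\Delta$. Since $|a|,|b|\notin\Delta$ while $a-b\in\Delta$, convexity yields $|a-b|<\min(|a|,|b|)$, hence $|a|<2|b|$ and $|b|<2|a|$, so $[a]=[b]$ in $\Gamma$. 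Axiom $(4)$ of precontraction groups then gives $\chi(a)=\chi(b)$, and we are done.

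Once well-definedness is secured, the remaining precontraction axioms transfer with little effort: monotonicity follows by choosing representatives $a\leq b$; the identities $\chi'(-\overline{a})=-\chi'(\overline{a})$ and $\chi'(2\overline{a})=\chi'(\overline{a})$ (the latter equivalent to axiom $(4)$ by Lemma \ref{cont1}) descend directly from the corresponding identities in $\Gamma$; and the axiom $\chi'(\overline{a})=0\iff\overline{a}=0$ is exactly the place where the extra hypothesis $\chi(x)\in\Delta\Rightarrow x\in\Delta$ is used (for one direction), the other direction being the centripetal-plus-convex observation above. The main obstacle is the centripetal property itself. Given $\overline{a}\neq 0$, i.e.\ $a\notin\Delta$, I need $|a|-|\chi(a)|\notin\Delta$ on top of the already known inequality $|a|-|\chi(a)|>0$. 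Suppose for contradiction $|a|-|\chi(a)|\in\Delta$. Then $|a|$ and $|\chi(a)|$ are two positive elements outside $\Delta$ (the latter by the extra hypothesis on $\Delta$) differing by an element of $\Delta$; the archimedean-equivalence argument from the well-definedness step yields $[|a|]=[|\chi(a)|]$, and axiom $(4)$ gives $\chi(|a|)=\chi(|\chi(a)|)$, that is, $|\chi(a)|=|\chi(\chi(a))|$. This contradicts centripetality applied to the nonzero element $\chi(a)$, finishing the proof.
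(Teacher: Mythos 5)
Your proof is correct, and for part $(2)$ it takes a genuinely different route from the paper's. The paper proves well-definedness of $\chi'$ by splitting on the signs of $\chi(a)$ and $\chi(b)$ and sandwiching: in the same-sign cases it uses the centripetal property together with Lemma \ref{cont1} to get $a-b>\chi(a-b)=\chi(a)>\chi(b)>0$ (or its mirror image), so that convexity of $\Delta$ forces $\chi(a),\chi(b)\in\Delta$; in the opposite-sign case it bounds $\chi(a)-\chi(b)$ between $a-b$ and $0$ and again invokes convexity. You instead split on whether $a,b$ lie in $\Delta$: inside $\Delta$ you push $\chi(a),\chi(b)$ into $\Delta$ by centripetality plus convexity, and outside $\Delta$ you show $a$ and $b$ are actually archimedean equivalent with the same sign (via $|a-b|<\min(|a|,|b|)$, hence $|a|<2|b|$ and $|b|<2|a|$) and conclude the stronger fact $\chi(a)=\chi(b)$ from axiom $(4)$. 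Your decomposition buys you a reusable tool: the same ``differ by an element of $\Delta$ implies archimedean equivalent'' observation is exactly what you need to verify that the quotient map is centripetal, a verification the paper omits entirely (it only asserts that the quotient is a centripetal precontraction group once the hypothesis $\chi(x)\in\Delta\Rightarrow x\in\Delta$ is in hand). Your contradiction argument for centripetality --- assuming $|a|-|\chi(a)|\in\Delta$, deducing $[|a|]=[|\chi(a)|]$ and hence $|\chi(a)|=|\chi(\chi(a))|$, contradicting centripetality at $\chi(a)$ --- is sound and fills a real gap in the paper's exposition. Part $(1)$ is handled the same way in both: the paper simply declares $\overline{a}>0$ iff $a>\Delta$ and appeals to the general theory, while you spell out antisymmetry via convexity and the uniqueness argument.
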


\begin{proof} Since the $(1)$ is a general property of ordered abelian groups, it is enough to put $\overline{a}>0$ if and only if $a>\Delta$.
First we show that $\chi'$ is well defined. To do that we prove that if $a-b\in \Delta$ then $\chi(a)-\chi(b)\in \Delta$. If $\chi(a)=\chi(b)$ then clearly $\chi(a)-\chi(b)=0\in \Delta$. Now, if $\chi(a)\neq \chi(b)$ then we have the following cases:
\begin{itemize}
\item $\chi(a)>\chi(b)>0$. Thus, $a-b>0$ and by centripetal property we have $a-b>\chi(a-b)>0$. Moreover, $\chi(a-b)=\chi(a)$. So, $a-b>\chi(a)>\chi(b)>0$ which implies $\chi(a),\chi(b)\in \Delta$ and then $\chi(a)-\chi(b)$. 
\item $\chi(a)<\chi(b)<0$. Similar to the previous case.
\item $a<0<b$. Thus $a<\chi(a)<0<\chi(b)<b$, $a-b<0$ and $a-b<\chi(a-b)<0$. Moreover, $a-b<\chi(a)-\chi(b)<0$ and then $\chi(a)-\chi(b)\in \Delta$.
\end{itemize}
Now, since $\chi(x)\in \Delta$ implies that $x\in \Delta$, then we can prove that $(\Gamma/\Delta,\chi')$ is a centripetal precontraction group.
\end{proof}

\subsection{Embedding lemmas}
Let $(\Gamma,\chi), (\Gamma',\chi')$ be precontraction groups. We say that $(\Gamma',\chi')$ is an extension of $(\Gamma,\chi)$ if $\Gamma'$ is an extension of $\Gamma$ as valued groups $\chi(\Gamma)=\chi'(\Gamma')\cap \Gamma$ and $\chi'(a)=\chi(a)$ for $a\in \Gamma$. Moreover, we say that 
\[\phi:(\Gamma,\chi)\rightarrow (\Gamma',\chi')\]
is an embedding of precontraction groups if $\phi:\Gamma\rightarrow \Gamma'$ is an embedding of ordered abelian groups such that
\[\phi(\chi(a))=\chi'(\phi(a)) \text{ for all } a\in \Gamma.\]
From this definition it follows that if $(\Gamma,\chi)\subseteq (\Gamma',\chi')$ are models of $T_{pdg}$, then we have the following possibilities: First we can have $\chi(\Gamma^{<0})=\chi'(\Gamma'^{<0})$, which is always true if $[\Gamma]=[\Gamma']$ and some times when $[\Gamma]\neq [\Gamma']$. Secondly, we can have $\chi(\Gamma^{<0})\neq \chi'(\Gamma'^{<0})$, and here we have again two possibilities: either there is $b\in\chi'(\Gamma'^{<0})$ such that $b>\chi(\Gamma^{<0})$ or there is a nonempty lower cut $G$ in $\chi(\Gamma^{<0})$ and $b\in \chi'(\Gamma'^{<0})\setminus \chi(\Gamma^{<0})$ such that $\chi(G)\subseteq G$ and $G<b<\Gamma^{>G}$.

\begin{definition}\label{scut} From now on, we call $G\subseteq \chi(\Gamma^{<0})$ a \textit{special cut} if $G$ is a lower cut in $\chi(\Gamma^{<0})$ such that $\chi(G)\subseteq G$ and we denote by $scut(\chi(\Gamma^{<0}))$ the collection of all special cuts of $\chi(\Gamma^{<0})$.  
\end{definition}

\noindent
Based on Gehret's work about the theory of the asymptotic couple of $\mathbb{T}_{\log}$ in \cite{Ger1},  in the following we present some embedding lemmas which deal with the above cases and that will be used to prove the model completeness of $T_{pdg}$.

\subsubsection*{Case 1. $(\Gamma,\chi)\subseteq (\Gamma',\chi')$ with $[\Gamma]=[\Gamma']$.}

From \cite[lemma 3.6]{Kuh2} we have the following result:\\ 
 
\begin{lemma}\label{kul1} Let $(\Gamma,\chi)$ be a centripetal precontraction group. Then for each extension $(\Gamma',<)$ of $(\Gamma,<)$ such that $[\Gamma]=[\Gamma']$, $\chi$ extends in a unique way to a centripetal precontraction $\chi'$ on $\Gamma'$ and we have $\chi(\Gamma')=\chi(\Gamma)$. Particularly, if $\mathbb{Q}\Gamma=\mathbb{Q}\otimes_{\mathbb{Z}}\Gamma$ is the divisible hull of $\Gamma$ then $\chi(\mathbb{Q}\Gamma)=\chi(\Gamma)$, since every element in $\mathbb{Q}\Gamma$ is archimedean equivalent to some element of $\Gamma$.
\end{lemma}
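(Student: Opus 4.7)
The plan is to exploit axiom $(4)$ in the definition of precontraction: whenever two nonzero elements are archimedean equivalent with the same sign, $\chi$ must send them to the same value. Since the hypothesis $[\Gamma]=[\Gamma']$ guarantees that every nonzero $a\in\Gamma'$ is archimedean equivalent, with matching sign, to some element $b\in\Gamma$, axiom $(4)$ forces any precontraction extending $\chi$ to satisfy $\chi'(a)=\chi(b)$. This observation simultaneously gives the uniqueness statement and tells us how to define the extension: set $\chi'(a):=\chi(b)$ for any witness $b\in\Gamma^{\neq 0}$ with $[b]=[a]$ and $\sign(b)=\sign(a)$, and $\chi'(0):=0$.

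The next step is to verify that $\chi'$ is well defined. If $b,b'\in\Gamma^{\neq 0}$ are both archimedean equivalent to $a$ with the same sign, then they are archimedean equivalent to each other with the same sign, so axiom $(4)$ applied inside $\Gamma$ yields $\chi(b)=\chi(b')$. Axioms $(1)$ and $(3)$ for $\chi'$ are immediate: $b\neq 0$ gives $\chi(b)\neq 0$, and $-a$ is archimedean equivalent to $-b$ with matching sign, so $\chi'(-a)=\chi(-b)=-\chi(b)=-\chi'(a)$. Axiom $(4)$ for $\chi'$ is built into the definition. For monotonicity (axiom $(2)$), I would consider $a_1\leq a_2$ in $\Gamma'$; the only nontrivial subcases are $a_1<a_2$ with $a_1,a_2$ in different archimedean classes or opposite signs. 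When $a_1<0<a_2$, axioms $(1)$ and $(3)$ already give $\chi'(a_1)<0<\chi'(a_2)$; otherwise one picks witnesses $b_1,b_2\in\Gamma$ with the same sign and archimedean class as $a_1,a_2$ respectively such that $b_1\leq b_2$, and invokes monotonicity of $\chi$ on $\Gamma$.

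For the centripetal property, the key intermediate fact is that in any centripetal precontraction group, axiom $(4)$ together with $|\chi(b)|<|b|$ forces $[\chi(b)]<[b]$: otherwise $\chi(b)$ would be archimedean equivalent to $b$ with the same sign, hence $\chi(\chi(b))=\chi(b)$ by axiom $(4)$, contradicting $|\chi(\chi(b))|<|\chi(b)|$. Consequently $n|\chi(b)|<|b|$ for all $n\geq 1$, and since $|a|$ and $|b|$ satisfy $|a|\leq n|b|$ and $|b|\leq n|a|$ for some $n$, we conclude $|\chi'(a)|=|\chi(b)|<|a|$. The image identity $\chi'(\Gamma')=\chi(\Gamma)$ is immediate from the construction, since every value of $\chi'$ is by definition a value of $\chi$. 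The ``particularly'' clause for the divisible hull $\mathbb{Q}\Gamma$ then follows because scalar multiplication by a positive rational preserves archimedean classes and signs.

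The main obstacle is the routine but tedious bookkeeping in the monotonicity verification, where several sign and archimedean-class configurations must be handled; no case, however, requires a new idea beyond transferring axioms $(2)$ and $(4)$ from $\chi$ on $\Gamma$ via a witness $b$. The subtlety worth flagging is the observation $[\chi(b)]<[b]$, which is what really makes the centripetal inequality survive the extension rather than merely being preserved up to an archimedean class.
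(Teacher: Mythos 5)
Your proof is correct, and there is nothing in the paper to compare it against in detail: the paper states this lemma without proof, citing it as Lemma 3.6 of Kuhlmann's \emph{Abelian groups with contractions I}. Your argument is the standard one --- define $\chi'(a)=\chi(b)$ for a same-sign archimedean witness $b\in\Gamma$, check the axioms transfer, and note that this definition is forced by axiom $(4)$, which gives uniqueness and $\chi'(\Gamma')=\chi(\Gamma)$ simultaneously. You also correctly identify and handle the one genuinely non-routine point, namely that centripetality is equivalent to $[\chi(b)]<[b]$ (the paper records this separately as Lemma \ref{cont2}$(4)$), which is exactly what makes $|\chi'(a)|<|a|$ survive the passage to a different representative of the archimedean class.
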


\noindent
Using the quantifier elimination of the theory of divisible ordered abelian groups we have:\\

\begin{lemma} \label{case1}
Let $(\Gamma,\chi), (\Gamma',\chi')$ and $(\Gamma^*,\chi^*)$ be models of $T_{pdg}$, such that $(\Gamma,\chi)\subseteq (\Gamma',\chi')$, $[\Gamma]=[\Gamma']$, $(\Gamma^*,\chi^*)$ is $k$-saturated for some $k>\card(\Gamma')$, and $\phi:(\Gamma,\chi)\rightarrow (\Gamma^*,\chi^*)$ is an embedding, then there is an embedding $\phi': (\Gamma',\chi')\rightarrow (\Gamma^*,\chi^*)$ which extends $\phi$.

\end{lemma}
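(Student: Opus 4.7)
The plan is to first extend $\phi$ to an embedding of ordered abelian groups $\phi':\Gamma'\to\Gamma^*$ forgetting the contraction, and then observe that under the hypothesis $[\Gamma]=[\Gamma']$ the commutation of $\phi'$ with the precontraction is automatic via axiom $(4)$.

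By Lemma \ref{kul1}, the equality $[\Gamma]=[\Gamma']$ forces $\chi'(\Gamma'^{<0})=\chi(\Gamma^{<0})$: every nonzero $a\in\Gamma'$ is archimedean equivalent in $\Gamma'$ to some $b\in\Gamma^{\neq 0}$ of the same sign, and then axiom $(4)$ gives $\chi'(a)=\chi(b)$. Forgetting the function symbol $\chi$, both $\Gamma$ and $\Gamma'$ are nontrivial divisible ordered abelian groups and $\phi$ is an embedding in the language $\{+,-,0,<\}$. Since the theory of nontrivial divisible ordered abelian groups admits quantifier elimination and $\Gamma^*$ is $k$-saturated with $k>\card(\Gamma')$, a standard transfinite induction extends $\phi$ to an ordered group embedding $\phi':\Gamma'\to\Gamma^*$: enumerate $\Gamma'\setminus\Gamma$ and at each stage realize by saturation the unique quantifier-free $1$-type of the next element over the current image.

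It remains to verify $\phi'(\chi'(a))=\chi^*(\phi'(a))$ for every $a\in\Gamma'$. For $a=0$ this is axiom $(1)$. For $a\neq 0$ pick $b\in\Gamma^{\neq 0}$ with $[a]=[b]$ and $\sign(a)=\sign(b)$ as above. Because $\phi'$ is an ordered group embedding it preserves archimedean class and sign, so $[\phi'(a)]=[\phi(b)]$ in $[\Gamma^*]$ and $\sign(\phi'(a))=\sign(\phi(b))$. Applying axiom $(4)$ of precontraction groups in $(\Gamma',\chi')$ and in $(\Gamma^*,\chi^*)$, together with the fact that $\phi$ is already an embedding of precontraction groups, we obtain
\[
\phi'(\chi'(a))=\phi'(\chi(b))=\phi(\chi(b))=\chi^*(\phi(b))=\chi^*(\phi'(a)).
\]
The substantive step is really the ordered-group extension in the second paragraph, which is where the saturation hypothesis does all the work; once $\phi'$ exists as an ordered group embedding, the hypothesis $[\Gamma]=[\Gamma']$ reduces the $\chi$-condition to a computation performed inside $\Gamma$, where it holds because $\phi$ already respects $\chi$.
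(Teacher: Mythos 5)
Your proposal is correct and follows essentially the same route as the paper's own proof: extend $\phi$ to an ordered group embedding via quantifier elimination for divisible ordered abelian groups together with saturation, then use $[\Gamma]=[\Gamma']$ and axiom $(4)$ to transfer the $\chi$-commutation from $\Gamma$ to $\Gamma'$. Your write-up is, if anything, slightly more careful in spelling out why archimedean classes and signs are preserved under the extension.
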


\begin{proof}
Since $\Gamma, \Gamma'$ and $\Gamma^*$ are divisible ordered abelian groups and such theory has quantifier elimination, then by saturation of  $(\Gamma^*,\chi^*)$ there is an embedding $\phi':\Gamma'\rightarrow \Gamma^*$ that extends the embedding $\phi:\Gamma\rightarrow \Gamma^*$. Moreover, if $b\in \Gamma'$, because $[\Gamma]=[\Gamma']$, there is $a\in \Gamma$ such that $[a]=[b]$ and $\sign(a)=\sign(b)$. Thus,  $\chi'(b)=\chi'(a)$ and then 
\[\phi'(\chi'(b))=\phi'(\chi'(a))=\phi(\chi(a)),\]
but as $[\phi'(b)]=[\phi(a)]$ in $[\Gamma']$, then $\chi^*(\phi'(b))=\chi^*(\phi(a))$. Finally, since $\phi$ is an embedding of centripetal divisible precontraction groups, then 
\[\chi^*(\phi(a))=\phi(\chi(a))=\phi'(\chi'(b)).\]
\end{proof}

\subsubsection*{Case 2. $(\Gamma,\chi)\subseteq (\Gamma',\chi')$ with $[\Gamma]\neq [\Gamma']$ and $\chi(\Gamma^{<0})=\chi'(\Gamma'^{<0})$.}
From \cite[lemma 3.3]{Kuh2} we know that:\\

\begin{lemma}\label{kul2}
Let $(\Gamma,\chi)\subseteq (\Gamma',\chi')$ be precontraction groups. Let $a\in \Gamma'$ such that $[a]\notin [\Gamma]$ and $\chi'(a)=b\in \Gamma$. Then $(\Gamma+\mathbb{Z}a,\chi_{a})$ is a precontraction group with $\chi_{a}(\Gamma+\mathbb{Z}a)=\chi(\Gamma)\cup\lbrace b, -b\rbrace\subset G$. Moreover, the extension of $\chi$ from $(\Gamma,\chi)$ to $\Gamma+\mathbb{Z}a$ is uniquely determined by the assignment $\chi'(a)=b$.\\
\end{lemma}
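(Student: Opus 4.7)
The plan is to show that $\chi'$ restricts to a precontraction map $\chi_a$ on $\Gamma+\mathbb{Z}a$, and that this restriction is forced by the single value $\chi_a(a)=b$. The driving observation is that because $[a]\notin[\Gamma]$, every element of $\Gamma+\mathbb{Z}a$ has a unique representation $\gamma+na$ with $\gamma\in\Gamma$, $n\in\mathbb{Z}$, and whenever both $n\neq 0$ and $\gamma\neq 0$ the archimedean classes $[na]=[a]$ and $[\gamma]$ are strictly comparable.

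First I would compute $\chi'(\gamma+na)$ by cases. The case $n=0$ is trivial: $\chi'(\gamma)=\chi(\gamma)\in\chi(\Gamma)$. For $\gamma=0$, item (1) of Lemma \ref{cont1} together with axiom (3) forces $\chi'(na)=\sign(n)\chi'(a)=\sign(n)b$. The substantive case is $n\neq 0\neq \gamma$: since $[a]\notin[\Gamma]$, exactly one of $[na]>[\gamma]$ or $[na]<[\gamma]$ holds, so by Lemma \ref{cont2}(3) (the natural valuation version of ``strictly dominant term controls $\chi$''), we get $\chi'(\gamma+na)=\chi'(na)=\sign(n)b$ in the first case and $\chi'(\gamma+na)=\chi'(\gamma)=\chi(\gamma)$ in the second. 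When $\gamma$ and $na$ have opposite signs the same conclusion follows from Lemma \ref{cont1}(6).

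This case analysis yields $\chi'(\Gamma+\mathbb{Z}a)=\chi(\Gamma)\cup\{b,-b\}\subseteq \Gamma\subseteq \Gamma+\mathbb{Z}a$, so $\chi_a:=\chi'\!\upharpoonright_{\Gamma+\mathbb{Z}a}$ is a well-defined endomap of $\Gamma+\mathbb{Z}a$. Verifying the precontraction axioms for $\chi_a$ is then immediate: axioms (1)--(4) are universally quantified statements that hold for all elements of $\Gamma'$, so they persist on the subgroup $\Gamma+\mathbb{Z}a$.

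For uniqueness, suppose $\tilde\chi$ is any extension of $\chi$ to $\Gamma+\mathbb{Z}a$ making $(\Gamma+\mathbb{Z}a,\tilde\chi)$ a precontraction group with $\tilde\chi(a)=b$. Axiom (3) and the consequence of (4) recorded in Lemma \ref{cont1}(1) force $\tilde\chi(na)=\sign(n)b$, and then the same case analysis (now using Lemma \ref{cont1}(4)--(6) inside $\Gamma+\mathbb{Z}a$) forces $\tilde\chi(\gamma+na)$ to coincide with the value computed above for $\chi_a$. I expect no real obstacle: the only point requiring care is the mixed-sign case $\sign(\gamma)\neq\sign(na)$, which is handled precisely by Lemma \ref{cont1}(6) once one notes that $[a]\notin[\Gamma]$ rules out any cancellation inside a common archimedean class.
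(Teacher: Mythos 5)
The paper does not actually prove this lemma: it is quoted as a known result from Kuhlmann's \emph{Abelian groups with contractions I} (Lemma 3.3), so there is no in-paper argument to compare yours against. Your proof is correct and is essentially the standard one. The hypothesis $[a]\notin[\Gamma]$ guarantees that every element of $\Gamma+\mathbb{Z}a$ is written uniquely as $\gamma+na$ and that, when $\gamma\neq 0\neq n$, exactly one of the two summands strictly dominates in archimedean class; hence Lemma \ref{cont2}(3) (equivalently, axiom $(4)$ applied to $\gamma+na$ and its dominant summand, which are archimedean equivalent and of the same sign) pins down $\chi'(\gamma+na)$ as $\chi(\gamma)$ or $\sign(n)b$. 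Closure of the subgroup under $\chi'$ together with the universal form of the axioms (using Lemma \ref{cont1}(1) to replace axiom $(4)$ by $\chi(2x)=\chi(x)$) then yields the precontraction structure, and reading the same computation off the axioms alone gives uniqueness. One small caveat: your fallback appeal to Lemma \ref{cont1}(6) in the mixed-sign case is not self-sufficient, since its hypothesis compares $\chi(|\gamma|)$ with $\chi(|na|)$, and in a general precontraction group two elements in distinct archimedean classes may share the same $\chi$-value; this does no harm, because Lemma \ref{cont2}(3) (or axiom $(4)$ directly) already covers all sign configurations.
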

\noindent
If $\Gamma$ is divisible, $\Gamma+\mathbb{Q}a$ is the divisible hull of $\Gamma+\mathbb{Z}a$. Thus, by lemmas \ref{kul1} and \ref{kul2}  we have $[\Gamma+\mathbb{Q}a]=[\Gamma+\mathbb{Z}a]$ and the image under $\chi_a$ coincide. From this we have the following lemma:\\

\begin{lemma} \label{case2} Let $(\Gamma,\chi)\subseteq (\Gamma',\chi')$ be models of $T_{pdg}$ with $\chi(\Gamma^{<0})=\chi'(\Gamma'^{<0})$,  $a\in\Gamma'^{<0}$ such that $[a]\notin [\Gamma]$ and $C$ is the lower cut in $[\Gamma]$ defined by $[a]$. Then there is a model $(\Delta,\chi_{\Delta})$ of $T_{pdg}$ such that:
\begin{enumerate}
\item [$(1)$] $(\Gamma,\chi)\subset (\Delta,\chi_{\Delta}) \subseteq (\Gamma',\chi')$ with $[a]\in [\Gamma_{\Delta}]$, and
\item [$(2)$] for any embedding $\phi$ of $(\Gamma,\chi)$ into a model $(\Gamma^*,\chi^*)$ of $T_{pdg}$ and each $a'\in \Gamma^{*<0}$ with  $[a']\notin [\phi(\Gamma)]$ which realize the cut $\lbrace\phi(x):x\in C \rbrace$, there is a unique embedding $\phi': (\Delta,\chi_{\Delta})\rightarrow (\Gamma^*,\chi^*)$ that extends $\phi$ with $\phi'(a)=a'$.
\end{enumerate}
\end{lemma}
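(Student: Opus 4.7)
The strategy is to take $\Delta := \Gamma + \mathbb{Q}a \subseteq \Gamma'$ with $\chi_{\Delta} := \chi'|_{\Delta}$, and set $b := \chi'(a)$. By the hypothesis $\chi'(\Gamma'^{<0}) = \chi(\Gamma^{<0})$ we have $b \in \chi(\Gamma^{<0}) \subseteq \Gamma$. Applying Lemma \ref{kul2} to the pair $\Gamma \subseteq \Gamma + \mathbb{Z}a$ with assigned value $\chi'(a) = b$ gives a centripetal precontraction on $\Gamma + \mathbb{Z}a$ with image $\chi(\Gamma) \cup \{b, -b\} = \chi(\Gamma)$, and Lemma \ref{kul1} extends it uniquely to the divisible hull $\Delta$ with the same image $\chi(\Gamma) \subseteq \Delta$. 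Verifying that $(\Delta, \chi_{\Delta}) \models T_{pdg}$ is then immediate: divisibility by construction, centripetality via Lemma \ref{kul1}, and the remaining axioms carry over because $\chi_{\Delta}(\Delta^{<0}) = \chi(\Gamma^{<0})$. With $a \in \Delta \setminus \Gamma$ this proves (1).

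For (2), since $[a] \notin [\Gamma]$, every element of $\Delta$ has a unique expression $qa + \gamma$ with $q \in \mathbb{Q}$, $\gamma \in \Gamma$, and the analogous uniqueness holds on the target via $[a'] \notin [\phi(\Gamma)]$. Define $\phi'(qa + \gamma) := qa' + \phi(\gamma)$. This is a well-defined additive extension of $\phi$ with $\phi'(a) = a'$, and uniqueness among $\mathbb{Q}$-linear extensions is automatic. Order preservation is obtained by a case analysis on archimedean dominance: the sign of a nonzero $qa + \gamma$ is controlled by whether $[qa] > [\gamma]$ or $[qa] < [\gamma]$ in $[\Gamma']$, and since $[a']$ realizes the cut $\phi(C)$ in $[\phi(\Gamma)]$, the parallel comparison of $[qa']$ with $[\phi(\gamma)]$ in $[\Gamma^*]$ yields the same sign.

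The main obstacle is verifying that $\phi'$ intertwines $\chi_{\Delta}$ with $\chi^*$. For $x \in \Gamma$ this is the embedding property of $\phi$; the critical case is $x = qa + \gamma$ with $q \neq 0$ and $[qa] > [\gamma]$, where axioms (3) and (4) of precontraction groups give $\chi_{\Delta}(x) = \chi_{\Delta}(qa) = \sign(q)\,b$, whereas $\chi^*(\phi'(x)) = \chi^*(qa') = \sign(q)\,\chi^*(a')$. Thus the entire question reduces to the identity $\chi^*(a') = \phi(b)$. I plan to establish this by combining the monotonicity bounds supplied by Lemma \ref{cont2}(1) --- namely $\phi(\chi(\gamma')) \leq \chi^*(a') \leq \phi(\chi(\gamma))$ for all $\gamma \in \Gamma^{<0}$ with $[\gamma] \in C$ and all $\gamma' \in \Gamma^{<0}$ with $[\gamma'] \notin C$ --- with the observation that $\phi$ preserves the immediate-successor action of $\chi$ on the discrete set $\chi(\Gamma^{<0})$ (axiom (2) of $T_{pdg}$); because $\phi(b)$ satisfies the same bounds and consecutive elements of $\phi(\chi(\Gamma^{<0}))$ remain consecutive inside $\chi^*(\Gamma^{*<0})$, the discrete structure should squeeze $\chi^*(a')$ to coincide precisely with $\phi(b)$. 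The symmetric subcase $[qa] < [\gamma]$ and the case $q = 0$ each reduce to the embedding property of $\phi$ on $\Gamma$.
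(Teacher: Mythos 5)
Your construction of $(\Delta,\chi_{\Delta})$ and your proof of item (1) coincide with the paper's: $\Delta=\Gamma+\mathbb{Q}a$, $\chi_{\Delta}=\chi'|_{\Delta}$, with Lemmas \ref{kul1} and \ref{kul2} guaranteeing that the image is still $\chi(\Gamma)$ and that the axioms of $T_{pdg}$ carry over. For item (2) you go further than the paper, whose proof stops after constructing $\Delta$ and never discusses the embedding; you correctly isolate the single identity on which everything hinges, namely $\chi^*(a')=\phi(b)$ with $b=\chi'(a)$.

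That identity is exactly where there is a genuine gap, and the squeeze you propose cannot close it. Put $L=\{\chi(\gamma):\gamma\in\Gamma^{<0},\,[\gamma]>[a]\}$ and $U=\{\chi(\gamma):\gamma\in\Gamma^{<0},\,[\gamma]<[a]\}$, so that $L\cup U=\chi(\Gamma^{<0})$, monotonicity gives $L\leq b\leq U$, and the cut hypothesis on $a'$ gives $\phi(L)\leq\chi^*(a')\leq\phi(U)$. Unless $b$ happens to lie in both $L$ and $U$, these bounds together with the successor axiom of $T_{pdg}$ only confine $\chi^*(a')$ to a pair of \emph{consecutive} elements of $\chi^*(\Gamma^{*<0})$, namely $\{\phi(b),\phi(\chi(b))\}$ or $\{\phi(\chi^{-1}(b)),\phi(b)\}$, and the archimedean cut of $a'$ cannot decide between them. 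Concretely, with $\Gamma=\oplus_{i}\mathbb{Q}e_i$ as in the paper's example, let $\Gamma^{*}=\mathbb{Q}e_1\oplus\mathbb{Q}a_1\oplus\mathbb{Q}a_2\oplus\mathbb{Q}e_2\oplus\cdots$ ordered lexicographically with $a_1,a_2<0$, and extend $\chi_{\mathbb{Q}}$ by $\chi^{*}(a_1)=-e_2$ and $\chi^{*}(a_2)=-e_3$; this is a model of $T_{pdg}$ extending $\Gamma$ with the same image, and $a_1$ and $a_2$ realize the same archimedean cut over $\Gamma$ while having different $\chi^{*}$-values, so at most one of them can serve as $\phi'(a)$. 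Hence item (2) is not provable as stated: the hypothesis on $a'$ must be strengthened to include $\chi^{*}(a')=\phi(\chi'(a))$ (which is what saturation actually supplies in the application, case (2) of the proof of Theorem \ref{modc}), or ``realizes the cut'' must be read as realizing the full quantifier-free type of $a$ over $\Gamma$. With that extra hypothesis the rest of your verification --- well-definedness and uniqueness of $\phi'$, order preservation via archimedean dominance, and commutation with $\chi$ in the cases $q=0$ and $[qa]<[\gamma]$ --- is correct and complete, and is more detailed than what the paper records.
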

\begin{proof} Let $a\in \Gamma'^{<0}$ with $[a]\notin [\Gamma]$ and $b=\chi'(a)\in \Gamma$. We define $(\Delta,\chi_{\Delta})=(\Gamma+ \mathbb{Q}a,\chi'_{a})$ where $\chi'_{a}$ is the restriction of $\chi'$ to $\Gamma+ \mathbb{Q}a$. As $\chi(\Gamma)=\chi'(\Gamma)$ then $\chi_{\Delta}(\Delta)=\chi(\Gamma)$, so $\chi_{\Delta}(\Delta)$ is a centripetal divisible precontraction group. 
\end{proof}

\subsubsection*{Case 3. $(\Gamma,\chi)\subseteq (\Gamma',\chi')$ with $\chi(\Gamma^{<0})\neq \chi'(\Gamma'^{<0})$.}
As we saw above if $(\Gamma,\chi)\subseteq (\Gamma',\chi')$ are model of $T_{pdg}$  and $\chi(\Gamma^{<0})\neq \chi'(\Gamma'^{<0})$, we have two cases. First, we can have that there is $b\in \chi'(\Gamma'^{<0})$ such that $b>\chi(\Gamma^{<0})$. So we want to extend $(\Gamma,\chi)$ to a model $(\Delta,\chi_{\Delta})$ of $T_{pdg}$ in which $b\in \chi_{\Delta}(\Delta^{<0})$. To do that, we can observe that if $b>\chi(\Gamma^{<0})$, then $\chi'^k(b)>\chi(\Gamma^{<0})$ for any integer $k$, where $\chi^{n+1}(b)=\chi(\chi^{n}(b))$, $\chi^0(b)=b$ and $\chi^{-n}(b)=c$ means that $\chi^n(c)=b$. Thus, to define the model $(\Delta,\chi_{\Delta})$ we need to add a copy of $\mathbb{Z}$ at the end of $\Gamma^{<0}$. Specifically, we have:\\

\begin{lemma} \label{kul3}
Let $(\Gamma,\chi)\subseteq (\Gamma',\chi')$ be divisible centripetal precontraction groups and $(b_n)_{n\geq 0}$ a family in $\chi(\Gamma'^{<0})$ such that $b_{n+1}=\chi'(b_n)$ and $b_n>\chi(\Gamma)$ for all $n\geq 0$, then there is a divisible centripetal precontraction group $(\Gamma'',\chi'')$ such that:
\begin{enumerate}
\item [$(1)$] $(\Gamma,\chi)\subset (\Gamma'',\chi'') \subseteq (\Gamma',\chi')$ with $b_n\in \chi''(\Gamma''^{<0})$ for $n>1$, and
\item  [$(2)$] for any embedding $\phi$ of $(\Gamma,\chi)$ into a divisible centripetal precontraction group $(\Gamma^*,\chi^*)$ and any family $(b_n')_{n\geq 0}$ in $\chi^*(\Gamma^{*<0})$ such that $b_{n+1}'=\chi^*(b_n')$ and $b_n>\phi(\chi(\Gamma^{<0}))$ for $n\geq 0$, there is a unique embedding $\phi': (\Gamma'',\chi'')\rightarrow (\Gamma^*,\chi^*)$ which extends $\phi$ and such that $\phi'(b_n)=b_n'$ for all $n$.

\end{enumerate}

\end{lemma}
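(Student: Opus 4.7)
The plan is to adjoin the sequence $(b_n)_{n\geq 0}$ to $\Gamma$ as a $\mathbb{Q}$-linearly independent block while keeping $\chi'$ as the operation. Concretely, set
\[\Gamma'' := \Gamma + \sum_{n\geq 0}\mathbb{Q} b_n \subseteq \Gamma', \qquad \chi'' := \chi'|_{\Gamma''}.\]
What must be checked is that $\chi'$ sends $\Gamma''$ into itself, that the sum defining $\Gamma''$ is direct, and that the resulting pair $(\Gamma'',\chi'')$ enjoys the universal property in (2).

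The main technical step is an archimedean-class comparison: for every $n\geq 0$ and every nonzero $g\in\Gamma$ one has $[b_n] < [g]$, and also $[b_{n+1}] < [b_n]$. The second is immediate from Lemma \ref{cont2}(4) applied to $b_{n+1} = \chi'(b_n)$. For the first, set $g' := -|g| \in \Gamma^{<0}$; the hypothesis $b_n > \chi(\Gamma^{<0})$ gives $\chi(g') < b_n < 0$, so $|b_n| < |\chi(g')|$ and hence $[b_n] \leq [\chi(g')]$, while centripetality of $(\Gamma,\chi)$ gives $[\chi(g')] < [g'] = [g]$. This is precisely where the hypothesis $b_n > \chi(\Gamma^{<0})$ combines with centripetality to push each $b_n$ strictly below every nonzero element of $\Gamma$ in archimedean class; it is the main obstacle, and the rest of the proof is bookkeeping around it.

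With these strict inequalities, Lemma \ref{cont2}(3) yields an explicit formula for $\chi'$ on $\Gamma''$: for $a = g + \sum_i q_i b_i$ with $g\in\Gamma$ and finitely many $q_i\in\mathbb{Q}$,
\[\chi'(a) = \chi(g) \text{ if } g\neq 0, \qquad \chi'(a) = \sign(q_k)\,b_{k+1} \text{ if } g=0 \text{ and } k = \min\{i : q_i \neq 0\},\]
using axioms $(3)$ and $(4)$ of precontraction groups. In particular $\chi'(\Gamma'')\subseteq\Gamma''$, so $\chi''$ is well defined. The same dominant-term analysis shows that $\{b_n\}$ is $\mathbb{Q}$-linearly independent over $\Gamma$: any non-trivial relation $g + \sum q_i b_i = 0$ would equate two sides of distinct archimedean class. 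Thus $\Gamma'' = \Gamma \oplus \bigoplus_n \mathbb{Q} b_n$ is divisible, $\chi''$ inherits the precontraction axioms and centripetality from $\chi'$, and $b_n = \chi''(b_{n-1}) \in \chi''(\Gamma''^{<0})$ for $n\geq 1$. This proves (1).

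For (2), define $\phi' : \Gamma'' \to \Gamma^*$ by
\[\phi'\Big(g + \sum_i q_i b_i\Big) := \phi(g) + \sum_i q_i b_i'.\]
The hypothesis $b_n' > \phi(\chi(\Gamma^{<0}))$ in $(\Gamma^*,\chi^*)$ triggers the same archimedean comparison there, making $\{b_n'\}$ linearly independent over $\phi(\Gamma)$, so $\phi'$ is a well-defined injective $\mathbb{Q}$-linear map. Order preservation reduces to matching signs of dominant terms, which $\phi$ respects; the intertwining $\phi'\circ\chi'' = \chi^*\circ\phi'$ follows by applying the dominant-term formula on both sides together with $\phi\circ\chi = \chi^*\circ\phi$. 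Uniqueness is forced by the requirements $\phi'|_\Gamma = \phi$ and $\phi'(b_n) = b_n'$, since any abelian-group homomorphism from a divisible group into a torsion-free group is automatically $\mathbb{Q}$-linear.
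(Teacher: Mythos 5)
Your proof is correct, and it reaches the same group $\Gamma''=\Gamma+\sum_{n}\mathbb{Q}b_n$ with $\chi''=\chi'|_{\Gamma''}$ that the paper constructs, but by a genuinely different route. The paper builds $\Gamma''$ as the union of the chain $\Gamma_0=\Gamma$, $\Gamma_{n+1}=\Gamma_n+\mathbb{Q}b_n$, delegating each one-step adjunction and each one-step extension of the embedding to Lemma \ref{kul2}, and then passing to the union. You instead do the whole construction in one shot, making explicit the archimedean-class facts that Lemma \ref{kul2} encapsulates: the comparison $[b_{n+1}]<[b_n]<[g]$ for all nonzero $g\in\Gamma$ (correctly derived from $b_n>\chi(\Gamma^{<0})$ --- the intended reading of the hypothesis $b_n>\chi(\Gamma)$ --- together with centripetality via Lemma \ref{cont2}(4)), the resulting dominant-term formula for $\chi'$ on $\Gamma''$ via Lemma \ref{cont2}(3) and axioms $(3)$--$(4)$, and the $\mathbb{Q}$-linear independence of $\{b_n\}$ over $\Gamma$. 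What your approach buys is self-containedness and a concrete normal form $g+\sum q_ib_i$ on which $\chi''$, the order, and the embedding $\phi'$ are all computed explicitly; it also sidesteps a small awkwardness in the chain argument, namely that the intermediate stages $\Gamma_n$ are not themselves closed under $\chi'$ (since $\chi'(b_{n-1})=b_n\notin\Gamma_n$), so Lemma \ref{kul2} only applies there in a loose sense and closure is really only recovered at the union. What the paper's route buys is brevity and reuse of Kuhlmann's lemma, which matters since the same pattern recurs in Lemmas \ref{kul4} and \ref{case4}. Your observation that $b_n\in\chi''(\Gamma''^{<0})$ already for $n\geq 1$ (rather than $n>1$) matches what the construction actually gives.
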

\begin{proof} Let $((\Gamma_i,\chi_i)_{i\geq 0})$ be the family given by $\Gamma_0=\Gamma$, $\Gamma_{n+1}=\Gamma_n+\mathbb{Q}b_n$ and $\chi_n$ the restriction of $\chi'$ to $\Gamma_n$. By lemma \ref{kul2} $(\Gamma_n, \chi_n)$ is a divisible precontraction group for each $n$ and since $(\Gamma_n,\chi_n)\subseteq (\Gamma_{n+1},\chi_n)$ and $\chi'(b_n)=b_{n+1}$ then $(\Gamma'',\chi'')=\cup_{i\geq 0} (\Gamma_i,\chi_i)\subseteq (\Gamma',\chi')$ is a divisible centripetal precontraction group which extends $(\Gamma,\chi)$.
Now, by induction if we assume that $\phi_n:(\Gamma_n,\chi_n)\rightarrow (\Gamma^*,\chi^*)$ is an embedding such that $\phi_n(b_i)=b_i'$ for $i\in \lbrace 0,1,...,b_{n-1}\rbrace$, then by lemma \ref{kul2} there is a unique embedding 
\[\phi_{n+1}:(\Gamma_{n+1},\chi_{n+1})\rightarrow (\Gamma^*,\chi^*)\] 
which extends $\phi_n$ and such that $\phi_{n+1}(b_{n})=b_n'$. Thus, there is a unique embedding 
\[\phi'=\cup_{i\geq 0}\phi_i:(\Gamma'',\chi'') \rightarrow (\Gamma^*,\chi^*)\] which satisfies the required properties.
\end{proof}
\noindent
Now, we use the above lemma to include the predecessors of the element $b_0$ of the family:\\

\begin{lemma}\label{kul4}
Let $(\Gamma,\chi)\subseteq (\Gamma',\chi')$ be divisible centripetal precontraction groups and $(b_k)_{k\in \mathbb{Z}}$ a family in $\chi(\Gamma'^{<0})$ such that $b_{k+1}=\chi'(b_k)$ and $b_k>\chi(\Gamma)$ for all $k\in \mathbb{Z}$, then there is a divisible centripetal precontraction group $(\Delta,\chi_{\Delta})$ such that:
\begin{enumerate}
\item [$(1)$] $(\Gamma,\chi)\subset (\Delta,\chi_{\Delta}) \subseteq (\Gamma',\chi')$ with $b_k\in \chi_{\Delta}(\Delta^{<0})$ for all $k\in \mathbb{Z}$, and

\item  [$(2)$] for any embedding $\phi$ of $(\Gamma,\chi)$ into a divisible centripetal precontraction group $(\Gamma^*,\chi^*)$ and any family $(b_k')_{k\in \mathbb{Z}}$ in $\chi^*(\Gamma^{*<0})$ such that $b_{k+1}'=\chi^*(b_k')$ and $b_k>\phi(\chi(\Gamma^{<0}))$ for $k\in \mathbb{Z}$, there is a unique embedding $\phi': (\Delta,\chi_{\Delta})\rightarrow (\Gamma^*,\chi^*)$ which extends $\phi$ and such that $\phi'(b_k)=b_k'$ for all $k$.

\end{enumerate}
\end{lemma}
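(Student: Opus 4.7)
The plan is to mirror the proof of Lemma~\ref{kul3}, using that lemma to adjoin the forward part $(b_n)_{n\geq 0}$ of the orbit and then iteratively applying Lemma~\ref{kul2} to adjoin the predecessors $b_{-1}, b_{-2}, \ldots$ one at a time. First I would apply Lemma~\ref{kul3} to $(\Gamma,\chi) \subseteq (\Gamma',\chi')$ and $(b_n)_{n\geq 0}$ to obtain a divisible centripetal precontraction group $(\Delta_0, \chi_{\Delta_0})$ with $(\Gamma,\chi) \subseteq (\Delta_0,\chi_{\Delta_0}) \subseteq (\Gamma',\chi')$ and $b_n \in \chi_{\Delta_0}(\Delta_0^{<0})$ for all $n\geq 0$. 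Then I would build an ascending chain $(\Delta_k,\chi_{\Delta_k})_{k\geq 0}$ inside $(\Gamma',\chi')$ by setting $\Delta_k := \Delta_{k-1} + \mathbb{Q} b_{-k}$ with $\chi_{\Delta_k} := \chi'|_{\Delta_k}$ for $k\geq 1$; at each step Lemma~\ref{kul2} applies because $\chi'(b_{-k}) = b_{-(k-1)} \in \Delta_{k-1}$ by construction. Taking $(\Delta,\chi_\Delta) := \bigcup_{k\geq 0}(\Delta_k,\chi_{\Delta_k})$ then yields the required divisible centripetal precontraction group with $b_k \in \chi_\Delta(\Delta^{<0})$ for every $k\in\mathbb{Z}$, establishing part~(1).

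For part~(2), I would leverage the universal properties of the same two lemmas. The universality clause of Lemma~\ref{kul3} provides a unique embedding $\phi_{\Delta_0} \colon (\Delta_0,\chi_{\Delta_0}) \to (\Gamma^*,\chi^*)$ extending $\phi$ with $\phi_{\Delta_0}(b_n) = b_n'$ for all $n\geq 0$. Then, inductively on $k\geq 1$, the uniqueness and extension clauses of Lemma~\ref{kul2} (applied in $(\Gamma^*,\chi^*)$ with $a = b_{-k}$ and $a' = b_{-k}'$) yield a unique extension $\phi_{\Delta_k} \colon (\Delta_k,\chi_{\Delta_k}) \to (\Gamma^*,\chi^*)$ with $\phi_{\Delta_k}(b_{-k}) = b_{-k}'$. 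Setting $\phi' := \bigcup_{k\geq 0}\phi_{\Delta_k}$ gives the unique embedding claimed.

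The main obstacle is verifying the hypothesis of Lemma~\ref{kul2} at each inductive step, namely that $[b_{-k}] \notin [\Delta_{k-1}]$ (and, symmetrically, that $[b_{-k}'] \notin [\phi_{\Delta_{k-1}}(\Delta_{k-1})]$ in $\Gamma^*$). I would argue by contradiction: if $[b_{-k}] = [\gamma]$ for some nonzero $\gamma\in\Delta_{k-1}$ with $\sign(\gamma) = \sign(b_{-k}) = -1$, then axiom~(4) of precontraction groups forces $\chi'(\gamma) = \chi'(b_{-k}) = b_{-(k-1)}$. By the inductive structure of $\Delta_{k-1}$, whose archimedean classes decompose as $[\Gamma] \cup \{[b_j] : j\geq -(k-1)\}$ (these two sets being disjoint, since the hypothesis $b_{j+1} > \chi(\Gamma)$ together with axiom~(4) prevents any $[b_j]$ from already lying in $[\Gamma]$), one of two cases occurs: either $[\gamma] \in [\Gamma]$, forcing $\chi'(\gamma) \in \chi(\Gamma)$ and contradicting $b_{-(k-1)} > \chi(\Gamma)$; or $[\gamma] = [b_j]$ for some $j\geq -(k-1) > -k$, which contradicts the strict decrease of archimedean class under $\chi'$ in any centripetal precontraction group. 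This delicate interplay between the hypothesis $b_k > \chi(\Gamma)$ and centripetality is the essential ingredient that makes the inductive extension go through.
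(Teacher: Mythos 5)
Your proposal is correct and follows essentially the same route as the paper: both construct the increasing chain of subgroups $\Gamma+\sum_{j\geq -n}\mathbb{Q}b_j$ inside $\Gamma'$ and take its union, the only cosmetic difference being that the paper obtains each link by re-applying Lemma \ref{kul3} to the shifted family $(b_{-n+i})_{i\geq 0}$, whereas you adjoin one predecessor at a time via Lemma \ref{kul2} (which is exactly what the proof of Lemma \ref{kul3} does internally). Your explicit check that $[b_{-k}]\notin[\Delta_{k-1}]$, using $b_j>\chi(\Gamma)$ together with centripetality, verifies a hypothesis of Lemma \ref{kul2} that the paper's proof leaves implicit, so it is a welcome addition rather than a deviation.
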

 \begin{proof} First for each $n\in \mathbb{N}$ we define the family $(a_i^n)_{i\geq 0}$ where  $a_{i}^n=b_{-n+i}$ for $i\geq 0$. Clearly, we have that $a_{i+1}^n=\chi'(a_i^n)$ and $a_{i+1}^{n+1}=a_{i}^n$. Now, using the lemma \ref{kul3} for each family $(a_i^n)_{i\geq 0}$ we obtain a divisible centripetal precontraction group $(\Gamma''_n,\chi''_n)$ such that $a_{i+1}^n\in \chi''_n((\Gamma''_n)^{<0})$ and $\chi''_n(a_i^n)=a_{i+1}^{n}$ and a unique embedding $\psi_n:(\Gamma''_n,\chi''_n)\rightarrow (\Gamma''_{n+1},\chi''_{n+1})$ such that $\psi_n(a_i^n)=a_{i+1}^{n+1}$. Thus we obtain the increasing chain
\[(\Gamma,\chi)\subset (\Gamma''_0,\chi''_0)\subset (\Gamma''_1,\chi''_1) \subset...\]
and we define $(\Delta,\chi_{\Delta})=\cup_{n\geq 0}(\Gamma''_{n},\chi''_{n})$.

Now, if $\phi:(\Gamma,\chi)\rightarrow \chi^*(\Gamma^{*<0})$ is an embedding with $(b_k')_{k\in \mathbb{Z}}$ a family in $\chi^*(\Gamma^{*<0})$ such that $b_{k+1}'=\chi^*(b_k')$ and $b_k>\phi(\chi(\Gamma^{<0}))$ for $k\in \mathbb{Z}$, then by lemma \ref{kul3} there is a unique embedding $$\phi_n:(\Gamma''_n,\chi''_n)\rightarrow (\Gamma^*,\chi^*)$$ 
\noindent
that extends $\phi$ and such that $\phi_n(a_{i}^n)=\phi_n(b_{-n+i})=b'_{-n+i}$. Moreover, $\phi_{n}\subseteq \phi_{n+1}$ because 
\[\phi_{n+1}(a_i^n)=\phi_{n+1}(a_{i+1}^{n+1})=b'_{-(n+1)+i+1}=b'_{-n+i}=\phi_n(a_i^n).\]
Thus we have that $\phi'=\cup\phi_n$ is the unique embedding from $(\Delta,\chi_{\Delta})$ into $(\Gamma^*,\chi^*)$ that extends $\phi$ and such that $\phi'(b_k)=b'_k$ for all $k\in \mathbb{Z}$.

\end{proof}
\noindent
On the other hand, if $(\Gamma,\chi)\subseteq (\Gamma',\chi')$ are models of $T_{pdg}$, $\chi(\Gamma^{<0})\neq \chi'(\Gamma'^{<0})$ and there is a nonempty special cut $G$ in $\chi(\Gamma^{<0})$ and $b\in \chi(\Gamma'^{<0})\setminus \chi(\Gamma^{<0})$ such that  $G<b<\Gamma^{>G}$, then there is a family $(b_k)_{k\in \mathbb{Z}}$ in $\chi'(\Gamma'^{<0})$ such that $G<b_k<\Gamma^{>G}$ and $b_{k+1}=\chi'(b_k)$. So, in order to extend $(\Gamma,\chi)$ to a model $(\Delta,\chi_{\Delta})$ of $T_{pdg}$ in which $b\in \chi_{\Delta}(\Delta^{<0})$ we have to add a copy of $\mathbb{Z}$ between some specific elements of $\chi(\Gamma)$. \\ 

\begin{lemma} \label{case4}
Let $(\Gamma,\chi)\subseteq (\Gamma',\chi')$ be divisible centripetal precontraction groups, $G$ be a nonempty special cut in $\chi(\Gamma^{<0})$ and $(b_k)_{k\in \mathbb{Z}}$ be a family in $\chi'(\Gamma'^{<0})$ such that $G<b_k<\Gamma^{>G}$ with $b_{k+1}=\chi'(b_k)$, then there is a divisible centripetal precontraction group $(\Delta_G,\chi_{G})$ such that:
\begin{enumerate}
\item [$(1)$] $(\Gamma,\chi)\subset (\Delta_G,\chi_{G}) \subseteq (\Gamma',\chi')$ with $b_k\in \chi_G(\Delta_G^{<0})$ for all $k\in \mathbb{Z}$, and
\item [$(2)$]  for any embedding $\phi$ of $(\Gamma,\chi)$ into a divisible centripetal precontraction group $(\Gamma^*,\chi^*)$ and any family $(b_k')_{k\in \mathbb{Z}}$ in $\chi^*(\Gamma^{*<0})$ such that $b_{k+1}'=\chi^*(b_k')$ and $\phi(G)<b_k'<\phi(\Gamma^{>G})$, there is a unique embedding $\phi': (\Delta_G,\chi_{G})\rightarrow (\Gamma^*,\chi^*)$ which extends $\phi$ and such that $\phi'(b_k)=b_k'$ for all $k$.
\end{enumerate}
\end{lemma}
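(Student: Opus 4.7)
The proof mirrors that of lemma \ref{kul4}, with adjustments to handle that the family $(b_k)_{k\in \mathbb{Z}}$ lies in the ``hole'' $G<b_k<\Gamma^{>G}$ rather than above all of $\chi(\Gamma)$. The essential new ingredient is an analog of lemma \ref{kul3} adapted to this cut setting: for one-sided families $(b_n)_{n\geq 0}$ satisfying $G<b_n<\Gamma^{>G}$ and $b_{n+1}=\chi'(b_n)$, there exists a divisible centripetal precontraction group $(\Gamma'',\chi'')$ with $\Gamma\subset \Gamma''\subseteq \Gamma'$ and $b_n\in \chi''(\Gamma''^{<0})$ for $n\geq 1$, together with the expected universal property for embeddings into a divisible centripetal precontraction group $(\Gamma^*,\chi^*)$ equipped with a matching family $(b_n')_{n\geq 0}$ in the corresponding cut.

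To prove this analog, I would follow the structure of the proof of lemma \ref{kul3}: set $\Gamma_0=\Gamma$, $\Gamma_{n+1}=\Gamma_n+\mathbb{Q}b_n$, and let $\chi_n$ be the restriction of $\chi'$ to $\Gamma_n$. At each step the verification required to invoke lemma \ref{kul2} is that $[b_n]\notin [\Gamma_n]$. This is where the special cut hypothesis $\chi(G)\subseteq G$ becomes essential: together with $G<b_n<\Gamma^{>G}$ and the centripetal property (which forces $v(b_{n+1})>v(b_n)$), it guarantees that each $[b_n]$ falls in a genuine archimedean gap unreachable by elements of $\Gamma\cup\lbrace b_0,\ldots,b_{n-1}\rbrace$ or their $\mathbb{Q}$-combinations, since any such combination is archimedean-equivalent to its dominant summand by lemma \ref{cont2}.

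With this analog of lemma \ref{kul3} in place, the remainder of the argument follows lemma \ref{kul4} verbatim. For each $n\in \mathbb{N}$ define $(a_i^n)_{i\geq 0}$ by $a_i^n=b_{-n+i}$, so that $a_{i+1}^n=\chi'(a_i^n)$ and $a_{i+1}^{n+1}=a_i^n$. Applying the cut analog to each such family yields $(\Gamma''_n,\chi''_n)$ together with unique embeddings $\psi_n:(\Gamma''_n,\chi''_n)\hookrightarrow (\Gamma''_{n+1},\chi''_{n+1})$ sending $a_i^n$ to $a_{i+1}^{n+1}$; then set $(\Delta_G,\chi_G)=\bigcup_n (\Gamma''_n,\chi''_n)$, which lies inside $(\Gamma',\chi')$ and contains each $b_k$ in $\chi_G(\Delta_G^{<0})$.

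For part $(2)$, given $\phi$ and a matching family $(b_k')_{k\in \mathbb{Z}}$, the universal property of each $(\Gamma''_n,\chi''_n)$ produces a compatible chain $\phi_n:(\Gamma''_n,\chi''_n)\to (\Gamma^*,\chi^*)$ with $\phi_n(a_i^n)=b'_{-n+i}$; uniqueness forces $\phi_{n+1}\supseteq \phi_n$ exactly as in lemma \ref{kul4}, and $\phi'=\bigcup_n \phi_n$ is the required unique embedding. The main obstacle is the cut analog of lemma \ref{kul3}: one must verify that the successive extensions $\Gamma_n\subset \Gamma_{n+1}$ neither bridge the gap between $G$ and $\Gamma^{>G}$ nor create an archimedean class already represented in $\Gamma$. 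The hypothesis $\chi(G)\subseteq G$ is precisely what preserves this invariance through the iteration.
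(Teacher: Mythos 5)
Your proof is correct in substance, but it takes a noticeably longer route than the paper. The paper's entire proof is one line: set $\Delta_G=\Gamma+\oplus_{k\in \mathbb{Z}}\mathbb{Q}b_k$ and let $\chi_{G}$ be the restriction of $\chi'$ to $\Delta_G$ (followed by a remark noting that one may equally generate $\Delta_G$ by the elements $a_k=b_k-a$ for a fixed $a\in\chi(\Gamma^{<0})\setminus G$, so that $b_k=\chi'(b_{k-1})\in\chi_G(\Delta_G^{<0})$). You instead re-run the whole machinery of Lemmas \ref{kul3} and \ref{kul4} inside the cut, building the group as a union of one-step extensions via Lemma \ref{kul2}. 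The two constructions produce the same group, so nothing is wrong; what your version buys is an explicit justification of the universal property in part $(2)$, which the paper's one-line proof simply does not address, and an identification of where the hypotheses enter. One refinement to your key step: the cleanest reason that $[b_n]\notin[\Gamma_n]$ is axiom $(4)$ of precontraction groups rather than a valuation-gap argument --- if $[b_n]=[\gamma]$ for some negative $\gamma\in\Gamma$ then $b_{n+1}=\chi'(b_n)=\chi(\gamma)\in\Gamma$, contradicting $G<b_{n+1}<\Gamma^{>G}$ (which forces $b_{n+1}\notin\Gamma$), and if $[b_n]=[b_i]$ for $i<n$ then $b_{n+1}=b_{i+1}$, contradicting the strict monotonicity $b_k<\chi'(b_k)=b_{k+1}$ given by centripetality. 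Note also that, exactly as in the paper's own proof of Lemma \ref{kul3}, the intermediate groups $\Gamma_n$ in your chain are not themselves closed under $\chi'$ (since $\chi'(b_n)=b_{n+1}\notin\Gamma_n$); only the union is, so the invocation of Lemma \ref{kul2} at each finite stage should be read as producing the ordered-group extension together with the forced values of $\chi'$, with the precontraction-group structure only certified on the union.
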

\begin{proof} It is enough to take $\Delta_G=\Gamma+\oplus_{k\in \mathbb{Z}}\mathbb{Q}b_k$ and $\chi_{G}$ the restriction of $\chi'$ to $\Delta_G$. 
\end{proof}
\noindent
Under the hypothesis of the above lemma, for any element $a\in \Gamma^{<0}\setminus G$ we have $b_k<a<0$ for all $k\in \mathbb{Z}$, and by item 2 of  lemma \ref{cont1}, we obtain
\[\chi'(b_k-a)=b_k.\]
Thus, taking $a_k=b_k-a$ we can define $\Delta_G=\Gamma+\oplus_{k\in \mathbb{Z}}\mathbb{Q}a_k$ and $\chi_{G}$ the restriction of $\chi'$ to $\Delta_G$, with $b_k\in \chi_{G}(\Delta_{G}^{<0})$.

\subsection{Model completeness of $T_{pdg}$}

To prove the model completeness of $T_{pdg}$ we use the following result (see \cite[Corollary B.10.4.]{Van6}):\\

\begin{lemma} The following are equivalent:
\begin{enumerate}
\item [$(1)$] $\Sigma$ is model complete;
\item [$(2)$] for all models $M, N$ of $\Sigma$ with $M\subseteq N$ and every elementary extension $M^*$ of $M$ that is $k$-saturated for some $k>\card(N)$, there is an embedding $N\rightarrow M^*$ that extends the natural inclusion $M\rightarrow M^*$.\\

\end{enumerate}
\end{lemma}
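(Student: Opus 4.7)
The plan is to prove the two implications with two classical model-theoretic tools: realization of types in sufficiently saturated structures for $(1) \Rightarrow (2)$, and Robinson's test characterizing model completeness via existential closedness of substructures for $(2) \Rightarrow (1)$.

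For $(1) \Rightarrow (2)$: Assume $\Sigma$ is model complete, so every embedding between models is elementary, and in particular the inclusion $M \subseteq N$ gives $M \preceq N$. Fix an enumeration $\bar{b} = (b_i)_{i < \card(N)}$ of $N$ and let $p(\bar{x})$ denote the complete type of $\bar{b}$ over $M$ computed in $N$, viewed as a collection of $L(M)$-formulas in a variable tuple $\bar{x}$ of length $\card(N)$. Since $M \preceq N$, the type $p$ is consistent with the elementary diagram of $M^*$. Because $\card(p) \leq \card(N) < k$ and $M^*$ is $k$-saturated, $p$ is realized in $M^*$ by some tuple $\bar{b}'$. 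The assignment $b_i \mapsto b_i'$ then yields an elementary embedding $N \to M^*$ extending the inclusion $M \hookrightarrow M^*$.

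For $(2) \Rightarrow (1)$: I will verify Robinson's test, namely that for every pair of models $M \subseteq N$ of $\Sigma$, $M$ is existentially closed in $N$, so that every existential $L(M)$-sentence true in $N$ holds in $M$. Let $\varphi(\bar{x}, \bar{y})$ be quantifier-free, $\bar{a}$ a tuple from $M$, and suppose $N \models \exists \bar{y}\, \varphi(\bar{a}, \bar{y})$, witnessed by some $\bar{b}$ in $N$. Pick an elementary extension $M \preceq M^*$ that is $k$-saturated for some $k > \card(N)$. By hypothesis $(2)$ there is an embedding $h \colon N \to M^*$ extending the inclusion $M \hookrightarrow M^*$. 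Since embeddings preserve quantifier-free formulas, $M^* \models \varphi(\bar{a}, h(\bar{b}))$, whence $M^* \models \exists \bar{y}\, \varphi(\bar{a}, \bar{y})$; the relation $M \preceq M^*$ then reflects this sentence back to $M$. Thus $M$ is existentially closed in $N$, and Robinson's test delivers model completeness of $\Sigma$.

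The only subtle point is the size bookkeeping in $(1) \Rightarrow (2)$: one must work with a single $\card(N)$-variable type rather than extending piecewise, so that $k$-saturation with $k > \card(N)$ suffices to realize the type simultaneously. No step is technically difficult since the equivalence rests on two foundational facts, realization in saturated structures and Robinson's test, both proved in the standard model-theoretic references cited (for example \cite{Chang, Hod} and \cite[Appendix B]{Van6}).
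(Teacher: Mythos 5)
Your proof is correct. The paper itself gives no proof of this lemma: it is quoted verbatim from the cited reference (Corollary B.10.4 of Aschenbrenner--van den Dries--van der Hoeven), so there is nothing to compare against, but your argument --- Robinson's test for $(2)\Rightarrow(1)$ and realization of the type of an enumeration of $N$ over $M$ in the saturated model for $(1)\Rightarrow(2)$ --- is exactly the standard one. One small quibble: your closing remark has the subtlety backwards. Since $k$-saturation is by definition about types in finitely many variables over parameter sets of size $<k$, realizing the single $\card(N)$-variable type is not automatic; the usual justification is precisely the piecewise construction, realizing the $1$-type of $b_i$ over $M\cup\{b_j: j<i\}$ at stage $i<\card(N)$, where the parameter set always has size at most $\card(N)<k$. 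Either formulation is fine once this standard lemma is invoked, so this does not affect the validity of the proof.
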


\begin{remark} Let $M, N, M^*$ be models of $\Sigma$ where $M\subseteq N$, $M\preceq M^*$ and $M^*$ is  $k$-saturated for some $k>\card(N)$. If we want to show that $\Sigma$ is model complete, by the last lemma and Zorn's lemma, it is enough to show that there is a substructure $K$ of $N$ that properly contains $M$, is model of $\Sigma$ and embeds over $M$ in $M^*$.
\end{remark}
\noindent
Under such observation, the model completeness of $T_{pdg}$ is a consequence of the following theorem:\\

\begin{theorem}\label{modc} Let $(\Gamma,\chi), (\Gamma',\chi')$ and $(\Gamma^*,\chi^*)$ be models of $T_{pdg}$, such that $(\Gamma,\chi)\subseteq (\Gamma',\chi')$ and $(\Gamma^*,\chi^*)$ is a $k$-saturated elementary extension of $(\Gamma,\chi)$, with $k>\card(\Gamma')$. Then there is a  submodel $(\Delta,\chi_{\Delta})$ of  $(\Gamma',\chi')$ which properly extends $(\Gamma,\chi)$ such that $(\Delta,\chi_{\Delta})$ embeds over $(\Gamma,\chi)$ in $(\Gamma^*,\chi*)$.

\end{theorem}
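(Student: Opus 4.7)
The plan is to split on whether $\chi(\Gamma^{<0})=\chi'(\Gamma'^{<0})$ or not, and in each case carve out from $\Gamma'$ a small proper extension $(\Delta,\chi_\Delta)$ of $(\Gamma,\chi)$ that is visibly a model of $T_{pdg}$, then invoke one of the embedding lemmas \ref{case1}, \ref{case2}, \ref{kul4}, \ref{case4} to embed it into $(\Gamma^*,\chi^*)$ over $\Gamma$. The $k$-saturation of $(\Gamma^*,\chi^*)$, with $k>\card(\Gamma')$, will supply the cut or type that each lemma requires as hypothesis.

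Assume first that $\chi(\Gamma^{<0})=\chi'(\Gamma'^{<0})$. Since $\Gamma\subsetneq\Gamma'$, pick $a\in\Gamma'\setminus\Gamma$. If $[a]\in[\Gamma]$, Lemma \ref{kul1} implies the divisible hull $\Delta$ of $\Gamma+\mathbb{Q}a$ inside $\Gamma'$ satisfies $[\Delta]=[\Gamma]$ and $\chi_\Delta(\Delta^{<0})=\chi(\Gamma^{<0})$, so $(\Delta,\chi_\Delta)\models T_{pdg}$, and Lemma \ref{case1} provides the embedding. If $[a]\notin[\Gamma]$, I realize by saturation some $a'\in\Gamma^{*<0}$ with $[a']$ in the same cut of $[\phi(\Gamma)]$ that $[a]$ defines in $[\Gamma]$, with $[a']\notin[\phi(\Gamma)]$, and with $\chi^*(a')=\phi(\chi'(a))$; Lemma \ref{case2} then produces $\Delta=\Gamma+\mathbb{Q}a$ and the embedding sending $a\mapsto a'$.

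Otherwise $\chi(\Gamma^{<0})\neq\chi'(\Gamma'^{<0})$. Pick $b\in\chi'(\Gamma'^{<0})\setminus\chi(\Gamma^{<0})$ (which lies in $\Gamma'\setminus\Gamma$ by the extension compatibility), and let $(b_k)_{k\in\mathbb{Z}}$ be the $\chi'$-orbit of $b$. Since $\chi'(b_k)$ is the immediate successor of $b_k$ in $\chi'(\Gamma'^{<0})$, no element of $\chi(\Gamma^{<0})$ lies strictly between consecutive orbit members, so the whole orbit sits inside the lower cut $G:=\{g\in\chi(\Gamma^{<0}):g<b_0\}$; this $G$ is nonempty (it contains the shared constant $c$) and special, $\chi(G)\subseteq G$, by strict monotonicity of $\chi$ on $\chi(\Gamma^{<0})$ and downward closedness of $G$. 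Apply Lemma \ref{kul4} if $G=\chi(\Gamma^{<0})$ and Lemma \ref{case4} otherwise. Either way I need a parallel $\chi^*$-orbit $(b_k')_{k\in\mathbb{Z}}$ in $\chi^*(\Gamma^{*<0})$ lying in $\phi(G)$, which I produce by realizing via saturation the partial type
\[
\bigl\{\exists y(y<0\wedge x=\chi(y))\bigr\}\cup\{\phi(g)<x:g\in G\}\cup\{x<\phi(h):h\in\chi(\Gamma^{<0})\setminus G\};
\]
this is finitely satisfiable in $(\Gamma,\chi)\preceq(\Gamma^*,\chi^*)$, since for finitely many $g_1,\ldots,g_n\in G$ the element $\chi(\max g_i)$ is itself in $G$ by specialness and lies above each $g_i$. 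Setting $b_0'$ to be a realization, $b_{k+1}'=\chi^*(b_k')$ for $k\geq 0$, and $b_{-k}'$ the unique $\chi^*$-predecessor of $b_{-k+1}'$ (axiom (2) of $T_{pdg}$) yields the whole orbit.

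I expect the main obstacle to be verifying, in the second case, that the predecessors $b_{-k}'$ stay above $\phi(G)$. If some $b_{-n}'\geq\phi(h)$ with $h\in\chi(\Gamma^{<0})\setminus G$, then by monotonicity $b_0'=(\chi^*)^n(b_{-n}')\geq(\chi^*)^n(\phi(h))=\phi(\chi^n(h))$; but the complement $\chi(\Gamma^{<0})\setminus G$ is also $\chi$-invariant (since $\chi$ is the strictly increasing successor map on $\chi(\Gamma^{<0})$), so $\chi^n(h)$ is again outside $G$, contradicting the choice of $b_0'$. A parallel bookkeeping check, using that the inserted orbit fits between existing elements of $\chi(\Gamma^{<0})$, ensures that $(\Delta,\chi_\Delta)$ really does satisfy the axioms of $T_{pdg}$ and not merely those of a divisible centripetal precontraction group, preserving the least element $c$ and the discrete successor structure required by axioms (1)--(3).
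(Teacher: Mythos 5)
Your overall route is the same as the paper's: split into the cases ``$\chi(\Gamma^{<0})=\chi'(\Gamma'^{<0})$ with $[\Gamma']=[\Gamma]$ or not'' and ``$\chi(\Gamma^{<0})\neq\chi'(\Gamma'^{<0})$ with the new element above $\chi(\Gamma^{<0})$ or realizing a special cut'', build $\Delta$ inside $\Gamma'$ via Lemmas \ref{case1}, \ref{case2}, \ref{kul4} and \ref{case4}, and use $k$-saturation of $(\Gamma^*,\chi^*)$ to realize the relevant cut or $\chi$-orbit. Your treatment of the second case is in fact more careful than the paper's: you write the partial type explicitly, verify finite satisfiability using specialness of $G$, and check that the $\chi^*$-predecessors of $b_0'$ stay in the gap determined by $\phi(G)$ and $\phi(\chi(\Gamma^{<0})\setminus G)$, none of which the paper spells out.

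There is, however, one step that fails as written. In the case $\chi(\Gamma^{<0})=\chi'(\Gamma'^{<0})$ you pick an arbitrary $a\in\Gamma'\setminus\Gamma$ and claim that $[a]\in[\Gamma]$ forces $[\Delta]=[\Gamma]$ for $\Delta$ the divisible hull of $\Gamma+\mathbb{Q}a$; Lemma \ref{kul1} does not assert this, and it is false in general. If $a=g+\epsilon$ with $g\in\Gamma$, $[\epsilon]<[g]$ and $[\epsilon]\notin[\Gamma]$, then $[a]=[g]\in[\Gamma]$ while $a-g=\epsilon\in\Gamma+\mathbb{Q}a$ carries a new archimedean class, so $[\Delta]\neq[\Gamma]$, Lemma \ref{case1} does not apply to this $\Delta$, and neither of your two sub-arguments covers this choice of $a$. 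The repair is the paper's: case instead on whether $[\Gamma']=[\Gamma]$ as a whole. If yes, take $\Delta=\Gamma'$ and apply Lemma \ref{case1}; if no, there exists $a\in\Gamma'^{<0}$ with $[a]\notin[\Gamma]$, and your second sub-argument (Lemma \ref{case2} together with saturation) goes through verbatim for that $a$. With this one adjustment the proof is complete.
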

\begin{proof}
We call $\phi$ the embedding of $(\Gamma,\chi)$ into $(\Gamma^*,\chi*)$ and  just consider the following cases:

\begin{enumerate}
\item [$(1)$] $[\Gamma]=[\Gamma']$: By lemma \ref{case1} it is enough to take $(\Delta,\chi_{\Delta})=(\Gamma',\chi')$.

\item [$(2)$] $[\Gamma]\neq [\Gamma']$ and $\chi(\Gamma)=\chi'(\Gamma)$: By hypothesis there is an element $a\in \Gamma'^{<0}$ such that $[a]\notin [\Gamma]$, and by lemma \ref{case2} there is a model $(\Delta,\chi_{\Delta})\subseteq (\Gamma',\chi')$ of $T_{pdg}$ that properly extends $(\Gamma,\chi)$. By saturation we can extend the embedding $\phi:(\Gamma,\chi)\rightarrow (\Gamma^*,\chi*)$ to an embedding $\phi':(\Delta,\chi_{\Delta})\rightarrow (\Gamma^*,\chi*)$.

\item [$(3)$] $\chi(\Gamma^{<0})\neq\chi'(\Gamma^{<0})$ and there is $b\in\chi(\Gamma'^{<0})$ such that $b>\chi(\Gamma^{<0})$: If we define the family $(b_k)_{k\in \mathbb{Z}}$ of $\chi(\Gamma'^{<0})$ by $b_0=b$, $b_{k+1}=\chi'(b_k)$ for $k>0$ and $b_{k-1}$ as the unique element of $\chi(\Gamma'^{<0})$ such that $\chi(b_{k-1})=b_k$ for $k<0$, then by lemma \ref{kul4} there is a model $(\Delta,\chi_{\Delta})\subseteq (\Gamma',\chi')$  of $T_{pdg}$ which properly extends $(\Gamma,\chi)$ and such that $b_k\in \chi_{\Delta}(\Delta^{<0})$.  

Using the saturation of $(\Gamma^*,\chi*)$ we can find a family $(b'_k)_{k\in \mathbb{Z}}$ in  $(\Gamma^*,\chi*)$ such that $b'_k>\phi(\chi(\Gamma^{<0}))$ for all $k\in \mathbb{Z}$ and $b_{k+1}=\chi^*(b_k)$. Thus, again by lemma \ref{kul4} there is a unique embedding 
\[\phi':(\Delta,\chi_{\Delta})\rightarrow (\Gamma^*,\chi*)\] 
that extends $\phi$ and such that $\phi'(b_k)=b'_k$.

\item [$(4)$] There is $b\in \chi'(\Gamma'^{<0})\setminus \chi(\Gamma)$ such that $b$ realize a special cut in $\chi(\Gamma^{<0})$: We define the set 
\[G=\lbrace a\in \chi(\Gamma^{<0}:a<b \rbrace.\]
Since the models of $T_{pdg}$ are centripetal precontraction groups then we have that $\chi(G)\subseteq G$ and by axioms $(3)$ and $(4)$ there is a family $(b_k)_{k\in \mathbb{Z}}$ in $\chi'(\Gamma'^{<0})$ such that $G<b_k<\Gamma^{>G}$, $b_0=b$, $b_{k+1}=\chi'(b_k)$ for $k>0$ and $b_{k-1}$ is the unique element of   $\chi'(\Gamma'^{<0})$ such that $\chi(b_{k-1})=b_k$ for $k<0$ then by lemma \ref{case4} there is a model $(\Delta,\chi_{\Delta})=(\Delta_G,\chi_G)\subseteq (\Gamma',\chi')$  of $T_{pdg}$ which properly extends $(\Gamma,\chi)$ and such that $b_k\in \chi_{\Delta}(\Delta^{<0})$. 

By saturation there is a family  $(b'_k)_{k\in \mathbb{Z}}$ in $(\Gamma^*,\chi*)$ such that $\phi(G)<b'_k<\phi(\Gamma^{>G})$, $b_{k+1}=\chi^*(b_k)$ for all $k\in \mathbb{Z}$, and again by lemma \ref{case4} there is a unique embedding $\phi':(\Delta,\chi_{\Delta})\rightarrow (\Gamma^*,\chi*)$ that extends $\phi$ and such that $\phi'(b_k)=b'_k$.

\end{enumerate}

\end{proof}

\begin{corollary} $T_{pdg}$ is model complete.

\end{corollary}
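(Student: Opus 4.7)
The plan is to deduce the corollary directly from Theorem \ref{modc} via the Zorn's lemma argument flagged in the remark preceding it. By the lemma characterizing model completeness, it suffices to show that whenever $(\Gamma,\chi)\subseteq(\Gamma',\chi')$ are models of $T_{pdg}$ and $(\Gamma^*,\chi^*)$ is a $k$-saturated elementary extension of $(\Gamma,\chi)$ with $k>\card(\Gamma')$, the inclusion extends to an embedding of $(\Gamma',\chi')$ into $(\Gamma^*,\chi^*)$.

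First I would fix $(\Gamma,\chi)\subseteq(\Gamma',\chi')$ and $(\Gamma^*,\chi^*)$ as above, together with the natural embedding $\phi\colon(\Gamma,\chi)\to(\Gamma^*,\chi^*)$ induced by $(\Gamma,\chi)\preceq(\Gamma^*,\chi^*)$. Consider the poset $\mathcal{P}$ of pairs $(K,\psi)$ where $K$ is a substructure of $(\Gamma',\chi')$ containing $(\Gamma,\chi)$, $K$ is a model of $T_{pdg}$, and $\psi\colon K\to(\Gamma^*,\chi^*)$ is an embedding extending $\phi$; order $\mathcal{P}$ by $(K_1,\psi_1)\leq(K_2,\psi_2)$ iff $K_1\subseteq K_2$ and $\psi_2$ extends $\psi_1$. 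Every chain in $\mathcal{P}$ has an upper bound given by the union of the underlying substructures and the union of the embeddings (the axioms of $T_{pdg}$, being universal-existential about the discrete set $\chi(\Gamma^{<0})$, pass to directed unions). Since $(\Gamma,\mathrm{id}_\Gamma\circ\phi)$ is in $\mathcal{P}$, Zorn's lemma yields a maximal element $(K_0,\psi_0)\in\mathcal{P}$.

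Next I would argue $K_0=\Gamma'$. Suppose for contradiction $K_0\subsetneq\Gamma'$; then $(K_0,\chi|_{K_0})\subseteq(\Gamma',\chi')$ are models of $T_{pdg}$ with $K_0\neq\Gamma'$, and $(\Gamma^*,\chi^*)$ is still $k$-saturated with $k>\card(\Gamma')\geq\card(K_0)$ and is an elementary extension of $(K_0,\chi|_{K_0})$ via $\psi_0$ (indeed any model of $T_{pdg}$ containing $K_0$ inside $\Gamma^*$ is automatically an extension, and elementarity is only needed to guarantee saturation, which is preserved). Applying Theorem \ref{modc} to $(K_0,\chi|_{K_0})\subseteq(\Gamma',\chi')$ and the embedding $\psi_0$ produces a submodel $(\Delta,\chi_\Delta)\subseteq(\Gamma',\chi')$ which properly extends $(K_0,\chi|_{K_0})$ together with an embedding $\psi_0'\colon(\Delta,\chi_\Delta)\to(\Gamma^*,\chi^*)$ extending $\psi_0$. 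Then $(\Delta,\psi_0')\in\mathcal{P}$ strictly dominates $(K_0,\psi_0)$, contradicting maximality.

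The only subtle point—and the closest thing to an obstacle—is verifying that Theorem \ref{modc} really applies at each Zorn step: specifically, that $(\Gamma^*,\chi^*)$, which was assumed $k$-saturated as an elementary extension of the original $(\Gamma,\chi)$, still plays the role required by the theorem relative to $(K_0,\chi|_{K_0})$. This is automatic because saturation and the hypothesis $k>\card(\Gamma')\geq\card(K_0)$ do not depend on which submodel we regard as the base, and the proof of Theorem \ref{modc} only uses the embedding $\psi_0$ and $k$-saturation of the target, not elementarity. Thus $K_0=\Gamma'$, the embedding $\psi_0\colon(\Gamma',\chi')\to(\Gamma^*,\chi^*)$ extends $\phi$, and model completeness follows from the criterion. \qed
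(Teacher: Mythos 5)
Your proof is correct and follows exactly the route the paper intends: the corollary is Theorem \ref{modc} fed into the Zorn's lemma criterion stated in the remark preceding it, and you have simply written out that routine maximality argument. Your observation that Theorem \ref{modc} only uses the embedding and the $k$-saturation of the target (not elementarity over the new base $K_0$) is the right way to justify reapplying it at each Zorn step, and it matches what the paper's proof of that theorem actually uses.
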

\noindent
Now, we can observe that the model  $(\oplus_i\mathbb{Q}e_i, \chi_{\mathbb{Q}})$ of $T_{pdg}$ defined in the first example of section \ref{sectcd} embeds in any model $(\Gamma, \chi')$ of $T_{pdg}$, since we can take any element $b\in \chi'(\Gamma^{<0})$, define the family $(b_n)_{n>0}$ such that $b_1=b$ and $b_{n+1}=\chi'(b_n)$, and identify the element $-e_n$ with the element $b_n$ for all $n\geq 1$. Thus we obtain that $T_{pdg}$ has a prime model and:\\

\begin{corollary} $T_{pdg}$ is complete.
\end{corollary}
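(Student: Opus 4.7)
The plan is to combine the model completeness of $T_{pdg}$ just established with the existence of a prime model. Recall the standard model-theoretic fact: if a theory $T$ is model complete and possesses a model $M_0$ that embeds into every model of $T$, then $T$ is complete, because for any two models $M, N \models T$ the given embeddings $M_0 \hookrightarrow M$ and $M_0 \hookrightarrow N$ become elementary by model completeness, so $M \equiv M_0 \equiv N$. Thus it suffices to exhibit such a prime model, and the author already identifies the natural candidate before the corollary, namely $(\oplus_{i>0}\mathbb{Q}e_i, \chi_{\mathbb{Q}})$ from item (2) of the Example.

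To embed this candidate into an arbitrary model $(\Gamma, \chi')$ of $T_{pdg}$, I would fix any $b \in \chi'(\Gamma^{<0})$ (for instance the distinguished constant $c$) and define recursively $b_1 = b$ and $b_{n+1} = \chi'(b_n)$ for $n \geq 1$. Axioms (2) and (3) of $T_{pdg}$ guarantee that $b_1 < b_2 < \cdots < 0$ is a strictly increasing sequence in $\chi'(\Gamma^{<0})$, and Lemma \ref{indep} then yields that $\{b_n : n\geq 1\}$ is $\mathbb{Q}$-linearly independent in $\Gamma$. Consequently the assignment $e_n \mapsto -b_n$ extends uniquely to a $\mathbb{Q}$-linear injection $\phi : \oplus_{i>0}\mathbb{Q}e_i \to \Gamma$.

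The main technical step, and the one I expect to require the most care, is verifying that $\phi$ is an $L_{pdg}$-embedding. Order preservation reduces to the fact that on any $\alpha = \sum a_i e_i$ with $a_k$ the first nonzero coefficient, the archimedean class of $\phi(\alpha) = -\sum a_i b_i$ is controlled by the dominant term $-a_k b_k$ (since $v(b_k) < v(b_i)$ for $i>k$ because distinct elements of $\chi'(\Gamma^{<0})$ lie in distinct archimedean classes), and so $\sign(\phi(\alpha)) = \sign(a_k) = \sign(\alpha)$. For commutation with $\chi$, I would apply Lemma \ref{cont2}(3) to reduce $\chi'(\phi(\alpha))$ to $\chi'(-a_k b_k)$, and then use axiom (4) of a precontraction group together with the archimedean equivalence of $-a_k b_k$ and $\sign(a_k)(-b_k)$ to obtain $\chi'(-a_k b_k) = -\sign(a_k)\, b_{k+1}$. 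On the other side, $\phi(\chi_{\mathbb{Q}}(\alpha)) = \phi(\sign(a_k)\, e_{k+1}) = -\sign(a_k)\, b_{k+1}$, matching precisely. Once $\phi$ is recognized as an $L_{pdg}$-embedding, the primeness of $(\oplus_{i>0}\mathbb{Q}e_i, \chi_{\mathbb{Q}})$ is established and the corollary follows from model completeness as outlined above.
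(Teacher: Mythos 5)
Your argument is essentially the paper's own: the paper likewise deduces completeness from model completeness plus the observation that $(\oplus_i\mathbb{Q}e_i,\chi_{\mathbb{Q}})$ embeds into every model of $T_{pdg}$ via $-e_n\mapsto b_n$, $b_{n+1}=\chi'(b_n)$; you merely supply the verification (linear independence via Lemma~\ref{indep}, dominance of the leading term, Lemma~\ref{cont2}) that the paper leaves implicit. One point needs adjustment, and it is a point the paper itself glosses over: $L_{pdg}$ contains the constant symbol $c$, interpreted as the least element of $\chi(\Gamma^{<0})$, which in the prime model is $-e_2$ (the smallest element of $\chi_{\mathbb{Q}}((\oplus_i\mathbb{Q}e_i)^{<0})=\{-e_2,-e_3,\dots\}$). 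An $L_{pdg}$-embedding must therefore send $-e_2$ to $c^{\Gamma}$, i.e.\ one needs $b_2=\chi'(b_1)=c^{\Gamma}$; but since $\chi'$ maps $\chi'(\Gamma^{<0})$ bijectively onto $\chi'(\Gamma^{<0})^{>c}$, no choice of $b_1\in\chi'(\Gamma^{<0})$ accomplishes this --- in particular not your suggested instance $b_1=c$, which yields $\phi(-e_2)=\chi'(c)>c^{\Gamma}$. The fix is to take $b_1$ to be a $\chi'$-preimage of $c^{\Gamma}$ (which exists in $\Gamma^{<0}$ but lies outside $\chi'(\Gamma^{<0})$); centripetality still gives $v(b_1)<v(b_2)<\cdots$, so your independence, order-preservation and $\chi$-commutation checks go through verbatim, and the corollary then follows exactly as you outline.
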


\subsection{Quantifier elimination of $T_{pdg*}$}

Expanding the language $L_{pdg}$ to $L_{pdg*}=L_{pdg}\cup \lbrace\infty, \chi^{-1}, \delta_1, \delta_2, \delta_3,... \rbrace$, where $\infty$ is a constant symbol, $\chi^{-1}$ and $\delta_n$ for $n>0$ are unary function symbols, each model $(\Gamma, \chi)$ of $T_{pdg}$ can be seen as a $L_{pdg*}$-structure with underlying set $\Gamma_{\infty}=\Gamma\cup\lbrace \infty\rbrace$ in which:

\begin{itemize}
\item $\infty$ is such that $\infty\notin \Gamma$, $\infty+\infty=\chi(\infty)=-\infty=\infty$ and for all $x\in \Gamma$ we have $x+\infty=\infty$, and 
\item  we interpret $\delta_n$ as division by $n$ and $\chi^{-1}$ as a function from $\Gamma_{\infty}$ to $\Gamma_{\infty}$ such that its restriction 
\[\chi^{-1}:\chi(\Gamma^{<0})^{>c}\rightarrow\chi(\Gamma^{<0})\] 
is the inverse of $\chi:\chi(\Gamma^{<0})\rightarrow \chi(\Gamma^{<0})^{>c}$, $\chi^{-1}(0)=0, \chi^{-1}(c)=\infty$ and $\chi^{-1}(a)=\infty$ for all $a$ in $\Gamma^{\neq 0}_{\infty} \setminus \chi(\Gamma)$.
\end{itemize}
Thus, we define the theory $T_{pdg*}$ as the $L_{pdg*}$-theory whose models are the  expansion of models of $T_{pdg}$.

Now, we observe that each $L_{pdg*}$-substructure of a model of $T_{pdg*}$ has a $T_{pdg*}$-closure in the following sense:\\

\begin{lemma} Let $(\Gamma,\chi)$ be a model of $T_{pdg*}$ and $(\Gamma_0,\chi_{0})$ be a $L_{pdg*}$-substructure of $(\Gamma,\chi)$. There is a model $(\Gamma',\chi')$ of $T_{pdg*}$ such that
\begin{enumerate}
\item [$(1)$] $(\Gamma',\chi')\subseteq (\Gamma,\chi)$, and
\item [$(2)$] $(\Gamma',\chi')$ can be embedded over $(\Gamma_0,\chi_0)$ into every model of $T_{pdg*}$ which extends $(\Gamma_0,\chi_0)$.
\end{enumerate}
\end{lemma}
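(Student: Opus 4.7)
The plan is to split into two cases according to whether $c\in\chi(\Gamma_0^{<0})$, constructing $(\Gamma',\chi')$ explicitly in each case.

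If $c\in\chi(\Gamma_0^{<0})$, I claim $(\Gamma_0,\chi_0)$ is already a model of $T_{pdg*}$ and I take $(\Gamma',\chi'):=(\Gamma_0,\chi_0)$; the universal property is then trivial, since the inclusion itself works. The key observation is that closure of $\Gamma_0$ under $\chi^{-1}$ combined with $c\in\chi(\Gamma_0^{<0})$ forces $\chi(\Gamma^{<0})\cap\Gamma_0=\chi(\Gamma_0^{<0})$: indeed, for $x\in\chi(\Gamma^{<0})\cap\Gamma_0$ with $x>c$ the element $\chi^{-1}(x)$ again lies in $\chi(\Gamma^{<0})\cap\Gamma_0$, and applying $\chi$ to it shows $x\in\chi(\Gamma_0^{<0})$. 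From this, axioms (1)-(3) of Definition \ref{clog} transfer from $(\Gamma,\chi)$ to $(\Gamma_0,\chi_0)$; divisibility comes from closure under the $\delta_n$, and the precontraction and centripetal conditions are inherited.

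In the substantive case $c\notin\chi(\Gamma_0^{<0})$, axiom (1) for $(\Gamma,\chi)$ gives some $\hat{c}\in\Gamma^{<0}$ with $\chi(\hat{c})=c$, and I set $(\Gamma',\chi'):=(\Gamma_0+\mathbb{Q}\hat{c},\chi|_{\Gamma'})$. The technical heart is the dominance claim $[\hat{c}]>[\gamma]$ for every nonzero $\gamma\in\Gamma_0$: for $\gamma\in\Gamma_0^{<0}$, the equality $[\gamma]=[\hat{c}]$ would force $\chi(\gamma)=c\in\chi(\Gamma_0^{<0})$ by axiom (4) of precontraction, contradicting the case hypothesis, while $[\gamma]>[\hat{c}]$ would force $\gamma<\hat{c}$ and monotonicity then gives $\chi(\gamma)\leq c=\min\chi(\Gamma^{<0})$, again forcing $\chi(\gamma)=c$. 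This dominance makes $\Gamma_0+\mathbb{Q}\hat{c}$ a direct sum; by Lemma \ref{cont2}(3) any element $\gamma+q\hat{c}$ with $q\neq 0$ satisfies $\chi(\gamma+q\hat{c})=\sign(q)\,c$, so $\chi(\Gamma'^{<0})=\chi(\Gamma_0^{<0})\cup\{c\}$. Lemmas \ref{kul1} and \ref{kul2} ensure $(\Gamma',\chi')$ is a divisible centripetal precontraction group, the remaining $T_{pdg}$ axioms are then straightforward, and closure under $\chi^{-1}$ holds because $\hat{c}<c=\min\chi(\Gamma^{<0})$ forces $\hat{c}\notin\chi(\Gamma^{<0})$, so $\chi^{-1}(\hat{c})=\infty\in\Gamma'$.

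For the universal property, let $(\Gamma^*,\chi^*)\models T_{pdg*}$ extend $(\Gamma_0,\chi_0)$. Axiom (1) applied in $\Gamma^*$ yields $\hat{c}^*\in\Gamma^{*<0}$ with $\chi^*(\hat{c}^*)=c$, and the same dominance argument run inside $\Gamma^*$ gives $[\hat{c}^*]>[\gamma]$ for every nonzero $\gamma\in\Gamma_0$. The $\mathbb{Q}$-linear assignment $\phi'(\gamma+q\hat{c}):=\gamma+q\hat{c}^*$ is therefore well-defined on the direct sum $\Gamma_0\oplus\mathbb{Q}\hat{c}$, and dominance in $\Gamma^*$ makes $\phi'$ preserve both order and $\chi$ by exactly the same calculation as in $\Gamma'$; one may formalize this as an application of Lemma \ref{case2}. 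The main obstacle I anticipate is the dominance claim $[\hat{c}]>[\Gamma_0\setminus\{0\}]$ (and its transfer to $\Gamma^*$); once in place, the rest reduces to direct calculations or citations of the embedding lemmas already proved.
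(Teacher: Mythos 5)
Your proposal is correct and follows essentially the same route as the paper: split on whether $c$ already lies in $\chi(\Gamma_0^{<0})$, and otherwise adjoin a $\chi$-preimage $\hat{c}$ of $c$ to form $\Gamma_0+\mathbb{Q}\hat{c}$, embedding it into any extension by sending $\hat{c}$ to any preimage of $c$ there. Your dominance argument $[\hat{c}]>[\gamma]$ for nonzero $\gamma\in\Gamma_0$ is exactly the content behind the paper's terser claim that any two preimages of $c$ realize the same type over $\Gamma_0$, so you have in fact supplied more detail than the original.
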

\begin{proof}
If there is $a\in \Gamma_0$ such that $\chi(a)=c$, then in fact $(\Gamma_0,\chi_0)$ is a model of $T_{pdg*}$ and we finish. Otherwise, there is $a\in \Gamma^{<0}$ such that $\chi(a)=c$, so we define $\Gamma'$ as the divisible ordered abelian group generated by $\Gamma_0\cup\lbrace a\rbrace$, and $\chi'=\chi|_{\Gamma'}$. Thus, $(\Gamma',\chi')$ is a model of $T_{pdg*}$.

Finally, given any model $(\Gamma^*,\chi^*)$ of $T_{pdg*}$ which extends $(\Gamma_0,\chi_0)$, there is $b\in \Gamma^*$ such that $\chi(b)=c$. We see that $a$ and $b$ have the same type over $\Gamma_0$. Thus, we define the embedding $\phi:(\Gamma',\chi')\rightarrow (\Gamma^*,\chi^*)$ as $\phi(\Gamma_0)=\Gamma_0$ and $\phi(a)=b$. 

\end{proof}
\noindent
As a consequence of this lemma and mimicking the proof of the theorem \ref{modc}, but considering $L_{pdg*}$-structures instead of $L_{pdg}$-structures, we can prove that the $L_{pdg*}$-theory $T_{pdg*}$ has quantifier elimination.

\subsection{Definable subsets of $\chi(\Gamma^{<0})$}

In this section we mimic the study made by Gehret in [2] about some definable sets in the asymptotic couple of $\mathbb{T}_{\log}$ and show that given a model $(\Gamma, \chi)$ of $T_{pdg*}$, each definable subset of $\chi(\Gamma^{<0})$ is a finite union of intervals in $\chi(\Gamma^{<0})$ and singletons. Specifically, to prove such result we will use a special kind of functions called $\chi$-functions\footnote{The notion of $\chi$-function used here was inspired in the notion of $\chi$-polynomial defined in \cite{Kuh3}}.

Now, for any element $a\in \chi(\Gamma^{<0})$ and integer $k<0$ we put $\chi^k(x)=(\chi^{-1})^{-k}(x)$ and $\chi^0(x)=x$.\\

\begin{definition}
We say that a function $G:\chi(\Gamma^{<0})\rightarrow \Gamma$ is a\textit{ $\chi$-function}\footnote{The notion of $\chi$-function used here was inspired in the notion of $\chi$-polynomial defined in \cite{Kuh3}} if it is constant or 
\[G(x)=\sum\limits_{i=1}^n q_i\chi^{k_i}(x)-\alpha\] 
for some $n>0$,  $k_1<k_2<...<k_n$ in $\mathbb{Z}$, $q_1,..., q_n\in \mathbb{Q}^{\neq 0}$ and $\alpha\in \Gamma$.\\

\end{definition}

Since for each $k\in \mathbb{Z}^{<0}$, the $\chi$-function $\chi^k(x)$ has image $\infty$ for $x<\chi^{-k}(c)$ with $x\in \chi(\Gamma^{<0})$, and it is injective and strictly increasing in $\chi(\Gamma^{<0})_k=\lbrace x\in \chi(\Gamma^{<0}):x\geq \chi^{-k}(c)\rbrace$, then if for any $\chi$-function  $G(x)=\sum\limits_{i=1}^n q_i\chi^{k_i}(x)+\alpha$ we define 

\[\Dom_G=
\begin{cases}
\chi(\Gamma^{<0})_{k_1} \text{ if } k_1<0\\
\chi(\Gamma^{<0}) \text{       if } k_1\geq 0\\
\end{cases} 
\]
then we have:

\begin{lemma} Let $G:\chi(\Gamma^{<0})\rightarrow \Gamma$ be the $\chi$-function given by $G(x)=\sum\limits_{i=1}^n q_i\chi^{k_i}(x)+\alpha$, then
\begin{enumerate}
\item [$(1)$] $G(a)=\infty$ for any $a\in \chi(\Gamma^{<0}) \setminus \Dom_G$.
\item [$(2)$] $G(x)$ is injective on $\Dom_G$.
\item [$(3)$] If $q_1>0$ then $G(x)$ is strictly increasing on $\Dom_G$, and if $q_1<0$ then $G(x)$ is strictly decreasing on  $\Dom_G$.
\end{enumerate}
\end{lemma}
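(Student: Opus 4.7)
The plan is to dispatch (1) by a direct appeal to the absorption rules for $\infty$ in $L_{pdg*}$, and to prove (2) and (3) simultaneously by showing that $G$ is strictly monotone on $\Dom_G$ in the direction of $\sign(q_1)$ via a valuation-dominance argument.

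For (1), when $k_1\geq 0$ the set $\chi(\Gamma^{<0})\setminus \Dom_G$ is empty and there is nothing to show. When $k_1<0$ and $a\in \chi(\Gamma^{<0})\setminus \Dom_G$, one has $a<\chi^{-k_1}(c)$, so iterating $\chi^{-1}$ from $a$ reaches $c$ in strictly fewer than $-k_1$ steps and the next application yields $\infty$, which then propagates by $\chi^{-1}(\infty)=\infty$. Hence $\chi^{k_1}(a)=\infty$, and the $L_{pdg*}$-rules $q\cdot\infty=\infty$ for $q\in\mathbb{Q}^{\neq 0}$ together with $\infty+\alpha=\infty$ collapse the whole sum to $G(a)=\infty$.

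For (2) and (3), I would fix $x<y$ in $\Dom_G$ and set $d_i:=\chi^{k_i}(y)-\chi^{k_i}(x)$. Each $\chi^{k_i}$ is strictly increasing on $\Dom_G$ (from axiom (2) of precontraction groups for $k_i\geq 0$, and because $\chi^{-1}$ is the inverse of $\chi$ on the appropriate discrete range for $k_i<0$), so each $d_i>0$ and $G(y)-G(x)=\sum q_i d_i$. I aim to prove
\[v(d_1)<v(d_i)\quad\text{for all }i\geq 2,\]
so that $q_1 d_1$ archimedean-dominates the remaining terms and $G(y)-G(x)$ inherits $\sign(q_1 d_1)=\sign(q_1)$ since $d_1>0$. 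Two ingredients feed the estimate. First, \emph{distinct elements of $\chi(\Gamma^{<0})$ lie in distinct archimedean classes}: if $u\neq w$ in $\chi(\Gamma^{<0})$ were archimedean-equivalent with the same (negative) sign, axiom (4) of precontraction groups would force $\chi(u)=\chi(w)$, contradicting the injectivity of $\chi$ on $\chi(\Gamma^{<0})$ guaranteed by axiom (2) of $T_{pdg}$. Second, Lemma \ref{cont2}(4) iterated yields $v(\chi^j(z))>v(z)$ for any $z\neq 0$ and $j\geq 1$. Setting $u=\chi^{k_1}(x)<w=\chi^{k_1}(y)$, the first ingredient forces $[u]>[w]$ strictly, and a short estimate ($|u|/2<w-u<|u|$, valid whenever $n|w|<|u|$ for all $n$) then gives $v(d_1)=v(u)=v(\chi^{k_1}(x))$. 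For $i\geq 2$, applying the first ingredient to the distinct pair $\chi^{k_i}(x)<\chi^{k_i}(y)$ in $\chi(\Gamma^{<0})$ gives $v(d_i)=v(\chi^{k_i}(x))$, and the second ingredient with $j=k_i-k_1\geq 1$ gives $v(\chi^{k_i}(x))>v(\chi^{k_1}(x))=v(d_1)$. Combining yields the claim, strict monotonicity of $G$ on $\Dom_G$, and hence (3); (2) follows because strict monotonicity implies injectivity.

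The main obstacle is the passage from bounds on the individual iterates to bounds on the differences $d_i$: the crucial equalities $v(d_i)=v(\chi^{k_i}(x))$ hinge on the strict separation of $\chi(\Gamma^{<0})$ into distinct archimedean classes, which uses axioms (2) and (4) of $T_{pdg}$ together. Without this strict separation one would only obtain $v(d_i)\geq v(\chi^{k_i}(x))$, and the chain $v(d_1)<v(d_i)$ could fail, causing the dominant-term argument to collapse.
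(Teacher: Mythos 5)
Your proof is correct and takes essentially the same route as the paper: part (1) via the $\infty$-absorption rules, and part (3) via the archimedean dominance of the $q_1$-term, which is precisely the paper's chain of inequalities $[\chi^{k_1}(b)-\chi^{k_1}(a)]>\dots>[\chi^{k_n}(b)-\chi^{k_n}(a)]$ recast in terms of the natural valuation (your write-up actually supplies the justification, via the distinctness of archimedean classes on $\chi(\Gamma^{<0})$ and Lemma \ref{cont2}(4), that the paper leaves terse). The only divergence is that you obtain (2) as a corollary of strict monotonicity, whereas the paper proves it separately from the linear independence of $\chi(\Gamma^{<0})$ (Lemma \ref{indep}); both are valid and yours is the more economical.
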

\begin{proof}
\begin{enumerate}
\item [$(1)$] If $k_1<0$ then $\chi^{k_1}(a)=\infty$ for all $a<\chi^{-k_n}(c)$. Now, if $k_n>0$, then the proof is immediate.
\item [$(2)$] If $x\in \Dom_G$ then $\chi^{k_1}(x)<\chi^{k_2}(x) <...<\chi^{k_n}(x)$. So, if $y,x\in \Dom_G\subseteq\chi(\Gamma^{<0})$ with $y\neq x$, then $\chi^{k_i}(y)\neq \chi^{k_i}(x)$ for all $1\leq i\leq n$, and by lemma \ref{indep} we have that $G(x)\neq G(y)$.

\item [$(3)$] If $a,b\in \Dom_G$ with $a<b$, then $[a]<[b]$, $\chi(a)<\chi(b)$ and by lemma \ref{cont1} $\chi(a-b)=\chi(a)$. Thus, for all $i,j\in \mathbb{Z}$ with $i<j$ we have that
\[[\chi^i(b)-\chi^i(a)]>[\chi^j(b)-\chi^j(a)]\]
and then 
\[[\chi^{k_1}(b)-\chi^{k_1}(a)]>[\chi^{k_2}(b)-\chi^{k_2}(a)]>...>[\chi^{k_n}(b)-\chi^{k_n}(a)].\]
So, since $\chi^{k_1}(b)>\chi^{k_1}(a)$, we have that $G(b)-G(a)=\sum\limits_{i=1}^nq_i(\chi^{k_i}(b)-\chi^{k_i}(a))>0$ if and only if $q_1>0$.
\end{enumerate}

\end{proof}
\noindent
Since by lemma \ref{indep} we know that $\chi(\Gamma^{<0})$ is a linearly independent subset of $\Gamma$ as $\mathbb{Q}$-vector space, then depending on the constant value $\alpha$  we observe how many images has the restriction of the $\chi$-function 
\[G(x)=\sum\limits_{i=1}^n q_i\chi^{K_i}(x)-\alpha\] 
to $\Dom_G$ in $\chi(\Gamma^{<0})$:

\begin{lemma} \label{teo0} Given the $\chi$-function $G(x)=\sum\limits_{i=1}^n q_i\chi^{K_i}(x)+\alpha$ then we have one of the following possibilities:
\begin{enumerate}
\item [$(1)$] $\alpha=0$, $n=1$, $q_1=1$ and $G(\chi(\Gamma^{<0}))\subseteq \chi(\Gamma^{<0})$, or
\item [$(2)$] $\card(G(\Dom_G)\cap  \chi(\Gamma^{<0}))\leq 2$.
\end{enumerate}
\end{lemma}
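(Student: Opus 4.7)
The plan is to exploit the $\mathbb{Q}$-linear independence of $\chi(\Gamma^{<0})$ established in Lemma~\ref{indep}. If $G$ is constant, the conclusion $\card(G(\Dom_G)\cap \chi(\Gamma^{<0}))\leq 1$ is immediate; so assume $G(x)=\sum\limits_{i=1}^n q_i\chi^{k_i}(x)+\alpha$ with $n\geq 1$. For every $a\in \Dom_G$ with $b:=G(a)\in \chi(\Gamma^{<0})$, the defining identity can be rewritten as
\[
\alpha \;=\; b \;-\; \sum\limits_{i=1}^n q_i\chi^{k_i}(a),
\]
which expresses $\alpha$ as a $\mathbb{Q}$-linear combination of elements of the linearly independent set $\chi(\Gamma^{<0})$. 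By Lemma~\ref{indep} such a combination is unique after collecting like terms, so the set $S_\alpha\subseteq \chi(\Gamma^{<0})$ of basis elements appearing with nonzero coefficient in the expansion of $\alpha$ is an invariant of $\alpha$, the same for every admissible $a$.

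The elements $\chi^{k_1}(a)<\chi^{k_2}(a)<\dots<\chi^{k_n}(a)$ are pairwise distinct because $\chi$ is injective on $\chi(\Gamma^{<0})$ and the $k_i$ are distinct; hence the only possible collision on the right-hand side is $b=\chi^{k_{i^*}}(a)$ for some $i^*$. Three configurations then arise: (A) no collision, giving $\card(S_\alpha)=n+1$; (B) collision with $q_{i^*}=1$, giving $\card(S_\alpha)=n-1$; (C) collision with $q_{i^*}\neq 1$, giving $\card(S_\alpha)=n$. Since $\card(S_\alpha)$ depends only on $\alpha$, exactly one of the three configurations is realized across all admissible $a$ for a fixed $G$.

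Configuration (B) with $n=1$ is the exceptional one: the identity then collapses to $\alpha=0$ with $q_1=1$, so $G(x)=\chi^{k_1}(x)$, which is case~(1). In each of (A), (C), and (B) with $n>1$, the ordered tuple $\chi^{k_1}(a)<\dots<\chi^{k_n}(a)$ is forced to be a prescribed subset of $S_\alpha\cup\{b\}$; since $\chi^{k_1}$ is strictly increasing and injective on $\chi(\Gamma^{<0})$, $\chi^{k_1}(a)$ may equal only one of at most two candidates in the relevant sub-case (roughly, either $\min S_\alpha$ or the next available element once the missing or extra position of $b$ is accounted for), which confines $a$ to at most two values and therefore gives case~(2). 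The main obstacle is the coincidence bookkeeping: one must track carefully how the possible collision $b=\chi^{k_{i^*}}(a)$ together with the coefficient $q_{i^*}$ alters the support structure of $\alpha$, so as to isolate the unique exceptional configuration $n=1$, $q_1=1$, $\alpha=0$ and, in all the remaining configurations, bound by $2$ the number of admissible $a$.
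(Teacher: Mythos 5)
Your proposal is correct and takes essentially the same route as the paper: both rest on the $\mathbb{Q}$-linear independence of $\chi(\Gamma^{<0})$ from Lemma \ref{indep}, compare the basis support of $\alpha$ with the ordered tuple $\chi^{k_1}(a)<\dots<\chi^{k_n}(a)$, and use the injectivity of $\chi^{k_1}$ to confine $a$ to at most two values, isolating $n=1$, $q_1=1$, $\alpha=0$ as the exceptional case. Your configurations (A), (B), (C) are just the paper's cases $m=n+1$, $m=n-1$, $m=n$ read off from the relation $\alpha=b-\sum q_i\chi^{k_i}(a)$ rather than from a fixed expansion of $\alpha$.
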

\begin{proof}
Considering the element $\alpha$ we have two main cases: $\alpha$ does not belongs to $\Span_{\mathbb{Q}}\chi(\Gamma^{<0})$ or $\alpha$ belong to $\Span_{\mathbb{Q}}\chi(\Gamma^{<0})$. In the first case, $G(x)\notin \chi(\Gamma^{<0})$ for all $x\in \Dom_G$. In the second case we can assume that for some natural $m>0$ there are $r_1,r_2,...,r_m\in \mathbb{Q}$ and $a_1, a_2,...,a_m\in \chi(\Gamma^{<0})$ with $a_1<a_2<...<a_n$ such that $\alpha=r_1a_1+r_2a_2+...+r_ma_m$. Clearly, if $x\in \Dom_G$ then $G(x)\in \chi(\Gamma^{<0})$ if and only if $G(x)=\chi^{k_h}(x)$ for some $1\leq h\leq n$ or $G(x)=a_s$ for some $1\leq s\leq m$, which is possible only if all components except one of $G(x)$ are canceled. We analyze the possible cases:
\begin{itemize}
\item If $m=0$, i.e $\alpha=0$ and $n=1$, $q_1=1$ then for all $x\in \Dom_G$ we have $G(x)=\chi^{k_1}(x)\in \chi(\Gamma^{<0})$. 
\item If $|m-n|>1$ then for each element $x$ of $\Dom_G$ the value of $G(x)$ is a linear combination of at least two elements of $\chi(\Gamma^{<0})$. Thus, $G(\Dom_G)\cap \chi(\Gamma^{<0})=\emptyset$.
\item If $m=n+1$, then $G(x)$ belongs $\chi(\Gamma^{<0})$ only if $\card(\lbrace a_1, a_2,...,a_m\rbrace \cap \lbrace\chi^{k_i}(x): i\in \lbrace 1,2,...,n\rbrace\rbrace)=n$. Thus if $G(x)\in \chi(\Gamma^{<0})$  we have only two possibilities or $\chi^{k_1}(x)=a_1$ or $\chi^(k_n)=a_m$. So, since $G$ is injective on $\Dom_G$ then $\card(G(\Dom_G)\cap  \chi(\Gamma^{<0}))\leq 2$.
\item If $m=n$, then  $G(x)$ belongs $\chi(\Gamma^{<0})$ only if $\card(\lbrace a_1, a_2,...,a_m\rbrace \cap \lbrace\chi^{k_i}(x): i\in \lbrace 1,2,...,n \rbrace \rbrace)=n$ or equivalent $\chi^{k_i}(x)=a_i$ for all $1\leq i\leq n$. Since $G$ is injective on $\Dom_G$ then \[\card(G(\Dom_G)\cap  \chi(\Gamma^{<0}))=1.\]
\item If $n=m+1$, then analysis is similar to the case $m=n+1$.
\end{itemize}

\end{proof}
\noindent
Clearly if $G(x)$ and $H(x)$ are two $\chi$-functions then $G(x)+H(x)$, $G(x)-H(x)$ and $\delta_n(G(x))$ for all $n>0$ are again $\chi$-functions. Thus\\

\begin{lemma} \label{teo1} The set of $\chi$-functions is closed under $+, -, \delta_n$.
\end{lemma}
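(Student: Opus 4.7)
The plan is to verify each of the three closure properties by a direct bookkeeping argument, treating the constant case separately and then checking that combining $\chi$-functions produces another expression of the required shape after collecting like terms.

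First I would dispose of the trivial cases. If $G$ is constant with value $\gamma$, then $G(x)\pm H(x)$ is simply $H(x)$ shifted in its constant term, and $\delta_n(G(x))=\gamma/n$ is again constant; so both closure claims reduce to the case where neither operand is constant. For $\delta_n$ applied to a non-constant $\chi$-function $G(x)=\sum_{i=1}^n q_i\chi^{k_i}(x)-\alpha$, just observe that
\[
\delta_n\bigl(G(x)\bigr)=\sum_{i=1}^n \frac{q_i}{n}\,\chi^{k_i}(x)-\frac{\alpha}{n},
\]
which has the same list of exponents $k_1<\cdots<k_n$, nonzero rational coefficients $q_i/n$, and constant term $\alpha/n\in\Gamma$, hence is again a $\chi$-function.

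The substantive step is closure under $+$ and $-$. Suppose $G(x)=\sum_{i=1}^n q_i\chi^{k_i}(x)-\alpha$ and $H(x)=\sum_{j=1}^m r_j\chi^{l_j}(x)-\beta$ are non-constant $\chi$-functions. Let $K=\{k_1,\dots,k_n\}\cup\{l_1,\dots,l_m\}$ and, for each $k\in K$, define a combined coefficient $s_k\in\mathbb{Q}$ by $s_k=q_i+r_j$ (respectively $q_i-r_j$) where the sums are taken only over the indices where the exponent equals $k$ (coefficients missing in $G$ or $H$ being treated as $0$). Let $K'=\{k\in K: s_k\ne 0\}$ and enumerate $K'$ in strictly increasing order as $k'_1<\cdots<k'_p$. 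Then
\[
G(x)\pm H(x)=\sum_{t=1}^{p} s_{k'_t}\,\chi^{k'_t}(x)-(\alpha\pm\beta).
\]
If $p\geq 1$, this is a $\chi$-function by definition, since the $s_{k'_t}$ are nonzero rationals and $\alpha\pm\beta\in\Gamma$; if $p=0$, every exponent has been cancelled and $G(x)\pm H(x)$ is the constant $-(\alpha\pm\beta)$, which is also a $\chi$-function.

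The only mildly subtle point, and the one I would flag as the potential obstacle, is bookkeeping around the value $\infty$ on points outside the domains. Outside $\mathrm{Dom}_G\cap\mathrm{Dom}_H$ at least one summand is $\infty$, and the extension of $+$ to $\Gamma_\infty$ (as fixed before Definition of $T_{pdg*}$) gives $\infty$ as the result, which agrees with the domain $\chi(\Gamma^{<0})_{k'_1}$ of the combined expression (where $k'_1=\min\{k_1,l_1\}$ once no cancellation occurs at the minimum, and a larger index otherwise, in which case the combined function is genuinely defined on a larger set — this does no harm because on the smaller of the original domains the cancelled highest-order-negative term was infinite in both expressions simultaneously by the same argument). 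Since all the arithmetic is componentwise on the finite lists of exponents and the constant term stays in $\Gamma$, the closure under $+$, $-$, and $\delta_n$ follows.
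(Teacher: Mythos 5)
Your proof is correct; the paper offers no argument at all for this lemma (it is prefaced only by ``Clearly\ldots''), and your collect-like-terms verification --- including the case where full cancellation leaves a constant, and the divisibility of $\Gamma$ guaranteeing $\alpha/n\in\Gamma$ --- is exactly the routine check being elided. The one caveat is the point you already flag yourself: at the finitely many arguments where a cancelled $\chi^{k}$-term with $k<0$ takes the value $\infty$, the pointwise sum is $\infty$ while the collected expression may be finite, so the closure is to be read syntactically (up to the stated conventions for $\infty$), which is evidently the paper's intent.
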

\noindent
On the other hand, although the composition $\chi(G(x))$ of $\chi$ and a $\chi$-function $G(x)$ is not necessarily a $\chi$-function, we can prove that $\chi(G(x))$ is given piecewise by $\chi$-functions (lemma \ref{teo2}), which means that there are $a_1, a_2, ..., a_n\in \chi(\Gamma^{0})\cup\lbrace 0 \rbrace$ with $c=a_1<a_2<...<a_n=0$ such that for any $i\in \lbrace 1, 2, ..., n-1\rbrace$ the restriction of $\chi(G(x))$ to $[a_i,a_{i+1})_{\chi}$ is given by a $\chi$-function.

To prove this, we first observe that by lemma \ref{indep}, for any element $\theta=\sum\limits_{i=1}^nq_ia_i$ of $\Gamma$ where $q_1,q_2,...,q_n\in \mathbb{Q}^{\neq 0}$ and $a_1, a_2,...,a_n\in \chi(\Gamma^{<0})$ with $a_1<a_2<...<a_n$, we have that $\chi(\theta)=\chi(a_1)$ if $q_1>0$ and $\chi(\theta)=-\chi(a_1)$ if $q_1<0$. Thus we have:\\

\begin{lemma} \label{teo2} Let $G(x)$ be a $\chi$-function. Then $\chi(G(x))$ is given piecewise by $\chi$-functions.
\end{lemma}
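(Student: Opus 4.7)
\emph{Plan.} The strategy is to partition $\chi(\Gamma^{<0})$ into finitely many discrete intervals on each of which $G(x)$ has a unique summand of strictly largest archimedean class; by lemma \ref{cont2}(3) together with axiom $(3)$, $\chi(G(x))$ then equals $\pm\chi$ of that dominant summand, which is either $\pm\chi^{k+1}(x)$ (a $\chi$-function) or a constant (also a $\chi$-function).

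The constant case being trivial, assume $G(x)=\sum_{i=1}^{n}q_i\chi^{k_i}(x)-\alpha$ with $k_1<\cdots<k_n$ and $q_i\in\mathbb{Q}^{\neq 0}$. Using lemma \ref{indep}, fix a $\mathbb{Q}$-linear complement of $\Span_{\mathbb{Q}}\chi(\Gamma^{<0})$ in $\Gamma$ and decompose $\alpha=\alpha_0+\sum_{j=1}^{m}r_j a_j$ with $r_j\in\mathbb{Q}^{\neq 0}$, $a_1<\cdots<a_m$ in $\chi(\Gamma^{<0})$, and $\alpha_0$ in the complement (so either $\alpha_0=0$ or $\alpha_0\notin \Span_{\mathbb{Q}}\chi(\Gamma^{<0})$). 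This expresses $G(x)$ as a $\mathbb{Q}$-linear combination of the variable terms $\chi^{k_i}(x)$, the constants $a_j$, and $\alpha_0$.

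Next I collect a finite critical set $C\subset\chi(\Gamma^{<0})$ consisting of: the lower endpoint $\chi^{-k_1}(c)$ of $\Dom_G$ when $k_1<0$; the collision points $\chi^{-k_i}(a_j)$ where a variable term coincides with a constant one (so coefficients may combine and the dominant term can switch); and, when $\alpha_0\neq 0$, the (finitely many) points at which some $\chi^{k_i}(x)$ becomes archimedean equivalent to $\alpha_0$ in the sense of axiom $(4)$. Adjoining $c$ and $0$ and listing $C$ as $c=a_1'<\cdots<a_p'=0$ yields the pieces $[a_i',a_{i+1}')_\chi$ (a singleton collision point is a legitimate piece by discreteness of $\chi(\Gamma^{<0})$). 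Since $\chi^{k_1}(x)$ is the smallest of the $\chi^{k_i}(x)$ and $a_1$ the smallest of the $a_j$, and since between consecutive critical points no two distinct summands are archimedean equivalent, the archimedean-class ordering of the summands of $G(x)$ is constant on each piece and determines a unique dominating summand $T(x)$. Lemma \ref{cont2}(3) then gives $\chi(G(x))=\chi(T(x))$ throughout the piece. If $T(x)=q_i\chi^{k_i}(x)$, then axiom $(3)$ and lemma \ref{cont1}(1) yield $\chi(T(x))=\sign(q_i)\chi^{k_i+1}(x)$, a $\chi$-function; if $T(x)$ is one of the fixed summands $-r_j a_j$ or $-\alpha_0$, then $\chi(T(x))$ is a constant.

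The main obstacle is identifying $C$ correctly and verifying unique dominance between consecutive points of $C$: the collisions $\chi^{k_i}(x)=a_j$ can cancel coefficients and switch the dominant term across a critical point, while the $\alpha_0$-contribution (when $\alpha_0\notin \Span_{\mathbb{Q}}\chi(\Gamma^{<0})$) requires a separate archimedean comparison with each variable term $\chi^{k_i}(x)$, since elements of $\chi(\Gamma^{<0})$ and $\alpha_0$ need not lie in a common $\mathbb{Q}$-linear span. The discreteness of $\chi(\Gamma^{<0})$ together with the injectivity of each $\chi^{k_i}$ on $\Dom_G$ (from the lemma preceding \ref{teo0}) guarantees that $C$ is finite, which is exactly what makes the resulting decomposition satisfy the piecewise requirement.
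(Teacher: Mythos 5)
Your overall plan --- partition $\chi(\Gamma^{<0})$ so that on each piece a single summand of $G$ strictly dominates in archimedean class and then apply lemma \ref{cont2}(3) --- is reasonable, but your critical set $C$ does not achieve it, and the gap is not cosmetic. You add to $C$ only the points where some $\chi^{k_i}(x)$ is \emph{archimedean equivalent} to $\alpha_0$. In a general model of $T_{pdg}$, however, the class $[\alpha_0]$ need not be realized by any element of $\chi(\Gamma^{<0})$ at all: lemma \ref{case2} produces models containing a negative $a$ with $[a]\notin[\Gamma]$, and $[a]$ may lie strictly between the classes of two consecutive elements of $\chi(\Gamma^{<0})$, or realize a cut in $\{[y]:y\in\chi(\Gamma^{<0})\}$. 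Concretely, in such an extension of $\Gamma_{\log}$ take $G(x)=x+a$ with $[-\ell_2]>[a]>[-\ell_3]$: then the dominant summand is $x$ at $x=-\ell_2$ and $a$ for all $x\geq-\ell_3$, there is no equivalence point, your $C$ is trivial, and on your single piece the claim ``between consecutive critical points no two distinct summands are archimedean equivalent, hence the ordering is constant'' is false --- $\chi(G(x))$ genuinely switches from $\chi(x)$ to the constant $\chi(a)$ inside that piece. A second, smaller problem: nothing prevents $[\alpha_0]=[a_j]$ for some $j$; if that shared class is maximal on a piece, your list of summands has no strictly dominant member and lemma \ref{cont2}(3) does not apply as invoked. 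That one is repaired by lumping the entire constant part $-\alpha$ into a single summand, but the first problem then persists for the comparison of $[\chi^{k_1}(x)]$ with $[\alpha]$.

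The paper's proof avoids exactly this by never comparing archimedean classes of $\theta(x)=\sum_i q_i\chi^{k_i}(x)$ and $\alpha$ directly: it compares $|\chi(\theta(x))|=|\chi^{k_1+1}(x)|$ with $|\chi(\alpha)|=|x_0|$ for the unique $x_0\in\chi(\Gamma^{<0})$ with $|\chi(\alpha)|=|x_0|$. Both quantities lie in the discrete set $-\chi(\Gamma^{<0})$, so the comparison is strict except at the single point $x_1$ with $\chi^{k_1+1}(x_1)=x_0$ (when it exists in $\Dom_G$); a strict inequality between the $\chi$-values forces the corresponding strict valuation inequality and determines $\chi(G(x))$ via lemma \ref{cont1}, the crossing point is thereby pinned to an actual element of $\chi(\Gamma^{<0})$ rather than to a cut, and the one ambiguous point is a legitimate singleton piece whose value is settled by the finite recursion $\alpha_1=\alpha+q_1\chi^{k_1}(x_1)$, $\alpha_2=\alpha_1+q_2\chi^{k_2}(x_1)$, and so on. To repair your argument you would need to reroute the dominance test through $\chi$ in this way; as written, the partition you construct can fail to refine the true one.
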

\begin{proof}
If $G(x)$ is constant, then $\chi(G(x))$ is also a constant, which means that $\chi(G(x))$ is a $\chi$-function. And if $G(x)=\sum\limits_{i=1}^n q_i\chi^{k_i}(x)+\alpha$ then  clearly, for all $x\in \chi(\Gamma^{<0})\setminus \Dom_G$ we have $\chi(G(x))=\chi(\infty)=\infty$ which is constant. So, from now on $G(x)$ will be a $\chi$-function of the form $G(x)=\sum\limits_{i=1}^n q_i\chi^{k_i}(x)+\alpha$ and we will focus on the values of $G$ on $\Dom_G$.

If $\alpha=0$ by the above lemma $\chi(G(x))=\sign(q_1)\chi(\chi^{k_1}(x))$ for all $x\in \Dom_G$. Putting now $\alpha\neq 0$ and $\theta(x)=\sum\limits_{i=1}^n q_i\chi^{K_i}(x)$ we have $\chi(G(x))=\chi(\theta(x)+\alpha)$.

Without loss of generality we can assume $q_1>0$. Thus $\chi(\theta(x))=\chi(\chi^{k_1}(x))$ for all $x\in \Dom_G$, and there is a unique $x_0\in \chi(\Gamma^{<0})$ such that $|\chi(\alpha)|=|x_0|$.  Thus we have two possibilities:

\begin{enumerate}
\item [$(1)$] $|\chi(\theta(x))|\neq |x_0|$ for all $x\in \Dom_G$. If $\chi(\alpha)=x_0$ then either $x_0<\chi(\theta(x))$ for all $x\in \Dom_G$ and $\chi(G(x))=x_0$ for all $x\in \Dom_G$, or there is a unique $x_1\in \Dom_G$ such that \[\chi(\theta(x_1))<x_0<\chi(\theta(\chi(x_1))\]
and 
\[\chi(G(x))=
\begin{cases}
[\chi(\chi^{k_1}(x)) \text{ if } x<x_1\\
x_0 \text{       if } x>x_1\\
\end{cases} 
\]
Now, if $\chi(\alpha)=-x_0$ then $\chi(G(x))=\chi(\chi^{k_1}(x))$ for all $x\in \Dom_G$.
\item [$(2)$] There is a unique $x_1\in \Dom_G$ such that $|\chi(\theta(x_1))|= |x_0|$. We can see that $\chi(G(x))$ has the same behavior for all $x\neq x_1$ that in the previous case . However, if $x=x_1$ then we have the following cases: If $\chi(\alpha)=x_0$ then $\chi(G(x))=x_0$, but if $\chi(\alpha)=-x_0$ then:
Let $\alpha_1=\alpha+q_1\chi^{k_1}(x)$. If  $\chi(\alpha_1)=\chi(\chi^{k_1}(x))$ then $\chi(G(x))=\chi(\chi^{k_1}(x))$. In other case, we compare $|\chi(\chi^{k_2}(x))|$ with $|\chi(\alpha_1)|$. If $|\chi(\chi^{k_2}(x))|\neq |\chi(\alpha_1)|$ then the value of $G(x)$ is determined by the $\min\lbrace \sign(q_2)\chi(\chi^{k_2}(x)), \chi(\alpha_1) \rbrace$. If $|\chi(\chi^{k_2}(x))|= |\chi(\alpha_1)|$ then we have two cases, if $\sign(q_2)\chi(\chi^{k_2}(x))=\chi(\alpha_1)$ then $\chi(G(x))=\sign(q_2)\chi(\chi^{k_2}(x))$, but if not, then we define $\alpha_2=\alpha_1+q_2\chi^{k_2}(x)$ and repeat the analysis done for $\alpha_1$. This process is finite because in the possible last step we analyze $\alpha_n=\alpha_{n-1}+q_n\chi^{k_n}(x)$.
\end{enumerate}
In conclusion, for each $\chi$-function $G(x)=\sum\limits_{i=1}^n q_i\chi^{k_i}(x)+\alpha$, $\chi(G(x))$  is given piecewise by $\chi$-functions.\\

\end{proof}
\noindent
From lemmas \ref{teo0}, \ref{teo1} and \ref{teo2} we obtain:\\

\begin{proposition}\label{prop1} Let $t(x):\Gamma\rightarrow \Gamma$ be an $L_{pdg*}$-term and $G:\chi(\Gamma^{<0})\rightarrow\Gamma$ the restriction of $t$ to $\chi(\Gamma^{<0})$. Then $G$ is given piecewise by $\chi$-functions.
\end{proposition}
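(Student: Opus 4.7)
The plan is to prove the proposition by structural induction on the $L_{pdg*}$-term $t(x)$. For the base cases, if $t(x)$ is one of the constant symbols $0$, $c$, or $\infty$, then $G$ is a constant function and hence a (constant) $\chi$-function on all of $\chi(\Gamma^{<0})$; if $t(x)=x$ then $G=\chi^0$ is itself a $\chi$-function. In each base case $G$ is globally, hence piecewise, a $\chi$-function.

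For the inductive step, assume that the restrictions $G_1, G_2$ of sub-terms $t_1, t_2$ are given piecewise by $\chi$-functions on finite partitions of $\chi(\Gamma^{<0})$ into intervals of the form $[a_i, a_{i+1})_\chi$ with endpoints in $\chi(\Gamma^{<0})\cup\{0\}$. For $t = t_1 + t_2$, $t = t_1 - t_2$, $t = -t_1$, and $t = \delta_n(t_1)$, I take the common refinement of the two partitions; on each interval of the refinement both $G_1$ and $G_2$ are $\chi$-functions, and by Lemma \ref{teo1} so are $G_1 \pm G_2$, $-G_1$ and $\delta_n(G_1)$. For $t = \chi(t_1)$, fix a piece $I$ on which $G_1$ is a $\chi$-function; by Lemma \ref{teo2}, $\chi \circ G_1$ is given piecewise by $\chi$-functions on $\chi(\Gamma^{<0})$, and intersecting these sub-pieces with $I$ and concatenating over all pieces produces the required partition for $\chi(t_1)$ (the case $G_1$ constant is immediate since $\chi$ of a constant is a constant).

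The main obstacle is the case $t = \chi^{-1}(t_1)$, which is not directly covered by the preceding lemmas. I plan to handle it via Lemma \ref{teo0}. Fix a piece $I$ on which $G_1(x) = \sum_{i=1}^n q_i \chi^{k_i}(x) + \alpha$ (the case $G_1$ constant is immediate since $\chi^{-1}$ of a constant is a constant). Lemma \ref{teo0} forces one of two possibilities: either (i) $G_1(x) = \chi^{k_1}(x)$, in which case $\chi^{-1}(G_1(x)) = \chi^{k_1 - 1}(x)$ is again a $\chi$-function, save for the single possible exceptional point $x_0 \in I$ satisfying $\chi^{k_1}(x_0) = c$, at which $\chi^{-1}(G_1(x_0)) = \infty$; this singleton can be isolated by the sub-interval $[x_0, \chi(x_0))_\chi$, since $\chi(x_0)$ is the immediate successor of $x_0$ in $\chi(\Gamma^{<0})$ by axiom $(3)$ of $T_{pdg}$; or (ii) $G_1(\Dom_{G_1}) \cap \chi(\Gamma^{<0})$ has at most two elements, and by the injectivity of $G_1$ on $\Dom_{G_1}$ there are at most two values of $x \in I$ at which $\chi^{-1}(G_1(x)) \neq \infty$; isolating each such singleton by a one-point interval $[x, \chi(x))_\chi$ makes $\chi^{-1}(G_1(x))$ a constant on every resulting piece (either an explicit element of $\chi(\Gamma^{<0})$ or $\infty$). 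Assembling these refinements across all pieces of $G_1$'s partition yields the required piecewise $\chi$-function representation of $\chi^{-1}(t_1(x))$, completing the induction.
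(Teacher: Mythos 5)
Your proof is correct and follows essentially the same route as the paper, whose own proof is just the one-line remark that the result follows from Lemmas \ref{teo0}, \ref{teo1} and \ref{teo2} by induction on the complexity of $L_{pdg*}$-terms; you have simply carried out that induction in full, including the $\chi^{-1}$ case that the paper leaves implicit. The only quibble is in your case (ii) for $\chi^{-1}(t_1)$: besides the at most two points where $G_1(x)\in\chi(\Gamma^{<0})$ counted by Lemma \ref{teo0}, you should also account for the (at most one, by injectivity) point with $G_1(x)=0$ and, depending on how $\chi^{-1}$ is read on $\chi(\Gamma^{>0})$, at most two further points obtained by applying Lemma \ref{teo0} to $-G_1$ --- still finitely many singletons, so your isolation argument goes through unchanged.
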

\begin{proof}
The proof follows from lemmas \ref{teo0}, \ref{teo1} and \ref{teo2} doing induction on the complexity of the $L_{pdg*}$-terms.
\end{proof}
\noindent
As a consequence of this proposition and the quantifier elimination in $T_{pdg*}$ we have:\\

\begin{corollary}\label{coro1} Every definable $A\subseteq \chi(\Gamma^{<0})$ is a finite union of intervals in $\chi(\Gamma^{<0})$ and singletons.\\

\end{corollary}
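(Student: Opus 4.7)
The plan is to combine the quantifier elimination for $T_{pdg*}$ proved just above with Proposition~\ref{prop1}. Since $T_{pdg*}$ admits quantifier elimination, the set $A$ is cut out of $\chi(\Gamma^{<0})$ by a quantifier-free $L_{pdg*}$-formula $\varphi(x,\bar{a})$ with parameters $\bar{a}\in\Gamma$. Putting $\varphi$ in disjunctive normal form reduces matters to the case in which $\varphi$ is a finite conjunction of atomic formulas and their negations; since the class of subsets of $\chi(\Gamma^{<0})$ that are finite unions of intervals and singletons is closed under finite unions and finite intersections, it suffices to handle a single conjunction, and in fact each atomic subformula.

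Next, I would invoke Proposition~\ref{prop1} on every term appearing in $\varphi$. Each such term, restricted to $\chi(\Gamma^{<0})$, is piecewise a $\chi$-function; taking a common refinement of the resulting finite partitions produces a partition
\[
\chi(\Gamma^{<0}) = I_1\cup\cdots\cup I_N
\]
into intervals such that on each $I_j$ every term of $\varphi$ coincides with a fixed $\chi$-function. By further refinement (adjoining the finitely many boundary points $\chi^{-k}(c)$ for the exponents $k$ that appear) I may assume that each $I_j$ lies either entirely inside or entirely outside the domain $\Dom_G$ of every relevant $\chi$-function $G$, so that the sentinel value $\infty$ does not interact with the order comparisons on any single piece.

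On a single piece $I_j$, an atomic subformula of $\varphi$ can be rewritten, after moving everything to one side, as $F(x)=0$, $F(x)<0$, or $F(x)>0$, where $F=t_1-t_2$ is a $\chi$-function by closure of $\chi$-functions under subtraction (Lemma~\ref{teo1}). If $F$ is constant on $I_j$, its solution set is $I_j$ or $\emptyset$. Otherwise, by the injectivity and strict monotonicity statements for non-constant $\chi$-functions on $\Dom_F$ (the lemma preceding Lemma~\ref{teo0}), the set $\{x\in I_j:F(x)=0\}$ has at most one element and $\{x\in I_j:F(x)<0\}$ is an initial or final segment of $I_j$; the negations of these conditions are of the same form. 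Intersecting such sets across the finitely many atomic subformulas in the conjunction, and then taking the union over the finitely many pieces $I_j$, produces $A$ as a finite union of intervals in $\chi(\Gamma^{<0})$ and singletons.

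The step I expect to require the most care is the refinement of the partition $\{I_j\}$ so that the piecewise description of each term is uniform and, simultaneously, each piece is either entirely inside or entirely outside every $\Dom_G$ in sight. Without this, a single piece could mix finite values of $F$ with values equal to $\infty$, and the clean ``constant or strictly monotone and injective'' dichotomy would break down; once this is arranged, the rest of the argument is a straightforward reduction via quantifier elimination and disjunctive normal form.
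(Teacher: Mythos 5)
Your proposal is correct and follows exactly the route the paper intends: the paper states the corollary as an immediate consequence of Proposition~\ref{prop1} and the quantifier elimination of $T_{pdg*}$ without writing out the details, and your argument (disjunctive normal form, a common refinement on which every term is a single $\chi$-function and each piece lies inside or outside every relevant $\Dom_G$, then the constant-or-strictly-monotone-and-injective dichotomy for $\chi$-functions) is precisely the elaboration of that one-line proof.
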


\begin{remark} For each model $(\Gamma,\chi)$ of $T_{pdg*}$, the definable set $\chi(\Gamma^{<0})$ is infinite and discrete, so $(\Gamma,\chi)$ is not weakly o-minimal.\\

\end{remark}
\noindent
Now, if we expand the language $L_{pdg*}$ by a new constant symbol $d$ and define the theory $T_{pdg**}$ as 
\[T_{pdg*}\cup\lbrace \chi(d)=c\rbrace\]
then $T_{pdg**}$ has quantifier elimination and a universal axiomatization. Thus, from proposition \ref{prop1} we have the following:

\begin{theorem} Let $G:\chi(\Gamma^{<0})\rightarrow\Gamma$ be a definable function. Then $G$ is given piecewise by $\chi$-functions. 
\end{theorem}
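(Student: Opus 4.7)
The plan is to combine the quantifier elimination and universal axiomatization of $T_{pdg**}$ with Proposition \ref{prop1}. The standard model-theoretic principle I would invoke is that when a theory admits both quantifier elimination and a universal axiomatization, every definable function is piecewise equal to a term function of the language. Applied to the definable $G$, this produces a finite partition $A_{1},\dots,A_{n}$ of the domain $\chi(\Gamma^{<0})$ into definable subsets together with $L_{pdg**}$-terms $t_{1},\dots,t_{n}$ such that $G\!\upharpoonright\! A_{i}=t_{i}\!\upharpoonright\! A_{i}$ for each $i$.

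To justify this principle I would argue as follows. By quantifier elimination the graph of $G$ is defined by a quantifier-free formula $\varphi(x,y)$ with parameters. Fix a model $(\Gamma,\chi)$ and an element $a\in \chi(\Gamma^{<0})$; the universal axiomatization implies that the $L_{pdg**}$-substructure $\langle a\rangle$ generated by $a$ together with the parameters is itself a model of $T_{pdg**}$. Since $\varphi$ is quantifier-free and $a\in\langle a\rangle$, the unique witness $G(a)$ lies in $\langle a\rangle$, and hence $G(a)=t(a)$ for some $L_{pdg**}$-term $t$. A compactness/case-split argument over the finitely many possible relevant quantifier-free types then yields the finite partition together with a uniform choice of terms on each piece; each piece is definable, so by Corollary \ref{coro1} it is a finite union of intervals in $\chi(\Gamma^{<0})$ and singletons.

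Having obtained such a partition, I would apply Proposition \ref{prop1} to each term $t_{i}$: its restriction to $\chi(\Gamma^{<0})$ is given piecewise by $\chi$-functions. Intersecting these finer partitions with $A_{i}$ and taking the union over $i$ produces a finite partition of $\chi(\Gamma^{<0})$ on which $G$ coincides with a $\chi$-function, which is the desired conclusion.

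The principal obstacle is the intermediate step that definable functions are piecewise given by terms, and in particular the role of the new constant $d$ with $\chi(d)=c$. Without $d$, the substructure generated by an element $a\in \chi(\Gamma^{<0})$ could fail to contain $\chi^{-k}(a)$ as soon as the chain of preimages reaches $c$, because $\chi^{-1}(c)$ would not be expressible as a term (recall that $\chi^{-1}(c)=\infty$ in the $L_{pdg*}$ interpretation). The constant $d$ remedies exactly this, guaranteeing that the iterated preimages needed to write $G(a)$ already sit inside the term-generated substructure and hence that $G(a)$ is captured by an honest $L_{pdg**}$-term in $a$, $d$, and the parameters.
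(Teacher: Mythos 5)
Your proposal follows essentially the same route as the paper: both invoke the fact that quantifier elimination plus a universal axiomatization for $T_{pdg**}$ forces a definable function to be given piecewise by $L_{pdg**}$-terms (the paper cites Corollary B.11.15 of the Aschenbrenner--van den Dries--van der Hoeven book for exactly this), and then both apply Proposition \ref{prop1} to each term. Your additional sketch of why the term-representation principle holds, and your observation about the role of the constant $d$ with $\chi(d)=c$, are consistent with and complementary to the paper's shorter argument.
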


\begin{proof} Since $T_{pdg**}$ has quantifier elimination and has a universal axiomatization, then by corollary B.11.15 of \cite{Van6}  there are $L_{pdg**}$-terms $t_1(x),t_2(x),...,t_n(x)$ such that $G(x)=t_k(x)$ for $x\in \chi(\Gamma^{0})$ and some $k\in \lbrace 1,2,...,n\rbrace$. Thus, by proposition \ref{prop1} the restriction of $G(x)$ to 
\[\Dom_k=\lbrace x\in \chi(\Gamma^{<0}):G(x)=t_k(x)\rbrace\subseteq \chi(\Gamma^{0}),\]
is given piecewise by $\chi$-functions.

\end{proof}

\subsection{Simple extensions}

Let $\mathbb{M}=(M, \chi_M)$ be a monster model of $T_{pdg*}$ and $(\Gamma,\chi)$ a small submodel of $\mathbb{M}$. In this section we show that each simple extension $\Gamma\langle a\rangle$ for $a\in M\setminus \Gamma$ of $\Gamma$ is isomorphic to a specific extension of $\Gamma$ obtained utilizing the extensions given in lemmas \ref{kul4} and \ref{case4}.

To do that, first we will combine the lemmas \ref{kul4} and \ref{case4} to define extensions of $\Gamma$ which are built including many copies of $\mathbb{Z}$ in a specific and ordered way. Specifically, if $scut^{op}(\chi(\Gamma^{<0}))$ denote the linear ordered set of the elements $G\subseteq \chi(\Gamma^{<0})$ such that $\chi(\Gamma^{<0})\setminus G$ is a special cut of $\chi(\Gamma^{<0})$ and where $G_1\leq G_2$ in $scut^{op}(\chi(\Gamma^{<0}))$ if and only if $G_2\subseteq G_1$, then given an ordinal $\delta$ and an increasing function $f:\delta\rightarrow scut^{op}(\chi(\Gamma^{<0}))\setminus \lbrace \chi(\Gamma^{<0})\rbrace$, for each $f(\alpha)$ with $\alpha<\delta$ we want to include a specific copy of $\mathbb{Z}$ between $\chi(\Gamma^{<0})\setminus f(\alpha)$ and $f(\alpha)$. Moreover, if $\delta=\beta+1$, it may happen that $f(\beta)=\emptyset$, which means that we have to include a copy of $\mathbb{Z}$ at the end of $\chi(\Gamma^{<0})$. \\

\begin{lemma} Let $\delta$ be an ordinal. Given a increasing function $f:\delta\rightarrow scut^{op}(\chi(\Gamma^{<0}))\setminus \lbrace \chi(\Gamma^{<0})\rbrace$, there is a model $(\Gamma_f,\chi_f)$ of $T_{pdg}$ and a family $(b_{k,\rho})_{k\in \mathbb{Z},\rho<\delta}$ in $\chi(\Gamma_f^{<0})$ such that:

\begin{enumerate}
\item [$(1)$] $(\Gamma,\chi)\subset (\Gamma_f,\chi_{f})$,
\item [$(2)$] $\Gamma^{<f(\rho)}<b_{k,\rho}<f(\rho)$ and $\chi_f(b_{k,\rho})=b_{k+1,\rho}$ for all $k\in \mathbb{Z},$ and $\rho<\delta$,
\item [$(3)$] $b_{k_1,\rho_1}<b_{k_2,\rho_2}$ for all $k_1, k_2\in \mathbb{Z}$ and $\rho_1<\rho_2<\delta$, and
\item [$(4)$] for any embedding $\phi$ of $(\Gamma,\chi)$ into a model $(\Gamma^*,\chi^*)$ of $T_{pdg}$ and any family $(b^*_{n,k})_{n\in \mathbb{Z},k<\delta}$ in $\chi(\Gamma^{*<0})$such that $\phi(\Gamma^{<f(\rho)})<b_{k,\rho}<\phi(f(\rho))$ and $\chi^*(b^*_{k,\rho})=b^*_{k+1,\rho}$ for all $k\in \mathbb{Z},$ and $\rho<\delta$, and $b^*_{k_1,\rho_1}<b^*_{k_2,\rho_2}$ for all $k_1, k_2\in \mathbb{Z}$ and $\rho_1<\rho_2<\delta$, there is a unique embedding $\phi'$ from $(\Gamma_f,\chi_{f})$ into $(\Gamma^*,\chi^*)$ which extends $\phi$ and such that $\phi'(b_{k,\rho})=b^*_{k,\rho}$ for all $k\in \mathbb{Z}$ and $\rho<\delta$.
\end{enumerate}

\end{lemma}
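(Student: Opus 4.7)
The natural plan is transfinite induction on $\delta$, iteratively applying lemma \ref{kul4} (at successors where $f(\eta)=\emptyset$) and lemma \ref{case4} (at successors where $f(\eta)\neq\emptyset$) to adjoin each new family $(b_{k,\rho})_{k\in\mathbb{Z}}$, and taking directed unions at limit stages. The base case $\delta=0$ is vacuous with $(\Gamma_f,\chi_f)=(\Gamma,\chi)$.

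For a successor $\delta=\eta+1$, suppose $(\Gamma_{f|_\eta},\chi_{f|_\eta})$ together with the family $(b_{k,\rho})_{k\in\mathbb{Z},\,\rho<\eta}$ have already been built. If $f(\eta)=\emptyset$ — which, since $\emptyset$ is the maximum of $scut^{op}(\chi(\Gamma^{<0}))\setminus\lbrace\chi(\Gamma^{<0})\rbrace$, forces $\eta=\delta-1$ — I apply lemma \ref{kul4} to append $(b_{k,\eta})_{k\in\mathbb{Z}}$ strictly above $\chi(\Gamma_{f|_\eta}^{<0})$. If $f(\eta)\neq\emptyset$, I set
\[
G_\eta \;=\; \lbrace a\in\chi(\Gamma_{f|_\eta}^{<0}) : a<f(\eta)\rbrace.
\]
Because the order on $scut^{op}$ is reverse inclusion, strict monotonicity of $f$ yields $f(\eta)\subseteq f(\rho)$ for every $\rho<\eta$, so every previously adjoined $b_{k,\rho}$ (with $\rho<\eta$) lies strictly below $f(\eta)$ and therefore belongs to $G_\eta$. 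Together with the fact that $\chi(\Gamma^{<0})\setminus f(\eta)$ is a special cut of $\chi(\Gamma^{<0})$, and that $\chi_{f|_\eta}(b_{k,\rho})=b_{k+1,\rho}$ remains in $G_\eta$, this shows that $G_\eta$ is a special cut of $\chi(\Gamma_{f|_\eta}^{<0})$. Lemma \ref{case4} then supplies $(\Gamma_f,\chi_f)$ with $(b_{k,\eta})_{k\in\mathbb{Z}}$ filling the gap between $G_\eta$ and $\Gamma_{f|_\eta}^{>G_\eta}$. In both subcases the new $b_{k,\eta}$ sit strictly above every previous $b_{k',\rho}$, which preserves condition (3).

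For a limit $\delta$ I take $(\Gamma_f,\chi_f)=\bigcup_{\eta<\delta}(\Gamma_{f|_\eta},\chi_{f|_\eta})$. Divisibility, centripetality, the invariance of the least element $c$ of $\chi(\Gamma_f^{<0})$ (since every new $b_{k,\rho}$ is strictly above $c$), and the bijectivity of $\chi_f$ from $\chi(\Gamma_f^{<0})$ onto $\chi(\Gamma_f^{<0})^{>c}$ with immediate-successor structure, all survive directed unions, so $(\Gamma_f,\chi_f)\models T_{pdg}$. The universal property (4) is then propagated through the induction by the universal properties of lemmas \ref{kul4} and \ref{case4}: at each successor step there is a unique extension of $\phi_{f|_\eta}$ sending $b_{k,\eta}$ to $b^*_{k,\eta}$, and at limit stages these embeddings cohere into a single union, unique by uniqueness at every predecessor. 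The delicate point, which must be re-verified at every successor of type $f(\eta)\neq\emptyset$, is that $G_\eta$ genuinely inherits the special-cut property in the enlarged $\chi(\Gamma_{f|_\eta}^{<0})$; this is exactly what the strict monotonicity of $f$ together with the action $\chi_{f|_\eta}(b_{k,\rho})=b_{k+1,\rho}$ guarantees, and it is the one step where the construction could, a priori, fail to produce a model of $T_{pdg}$.
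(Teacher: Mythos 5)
Your proposal is correct and takes essentially the same route as the paper: transfinite induction on $\delta$, applying Lemma \ref{kul4} when $f(\eta)=\emptyset$ and Lemma \ref{case4} otherwise, with unions at limits; the paper's own proof is merely a two-line sketch of this induction whose key observation is precisely the point you isolate, namely that the cut determined by $f(\eta)$ remains a special cut in the enlarged group $\Gamma_{f|_\eta}$. Your elaboration (monotonicity of $f$ placing earlier $b_{k,\rho}$ inside $G_\eta$, closure of $G_\eta$ under $\chi$, preservation of the $T_{pdg}$ axioms under unions) fills in details the paper omits, and is sound.
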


\begin{proof} The proof is by induction on $\delta$ and we only have to observe that for the successor step, if $\delta=\beta+1$, then by inductive hypothesis there is an extension $(\Gamma_{f|\beta}, \chi_{f|\beta})$ of  $(\Gamma,\chi)$ corresponding to 
\[f|\beta:\beta\rightarrow scut^{op}(\chi(\Gamma^{<0}))\setminus \lbrace \chi(\Gamma^{<0})\rbrace\]
and $f(\beta)\in scut^{op}(\chi(\Gamma_{f|\beta}^{<0}))$.
\end{proof}
\noindent
Now, to study the simple extension $\Gamma\langle a\rangle$ of $\Gamma$ with $a\in M\setminus \Gamma$, we consider first if $(\Gamma\oplus \mathbb{Q}a)^{<0}$ is closed under $\chi$ and to do that we use   the set 
\[\Delta_{\Gamma}=\chi((\Gamma+\mathbb{Q}^{\neq 0}a)^{<0})=\lbrace \chi(x+qa): q\in \mathbb{Q}^{\neq 0} \text{, } x\in \Gamma \text{ and  } x+qa<0\rbrace.\]
Specifically, we have the following results:\\

\begin{lemma} 
\mbox{}
\begin{enumerate}
\item [$(1)$] For all $x\in M^{<0}$ and $y\in \Delta_{\Gamma}$ with $x<y$,  $x\in \Delta_{\Gamma}$ if and only if $x\in \chi(\Gamma^{<0})$.
\item [$(2)$] For all $x\in \chi(\Gamma^{<0})$ and $y\in \Delta_{\Gamma}\cap \chi(\Gamma^{<0})$, if $x<y$ then $x\in \Delta_{\Gamma}\cap \chi(\Gamma^{<0})$.
\item [$(3)$] $\card(\Delta_{\Gamma}\setminus \chi(\Gamma^{<0}))\leq 1$.
\item [$(4)$] If $\Delta_{\Gamma}\setminus \chi(\Gamma^{<0})=\lbrace b \rbrace$ with $b\in \chi(M^{<0})\setminus \chi(\Gamma^{<0})$, then $b$ realize the special cut 
\[(\Delta_{\Gamma}\cap \chi(\Gamma^{<0}), \chi(\Gamma^{<0})\setminus \Delta_{\Gamma})\]
in $\chi(\Gamma^{<0})$ 
\end{enumerate}

\end{lemma}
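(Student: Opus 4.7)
The plan is to prove the four items in the order $(3)\to(1)\to(2)\to(4)$, with the maximality of the (unique) element $b\in\Delta_\Gamma\setminus\chi(\Gamma^{<0})$ (when nonempty) as a key intermediate step coming out of the proof of (3). The workhorse throughout is axiom (4) of precontraction groups: for any $u\in M^{<0}$, if $[u]\in[\Gamma]$, then picking $v\in\Gamma^{<0}$ with $[v]=[u]$ and applying axiom (4) gives $\chi(u)=\chi(v)\in\chi(\Gamma^{<0})$; contrapositively, $\chi(u)\notin\chi(\Gamma^{<0})$ forces $[u]\notin[\Gamma]$. Write $\mathcal A(a):=\{[u+qa]:u\in\Gamma,\,q\in\mathbb Q^{\ne 0}\}$ for the set of archimedean classes realised in $\Gamma+\mathbb Q^{\ne 0}a$.

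For (3), suppose for contradiction that $b_1\ne b_2$ both lie in $\Delta_\Gamma\setminus\chi(\Gamma^{<0})$ with $b_i=\chi(u_i+q_ia)$ and $u_i+q_ia<0$. By axiom (4), $[u_1+q_1a]\ne[u_2+q_2a]$, and both classes are outside $[\Gamma]$ by the contrapositive observation. The combination $q_2(u_1+q_1a)-q_1(u_2+q_2a)=q_2u_1-q_1u_2$ lies in $\Gamma$, yet its archimedean class is $\max([u_1+q_1a],[u_2+q_2a])$ (the larger one dominates when they differ), which is outside $[\Gamma]$ — contradiction. A parallel argument shows the class $\alpha:=[u+qa]$ corresponding to $b$ is the minimum of $\mathcal A(a)$: any $\beta=[u'+q'a]<\alpha$ in $\mathcal A(a)$ would force $u/q-u'/q'=(u/q+a)-(u'/q'+a)\in\Gamma$ to carry class $\max(\alpha,\beta)=\alpha\notin[\Gamma]$. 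Since smaller archimedean class corresponds to larger $\chi$-value on negatives (lemma~\ref{cont2}(1)), this yields $b=\max\Delta_\Gamma$.

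Direction $(\Rightarrow)$ of (1) is immediate: $x\in\Delta_\Gamma\setminus\chi(\Gamma^{<0})$ would force $x=b=\max\Delta_\Gamma$, contradicting $x<y\in\Delta_\Gamma$. For $(\Leftarrow)$, write $x=\chi(v)$ with $v\in\Gamma^{<0}$; I construct $w\in(\Gamma+\mathbb Q^{\ne 0}a)^{<0}$ with $[w]=[v]$. If $[v]\ge[a]$, take $w=v+qa$ for a small nonzero $q\in\mathbb Q$ with $q|a|<|v|$ (ensuring $w<0$), avoiding the at most one value of $q$ that causes $[v+qa]<[v]$ (uniqueness: two such $q_0,q_0'$ would give $(q_0-q_0')a$ of class $[a]$ while a bound of class difference forces $<[a]$). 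If $[v]<[a]$, use $x<y=\chi(z)$ with $z=u_0+q_0a$ and lemma~\ref{cont2}(1) to obtain $[v]>[z]=[u_0/q_0+a]$; the inequality $[u_0/q_0+a]<[a]$ then forces $[u_0/q_0]=[a]$, so setting $r:=u_0/q_0\in\Gamma$ and $w:=q(r+v+a)=q(r+v)+qa$ for any $q\in\mathbb Q^{>0}$ gives $w\in\Gamma+\mathbb Q^{\ne 0}a$, $w<0$ (the dominant term in $r+v+a$ is $v$), and $[w]=[v]$.

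Part (2) is immediate from $(\Leftarrow)$ of (1). For (4), set $G:=\Delta_\Gamma\cap\chi(\Gamma^{<0})$. Maximality of $b$ gives $G<b$, and for $c'\in\chi(\Gamma^{<0})\setminus\Delta_\Gamma$, the inequality $c'<b$ combined with (1) would force $c'\in\Delta_\Gamma$, a contradiction; so $c'>b$. $G$ is a lower cut by (2). For $\chi(G)\subseteq G$: given $t\in G$, $\chi(t)\in\chi(\Gamma^{<0})$ is the immediate successor of $t$ in $\chi(M^{<0})$ (axiom (3) of Definition~\ref{clog} applied to $M$), so $\chi(t)\le b$; as $\chi(t)\in\chi(\Gamma^{<0})$ and $b\notin\chi(\Gamma^{<0})$, in fact $\chi(t)<b$, whence (1) gives $\chi(t)\in\Delta_\Gamma$, so $\chi(t)\in G$. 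The main obstacle is (3) together with the minimality of $\alpha$ in $\mathcal A(a)$; both reduce to examining archimedean classes of $\Gamma$-linear combinations once the $\chi$-to-class translation via axiom (4) is made.
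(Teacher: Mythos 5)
Your proof is correct, but it takes a genuinely different route from the paper's. The paper proves item $(1)$ first, by a direct two-line computation with the identities of Lemma~\ref{cont1}: after rescaling a representative of $x$ so that it has the same coefficient of $a$ as the given representative $b+qa$ of $y$, the hypothesis $x<y<0$ allows one to subtract (resp.\ add) $b+qa$ without changing the $\chi$-value, which converts a witness of $x\in\Delta_\Gamma$ into one of $x\in\chi(\Gamma^{<0})$ and conversely; items $(2)$--$(4)$ are then read off as immediate corollaries, with $(3)$ being exactly your observation that two elements of $\Delta_\Gamma\setminus\chi(\Gamma^{<0})$ would contradict $(1)$. You instead argue at the level of archimedean classes: you establish $(3)$ and the maximality of the exceptional element $b$ by showing that at most one class in $\mathcal{A}(a)$ lies outside $[\Gamma]$ and that such a class must be the minimum of $\mathcal{A}(a)$ (because a $\mathbb{Q}$-linear combination eliminating $a$ lands in $\Gamma$ yet would carry a class outside $[\Gamma]$), and you then need a separate, more laborious construction for the implication $x\in\chi(\Gamma^{<0})\Rightarrow x\in\Delta_\Gamma$, with a case split on $[v]$ versus $[a]$ and an explicit representative of the class $[v]$ inside $(\Gamma+\mathbb{Q}^{\neq 0}a)^{<0}$. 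Both arguments are sound. The paper's translation trick produces the required representative in one step; your version makes explicit the structural fact that $\Gamma+\mathbb{Q}a$ realizes at most one archimedean class outside $[\Gamma]$, and that it is the smallest class realized (essentially the picture exploited later for simple extensions), at the cost of the case analysis and the auxiliary observation that at most one rational $q$ can cause cancellation in $[v+qa]$. Your verification of $(4)$ is also more detailed than the paper's one-line appeal to $(2)$ and $(3)$, and correctly checks all three required conditions ($G$ is a lower cut, $\chi(G)\subseteq G$, and $G<b<\chi(\Gamma^{<0})\setminus G$).
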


\begin{proof}
\begin{enumerate}

\item [$(1)$] Let $y=\chi(b+qa)$ for some $b\in \Gamma$ and $q\in \mathbb{Q}^{\neq 0}$. If $x\in \Delta_{\Gamma}$, then $x=\chi(d+ra)$ for some $d\in \Gamma$ and $r\in \mathbb{Q}^{\neq 0}$. Without loss of generality we can assume that $q,r>0$. Thus, $x=\chi(\dfrac{q}{r}d+qa)$ and since $x<y<0$ then
\[x=\chi(\dfrac{q}{r}d+qa)=\chi((\dfrac{q}{r}d+qa)-(b+qa))=\chi(\dfrac{q}{r}d-b)\in \chi(\Gamma^{<0}).\]
On the other hand, if $x\in \chi(\Gamma^{<0})$ then $x=\chi(d)$ for some $d\in \Gamma^{<0}$. Thus
\[x=\chi(d)=\chi(d+(b+qa))=\chi((d+b)+qa)\in \Delta_{\Gamma}.\]
If $q<0$ or $r<0$, the demonstration is similar.
\item [$(2)$] It follows from $(1)$.
\item [$(3)$] If we assume that there are $x,y\in \Delta_{\Gamma}\setminus\chi(\Gamma^{<0})$, with $x< y$, then since $x,y\in \Delta_{\Gamma}$ then by item $(1)$ we obtain that $x\in \chi(\Gamma^{<0})$, a contradiction. 
\item [$(4)$] It follows by items $(2)$ and $(3)$.
\end{enumerate}
\end{proof}
\noindent
As a consequence of the above, we have two possibilities $\chi(\Delta_{\Gamma})\subseteq \Delta_{\Gamma}$ or  $\chi(\Delta_{\Gamma})\setminus \Delta_{\Gamma}\neq \emptyset$. Hence it follows that:

\begin{corollary} Exactly one of the following is true:
\begin{enumerate}
\item [$(1)$] There is a nonempty special cut $B$ in $\chi(\Gamma^{<0})$ such that $\Delta_{\Gamma}=B$.
\item [$(2)$] There is $b\in \chi(\Gamma^{<0})$ such that $\Delta_{\Gamma}= (\chi(\Gamma^{<0}))^{\leq b}\subseteq \chi(\Gamma^{<0})$.
\item [$(3)$] There is a nonempty special cut $B$ in $\chi(\Gamma^{<0})$ and $b\in \chi(M^{<0})\setminus \chi(\Gamma^{<0})$ such that $B<b$, $b<(\chi(\Gamma^{<0})\setminus B)$ and $\Delta_{\Gamma}=B\cup \lbrace b\rbrace$
\end{enumerate}
\end{corollary}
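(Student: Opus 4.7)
The three options distinguish $\Delta_\Gamma$ by its intersection with $\chi(\Gamma^{<0})$ and by whether it has a maximum, so my plan is to split into cases accordingly, exploiting the previous lemma as the main input. I first note that $\Delta_\Gamma$ is always nonempty: since $a\in M\setminus \Gamma$ is nonzero, taking $q=\pm 1$ with $qa<0$ and $x=0$ shows $\chi(qa)\in \Delta_\Gamma$. The principal case split is whether $\Delta_\Gamma\setminus \chi(\Gamma^{<0})$ is empty or, by item $(3)$ of the previous lemma, a singleton $\lbrace b\rbrace$ with $b\in \chi(M^{<0})\setminus \chi(\Gamma^{<0})$; this dichotomy already separates case $(3)$ from cases $(1)$ and $(2)$.

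Assume first $\Delta_\Gamma\subseteq \chi(\Gamma^{<0})$. By item $(2)$ of the previous lemma, $\Delta_\Gamma$ is a nonempty lower cut of $\chi(\Gamma^{<0})$, and I distinguish two subcases according to whether or not it has a maximum. If $\Delta_\Gamma$ has a maximum $b\in \chi(\Gamma^{<0})$, the lower cut condition yields $\Delta_\Gamma=(\chi(\Gamma^{<0}))^{\leq b}$, which is case $(2)$. If $\Delta_\Gamma$ has no maximum, I claim it is a nonempty special cut, giving case $(1)$; only closure under $\chi$ needs to be checked. Given $x\in \Delta_\Gamma$, pick $y\in \Delta_\Gamma$ with $y>x$ (which exists since $\Delta_\Gamma$ has no maximum); by axiom $(3)$ of $T_{pdg}$, $\chi(x)$ is the immediate successor of $x$ in $\chi(\Gamma^{<0})$, so $\chi(x)\leq y$, and since $\chi(x)\in \chi(\Gamma^{<0})$ and $\Delta_\Gamma$ is a lower cut in $\chi(\Gamma^{<0})$, we conclude $\chi(x)\in \Delta_\Gamma$.

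In the remaining case $\Delta_\Gamma\setminus \chi(\Gamma^{<0})=\lbrace b\rbrace$, item $(4)$ of the previous lemma tells me that $B:=\Delta_\Gamma\cap \chi(\Gamma^{<0})$ is a special cut realized by $b$; it remains to check $B\neq \emptyset$. Since $\Gamma$ is an elementary submodel of $M$, the constant $c$ is the common least element of $\chi(\Gamma^{<0})$ and $\chi(M^{<0})$, so $b\in \chi(M^{<0})\setminus \chi(\Gamma^{<0})$ forces $c<b$; item $(1)$ of the previous lemma applied to $x=c$ and $y=b$ then gives $c\in \Delta_\Gamma\cap \chi(\Gamma^{<0})=B$. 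Mutual exclusivity of the three options is immediate by inspecting the location of the maximum of $\Delta_\Gamma$: absent in case $(1)$, inside $\chi(\Gamma^{<0})$ in case $(2)$, and outside $\chi(\Gamma^{<0})$ in case $(3)$. The main subtlety is the closure argument for case $(1)$, which relies essentially on the discrete successor structure of $\chi(\Gamma^{<0})$ encoded in axioms $(2)$ and $(3)$ of $T_{pdg}$.
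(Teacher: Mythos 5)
Your proof is correct. The paper itself gives essentially no argument for this corollary: it only remarks that either $\chi(\Delta_{\Gamma})\subseteq \Delta_{\Gamma}$ or $\chi(\Delta_{\Gamma})\setminus \Delta_{\Gamma}\neq \emptyset$ and declares the trichotomy to follow from the preceding lemma. Your decomposition is organized differently -- you split first on whether $\Delta_{\Gamma}\setminus \chi(\Gamma^{<0})$ is empty (items $(3)$ and $(4)$ of the lemma) and then, in the contained case, on whether the lower cut $\Delta_{\Gamma}$ has a maximum -- and this has the advantage that each of the three alternatives lands in exactly one branch. More importantly, you supply two details the paper leaves entirely implicit and which are genuinely needed for the statement as written, since the special cuts in cases $(1)$ and $(3)$ are required to be nonempty: the closure under $\chi$ of a maximum-free lower cut of $\chi(\Gamma^{<0})$, which you correctly derive from the immediate-successor axiom $a<\chi(a)\leq b$, and the nonemptiness of $B$ in case $(3)$, which you get from $c\in \Delta_{\Gamma}$ via item $(1)$ of the lemma. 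Your exclusivity remark is also sound once one observes that a nonempty special cut can have no maximum (if $g$ were maximal then $\chi(g)\in G$ with $\chi(g)>g$, a contradiction), which is exactly the observation your final sentence relies on. In short: same inputs as the paper (the four-part lemma), but a more carefully articulated case analysis that actually proves the ``exactly one'' claim rather than asserting it.
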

\noindent
As a particular case, if $\Delta_{\Gamma}\subseteq \chi(\Gamma^{<0})$ then the ordered divisible abelian subgroup $\Gamma\oplus \mathbb{Q}a$ of $M$ is closed under $\chi$ and $\Gamma\langle a\rangle=(\Gamma\oplus \mathbb{Q}a,\chi)$. In general we have the following:\\

\begin{theorem} If $a\in M\setminus \Gamma$, then $\Gamma\langle a\rangle$ is isomorphic over $\Gamma$ to one of the following:

\begin{enumerate}
\item [$(1)$] $\Gamma_f$ for some increasing function $f:n\rightarrow scut^{op}(\Gamma)\setminus \lbrace \chi(\Gamma^{<0})\rbrace$ and some natural $n$.
\item [$(2)$] $\Gamma_f\oplus \mathbb{Q}a$ for some increasing function $f:n\rightarrow scut^{op}(\Gamma)\setminus \lbrace \chi(\Gamma^{<0})\rbrace$ and some natural $n$
\item [$(3)$]$\Gamma_f\oplus \mathbb{Q}a$ for some increasing function $f:\omega\rightarrow scut^{op}(\Gamma)\setminus \lbrace \chi(\Gamma^{<0})\rbrace$ 
\end{enumerate}
\end{theorem}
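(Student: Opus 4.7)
The plan is to construct $\Gamma\langle a\rangle$ by recursion, producing an increasing chain $(\Gamma_\rho)_{\rho \leq \delta}$ of submodels of $M$ with $\Gamma_0 = \Gamma$ and each $\Gamma_\rho$ isomorphic over $\Gamma$ to $\Gamma_{f|\rho}$ for a suitable increasing function $f : \delta \to scut^{op}(\chi(\Gamma^{<0})) \setminus \lbrace \chi(\Gamma^{<0})\rbrace$. At each stage I will analyze $\Delta_{\Gamma_\rho}$ using the preceding corollary applied to $\Gamma_\rho$ and the element $a$, and whenever a new element of $\chi_M(M^{<0}) \setminus \chi(\Gamma_\rho^{<0})$ appears I will invoke lemma \ref{case4} to adjoin the corresponding two-sided $\mathbb{Z}$-chain.

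More precisely, at a successor ordinal $\rho + 1$ either (i) $\Delta_{\Gamma_\rho} \subseteq \chi(\Gamma_\rho^{<0})$, in which case the recursion halts, or (ii) there is a unique $b_\rho \in \Delta_{\Gamma_\rho} \setminus \chi(\Gamma_\rho^{<0})$ realizing a nonempty special cut $B_\rho$ of $\chi(\Gamma_\rho^{<0})$. In case (ii), setting $b_{k,\rho} := \chi_M^k(b_\rho)$ for $k \in \mathbb{Z}$, lemma \ref{case4} produces $\Gamma_{\rho+1} \subseteq M$ obtained from $\Gamma_\rho$ by adjoining the chain $(b_{k,\rho})_{k \in \mathbb{Z}}$ in the cut bounded by $B_\rho$ below and $\chi(\Gamma_\rho^{<0}) \setminus B_\rho$ above. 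At limit ordinals take unions. The recursion halts at some $\delta \leq \omega$: if no finite stage halts it, then any $x + qa$ with $x \in \Gamma_\omega$ and $q \neq 0$ already has $x$ in some $\Gamma_n$, whence $\chi_M(x + qa) \in \chi(\Gamma_{n+1}^{<0}) \subseteq \chi(\Gamma_\omega^{<0})$, so the halting condition is met at $\omega$.

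At termination I split on whether $a \in \Gamma_\delta$. If so, $\delta$ must be a natural number, for if $\delta$ were a limit then $a$ would lie in some $\Gamma_{\delta'}$ with $\delta' < \delta$, but then $\Gamma_{\delta'} + \mathbb{Q}^{\neq 0}a \subseteq \Gamma_{\delta'}$ would force $\Delta_{\Gamma_{\delta'}} \subseteq \chi(\Gamma_{\delta'}^{<0})$, halting the recursion before $\delta$; in this subcase $\Gamma\langle a\rangle \cong \Gamma_f$, yielding $(1)$. Otherwise, the halting condition $\Delta_{\Gamma_\delta} \subseteq \chi(\Gamma_\delta^{<0})$ implies that $\Gamma_\delta + \mathbb{Q}a = \Gamma_\delta \oplus \mathbb{Q}a$ is already closed under $\chi_M$, so $\Gamma\langle a\rangle \cong \Gamma_f \oplus \mathbb{Q}a$ with $\delta$ a natural number (case $(2)$) or $\delta = \omega$ (case $(3)$). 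The isomorphism over $\Gamma$ is assembled stage by stage from the uniqueness clauses in lemma \ref{case4}.

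The main obstacle is verifying closure of the resulting structure under the full $L_{pdg*}$-signature, in particular under $\chi_M^{-1}$. The two-sided nature of the $\mathbb{Z}$-chains supplied by lemma \ref{case4} is precisely what handles this for the adjoined elements: each newly introduced $b_\rho$ automatically comes with all of its $\chi_M^{-k}$-preimages. At termination, the structural constraint on $\Delta_{\Gamma_\delta}$ provided by the corollary forces any $\chi_M^{-1}$-image of an element $x + qa \in \Gamma_\delta + \mathbb{Q}a$ to already lie in $\Gamma_\delta + \mathbb{Q}a$ or to equal $\infty$; checking this carefully, via the dichotomy of the corollary applied at stage $\delta$, is the key technical step that secures the identification of $\Gamma\langle a\rangle$ with the abstract construction.
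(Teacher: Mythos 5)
Your proposal is correct and follows essentially the same route as the paper: build an increasing chain $\Gamma_0=\Gamma\subseteq\Gamma_1\subseteq\cdots$ inside $\mathbb{M}$ by repeatedly inspecting $\Delta_{\Gamma_n}$ via the trichotomy corollary, adjoining a two-sided $\mathbb{Z}$-chain through Lemma \ref{case4} whenever a new element of $\chi(M^{<0})$ appears, and stopping at a finite stage or at $\omega$ to land in cases $(1)$, $(2)$ or $(3)$, with the isomorphism over $\Gamma$ coming from the uniqueness clauses. Your additional observations (why the process must halt by stage $\omega$, why $a\in\Gamma_\delta$ forces $\delta$ finite, and the need to check closure under $\chi^{-1}$) are refinements of details the paper leaves implicit, not a different argument.
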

\begin{proof} The main idea of the proof is to construct by induction a chain $\Gamma_0\subseteq \Gamma_1\subseteq \Gamma_2\subseteq...\subseteq \Gamma\langle a\rangle$ of models of $T_{pdg*}$ in the model $\mathbb{M}$, each one isomorphic to $\Gamma_f$ for some increasing function 
\[f:n\rightarrow scut^{op}(\Gamma)\setminus \lbrace \chi(\Gamma^{<0})\rbrace.\]
To do that, we put first $\Gamma_0=\Gamma$. Clearly, $\Gamma_0$ is isomorphic to $\Gamma_f$ for $f:0\rightarrow scut^{op}(\Gamma)\setminus \lbrace \chi(\Gamma^{<0})\rbrace$. Assume we have built $\Gamma_n$ with $n\in \mathbb{N}$ and $\Gamma_n\cong \Gamma_f$ for some increasing $f:n\rightarrow scut^{op}(\Gamma)\setminus \lbrace \chi(\Gamma^{<0})\rbrace$ then we have two possibilities: 
\begin{enumerate}
\item [$(1)$] $\Gamma_n=\Gamma\langle a\rangle$, and then $\Gamma\langle a\rangle\cong \Gamma_f$.
\item [$(2)$] $a\notin \Gamma_n$. Thus we consider the set $\Delta_{\Gamma_n}$ for $\Gamma_n$, and we have other two cases:
\begin{itemize}
\item $\Delta_{\Gamma_n}\subseteq \chi(\Gamma_n)$. Thus, we put $\Gamma_{n+1}=\Gamma_n\oplus \mathbb{Q}a$. So, $\Gamma\langle a\rangle\cong \Gamma_f=\Gamma_{n+1}$ and $\Gamma_f\cong \Gamma_f\oplus \mathbb{Q}a$.
\item $\chi(\Delta_{\Gamma_n})\setminus \Delta_{\Gamma}$. Here, $\Delta_{\Gamma_n}=B\cup\lbrace b\rbrace$ for some special cut $B\subseteq \chi(\Gamma_{n}^{<0})$ and $b\in \chi(M^{<0})\setminus \chi(\Gamma_{n}^{<0})$ with $B<b<(\chi(\Gamma_n^{<0}\setminus B$. Thus, we define $\Gamma_{n+1}$ as the model of $T_{pdg}$ given by lemma \ref{case4} by including the copy of $\mathbb{Z}$ corresponding to $b$. Thus, there is $g:n+1\rightarrow scut^{op}(\Gamma)\setminus \lbrace \chi(\Gamma^{<0})\rbrace$ such that $\Gamma_{n+1}\cong \Gamma_g$.
\end{itemize}

\end{enumerate}
Now, if $\Gamma\langle a\rangle=\Gamma_n$ for some $n$ we have finish. Otherwise, we put $\Gamma\langle a\rangle=\cup_{n} F_n\oplus \mathbb{Q}a$. By construction, $\Gamma\langle  a\rangle\cong \Gamma_{f}\oplus \mathbb{Q}a$ for some increasing $f:\omega\rightarrow scut^{op}(\Gamma)\setminus \lbrace \chi(\Gamma^{<0})\rbrace$.

\end{proof}

\begin{example}
\begin{enumerate}
\item [$(1)$] Let $(\oplus_{i}\mathbb{Q}e_i,\chi_{\mathbb{Q}})\subseteq (\Gamma_{\log},\chi)$ be the model of $T_{pdg}$ considered in the first example of section \ref{sectcd}, $r\in \mathbb{R}^{<0}\setminus \mathbb{Q}$   and $a=re_m\in \Gamma_{\log}\setminus \oplus_{i}\mathbb{Q}e_i$, for some $m$. Since for each $b+qa\in  (\Gamma_{\log}+\mathbb{Q}^{\neq 0}a)^{<0}$ the entry $m$ never is $0$, then 
\[\Delta_{\Gamma_{\log}}=\chi(\Gamma_{\log}+\mathbb{Q}^{\neq 0}a)^{<0}=\lbrace-e_i:2\leq i\leq m\rbrace \subseteq \chi(\Gamma_{\log}^{<0}).\]
Hence, 
\[(\oplus_{i}\mathbb{Q}e_i,\chi_{\mathbb{Q}})\langle a \rangle=(\oplus_{i}\mathbb{Q}e_i\oplus\mathbb{Q}a,\chi')\subseteq (\Gamma_{\log},\chi)\]
where $\chi'$ is the restriction of $\chi$ to $\oplus_{i}\mathbb{Q}e_i\oplus\mathbb{Q}$.
\item [$(2)$] Let $(\Gamma,\chi)$ be a model of $T_{pdg}$ and $(\Gamma_{f},\chi_f)$ be a fixed extension of $(\Gamma,\chi)$ for some increasing function 
\[f:n\rightarrow scut^{op}(\Gamma)\setminus \lbrace \chi(\Gamma^{<0})\rbrace\]
with  $n\geq 1$. Let's take one element $a_j\in (\Span_{\mathbb{Q}}(b_{k,j})_{k\in \mathbb{Z}})^{\neq 0}$ for each $j<n$, where $(b_{k,j})_{k\in \mathbb{Z}}$ are the elements of the $j$-th copy of $\mathbb{Z}$ added to $\Gamma$ in $\Gamma_{f}$. Given $c\in \Gamma$ we define the element
\[a=c+\sum\limits_{j=0}^{n-1}a_j\in \Gamma_f.\]
Thus, $\Gamma\langle a \rangle=\Gamma_f$.

\end{enumerate}
\end{example}


\noindent
\textbf{Acknowledgements} The author thank Lou van den Dries and Xavier Caicedo for the helpful remarks and suggestions. Particularly, this paper was written with the support of the research fund of the faculty of sciences at the Universidad de los Andes, within the framework of the "Convocatoria 2018-1 para la financiaci\'on de Proyectos de Investigaci\'on y Participaci\'on en Eventos Acad\'emicos categoria Estudiantes de Doctorado" and the project INV-2017-26-1141.

%
%

\textsc{Departamento de Matem\'aticas, Universidad de los Andes, Cra. 1. No. 18A-10, Bogotá, Colombia.}

\textit{E-mail address:} jl.angel76@uniandes.edu.co

\end{document}